\newtheorem{theorem}{Theorem}[section]
\newtheorem{lemma}[theorem]{Lemma}
\newtheorem{proposition}[theorem]{Proposition}
\newtheorem{corollary}[theorem]{Corollary}
\theoremstyle{definition}
\newtheorem{definition}[theorem]{Definition}
\theoremstyle{remark}
\newtheorem{remark}[theorem]{Remark}
\newtheorem{example}[theorem]{Example}
\newcommand{\QQ}{\mathbb Q}
\newcommand{\ZZ}{\mathbb Z}
\newcommand{\CC}{\mathbb C}
\newcommand{\RR}{\mathbb R}
\newcommand{\BB}{\mathbb B}
\newcommand{\EE}{\mathbb E}
\newcommand{\FF}{\mathbb F}
\newcommand{\PP}{\mathbb P}
\newcommand{\cP}{\mathcal P}
\newcommand{\im}{\operatorname{im}}
\newcommand{\id}{\operatorname{id}}
\newcommand{\tr}{\operatorname{tr}}
\newcommand{\ch}{\operatorname{ch}}
\newcommand{\st}{\operatorname{st}}
\newcommand{\Eq}{\operatorname{Eq}}
\newcommand{\Hom}{\operatorname{Hom}}
\newcommand{\Ext}{\operatorname{Ext}}
\newcommand{\End}{\operatorname{End}}
\newcommand{\Emb}{\operatorname{Emb}}
\newcommand{\supp}{\operatorname{supp}}
\newcommand{\cyl}{\operatorname{cyl}}
\newcommand{\Vect}{\operatorname{Vect}}
\newcommand{\Tri}{\operatorname{Tri}}
\newcommand{\Kom}{\operatorname{Kom}}
\newcommand{\Top}{\mathsf{Top}}
\newcommand{\Set}{\mathsf{Set}}
\newcommand{\Grpd}{\mathsf{Grpd}}
\newcommand{\Simp}{\mathsf{Simp}}
\newcommand{\s}{\mathsf s}
\newcommand{\C}{\mathsf C}
\newcommand{\D}{\mathsf D}
\newcommand{\op}{\mathsf{op}}
\newcommand{\Sm}{\mathsf{Sm}}
\newcommand{\even}{\mathrm{even}}
\newcommand{\sm}{\mathrm{sm}}
\newcommand{\per}{\mathrm{per}}
\newcommand{\kom}{\mathrm{kom}}
\newcommand{\acts}{\curvearrowright}
\newcommand{\righttwoarrows}{\mathrel{\vcenter{\mathsurround0pt{\ialign{##\crcr\noalign{\nointerlineskip}$\rightarrow$\crcr\noalign{\nointerlineskip}$\rightarrow$\crcr}}}}}
\newcommand{\rightthreearrows}{\mathrel{\vcenter{\mathsurround0pt{\ialign{##\crcr\noalign{\nointerlineskip}$\rightarrow$\crcr\noalign{\nointerlineskip}$\rightarrow$\crcr\noalign{\nointerlineskip}$\rightarrow$\crcr}}}}}
\begin{document}

\title{Enough vector bundles on orbispaces}

\author{John Pardon}

\date{10 December 2021}

\maketitle

\begin{abstract}
We show that every orbispace satisfying certain mild hypotheses has `enough' vector bundles.
It follows that the $K$-theory of finite rank vector bundles on such orbispaces is a cohomology theory.
Global presentation results for smooth orbifolds and derived smooth orbifolds also follow.
\end{abstract}

\section{Introduction}

A \emph{(separated) orbispace} \cite{haefligerorbiespaces,behrendintroduction} is a topological stack $X$ which admits a cover by open substacks of the form $Y/\Gamma$ (where $\Gamma\acts Y$ is a continuous action of a finite group on a topological space) and whose diagonal $X\to X\times X$ is proper (see \S\ref{stackssec} for background on topological stacks).
Familiar examples of orbispaces include orbifolds, graphs of groups, complexes of groups, and (the analytifications of) separated Deligne--Mumford algebraic stacks over $\CC$.

An interesting question to ask about a given orbispace $X$ is whether there exists a global presentation $X=Y/G$ for $G$ a compact Lie group; let us call such an orbispace a \emph{global quotient}.
There are a number of known conditions which imply an orbispace is a global quotient.
If $X=Y/\Gamma$ for a (possibly infinite) discrete group $\Gamma$, then $X$ is a global quotient by L\"uck--Oliver \cite[Corollary 2.7]{luckoliver}.
Every (paracompact) smooth \emph{effective} $n$-dimensional orbifold is a global quotient (of its orthonormal frame bundle by $O(n)$), and it is an old question whether every (not necessarily effective) smooth orbifold is a global quotient.
A sufficient criterion for an orbifold to be a global quotient was given by Henriques--Metzler \cite{henriquesmetzler}.
Henriques \cite{henriquesthesis} conjectured that every compact orbispace is a global quotient, however other experts have expressed skepticism that such a general result would be true \cite[\S 6.4]{siebertgw}.
The analogous question for algebraic stacks has been studied by Edidin--Hassett--Kresch--Vistoli \cite{ehkv} and Totaro \cite{totaro}.
It is a result of Kresch--Vistoli \cite[Theorem 2]{kreschvistoli} \cite[Theorem 4.4]{kresch} (using a result of Gabber \cite{dejonggabber} \cite[Chapter 4]{colliottheleneskorobogatov}) that smooth separated Deligne--Mumford stacks over $\CC$ with quasi-projective coarse moduli space are global quotients.

Our main result implies that all orbispaces satisfying very mild hypotheses are global quotients.
In particular, all compact orbispaces are global quotients.

\begin{theorem}\label{maingeneral}
Let $X$ be an orbispace, with isotropy groups of bounded order, whose coarse space $\left|X\right|$ is coarsely finite-dimensional (every open cover has a locally finite refinement with finite-dimensional nerve).
Then there exists a complex vector bundle $V$ of rank $n>0$ over $X$, whose fiber over every $x\in X$ is isomorphic to a direct sum of copies of the regular representation of the isotropy group $G_x$.
We may take $n=n(d,m)$ if $\left|X\right|$ is $d$-dimensional (every open cover has a locally finite refinement with nerve of dimension $\leq d$) and has isotropy groups of order $\leq m$.
\end{theorem}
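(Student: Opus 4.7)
The plan is to construct the bundle by combining local ``regular representation'' bundles, using an inductive procedure on the nerve of a suitable orbispace chart cover.

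First I would establish the local picture. On any orbispace chart $U\cong Y/\Gamma$, the trivial $\Gamma$-equivariant bundle $Y\times\CC[\Gamma]$ descends to a rank-$|\Gamma|$ vector bundle $V_U$ on $U$. At a point $x\in U$ with stabilizer $G_x\leq\Gamma$, the fiber is $\CC[\Gamma]$ viewed as a $G_x$-module via left multiplication, which decomposes as $[\Gamma:G_x]$ copies of the regular representation of $G_x$. Hence the desired bundle exists locally, and the key structural observation is that the property ``each fiber is a sum of copies of the regular representation of the stabilizer'' is preserved under direct sums. This gives the slack to freely enlarge bundles during a globalization procedure.

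Next, using the coarsely finite-dimensional hypothesis, I would fix a locally finite open cover $\{U_\alpha\}$ of $X$ by orbispace charts $U_\alpha\cong Y_\alpha/\Gamma_\alpha$ whose nerve has dimension $\leq d$; the bounded-order hypothesis ensures $|\Gamma_\alpha|\leq m$ may be arranged. The naive guess $\bigoplus_\alpha V_{U_\alpha}$ fails because each $V_{U_\alpha}$ lives only on $U_\alpha$ and vector bundles cannot be extended by zero. Instead I would induct on the skeleton filtration of the nerve: assuming a bundle with the regular-representation property has been constructed over the preimage of the $k$-skeleton, extend it across the $(k+1)$-simplices by taking direct sums with fresh copies of the local $V_{U_\alpha}$'s and patching via a partition of unity subordinate to the nerve. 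Because $\dim \leq d$, the induction terminates in finitely many steps, and the total rank is bounded by an explicit function $n(d,m)$.

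The main obstacle is the extension step: to pass from the preimage of the $k$-skeleton to the preimage of the $(k+1)$-skeleton one must produce continuous equivariant isomorphisms between direct sums of restricted regular representations as one crosses the boundary of a chart, since over $U_\alpha\cap U_\beta$ the two local bundles carry \emph{different} groups $\Gamma_\alpha$ and $\Gamma_\beta$ in the fiber data. The strategy is to first stabilize by taking enough copies so that both sides restrict, as $G_x$-modules, to the same multiple of the regular representation of each isotropy group $G_x$ occurring along the overlap; once ranks are matched in this way, the space of equivariant isomorphisms becomes nonempty with contractible components, so a section exists over the skeleton by obstruction theory. The bounded-order hypothesis limits the finitely many isotropy types that must be simultaneously balanced, and the bounded dimension $d$ bounds the number of inductive steps and hence the final rank. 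I expect the delicate bookkeeping of these multiplicities, together with verifying that the obstructions genuinely vanish on a $d$-dimensional nerve, to be the central technical challenge.
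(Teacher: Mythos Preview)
Your overall framework---reduce to a finite-dimensional nerve, then do obstruction theory skeleton by skeleton---is exactly the shape of the paper's argument. The paper first maps $X$ to (the realization of) a simplicial complex of finite groups via Proposition~\ref{maptosimplicial}, and then proves the result for such complexes (Theorem~\ref{mainsimplicial}) by extending the bundle cell by cell over attachments of the form $(D^k,\partial D^k)\times\BB G$.

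The genuine gap is your assertion that, once fiberwise multiplicities are matched, ``the space of equivariant isomorphisms becomes nonempty with contractible components, so a section exists over the skeleton by obstruction theory.'' This is false. If $V$ and $W$ over $\partial D^k\times\BB G$ both restrict at each point to the same multiple of the regular representation, the bundle of $G$-equivariant isomorphisms has fiber $\prod_{\rho\in\hat G}GL(n_\rho,\CC)$, which is connected but \emph{not} contractible: its homotopy groups are those of $\prod_\rho U(n_\rho)$, and by Bott periodicity $\pi_{2i-1}U(n)\cong\ZZ$ in the stable range. Equivalently, the obstruction to extending $V$ from $\partial D^k\times\BB G$ to $D^k\times\BB G$ lies in $\prod_{\rho\in\hat G}\pi_{k-1}(BU(n_\rho))$, which is nonzero for even $k$. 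Matching ranks makes the obstruction group well defined; it does not make the obstruction vanish.

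What the paper does to overcome this is the heart of the argument, and it is absent from your proposal. One constructs not merely $V$ but a triple $(V,\theta,\gamma)$ where $\theta$ is a connection and $\gamma$ is an explicit primitive for the \emph{inertial Chern character form} $\ch^I(V,\theta)$ on the inertia stack $IX$ (minus its degree-zero part $n\chi_1$). Since the Chern character detects $\pi_*(BU)$ rationally, the trivialization $\gamma$ forces the extension obstruction for $V$ to vanish. One is then left with the obstruction to extending $\gamma$, which lives in $H^\even(D^k\times G/G,\partial D^k\times G/G;\CC)$; the paper shows---via a splitting principle on $IX$ and an explicit computation on $\BB U(1)$---that this secondary obstruction is rational with denominators bounded in terms of the degree, hence can be killed by replacing $V$ with $V^{\oplus a}$ for a single $a$ depending only on $d$ and $m$. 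This last step is what produces the uniform bound $n=n(d,m)$ and is where ``taking enough copies'' actually enters; it is not a rank-balancing issue but a denominator-clearing one.
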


(Note that the `real' and `complex' versions of Theorem \ref{maingeneral} are equivalent, by tensoring from $\RR$ to $\CC$ and by forgetting from $\CC$ to $\RR$.)

The proof of Theorem \ref{maingeneral} is split into two parts.
In \S\ref{mainsection}, we prove Theorem \ref{maingeneral} for any orbispace presented by a simplicial complex of groups; this special case carries the essential topological content of the result.
Due to one particular step in this proof, we have no explicit bound on the rank $n(d,m)$ of the vector bundles proven to exist.
In \S\ref{covernerve}, we deduce Theorem \ref{maingeneral} in general by showing that every paracompact orbispace admits a representable map to a simplicial complex of groups.
This result (Proposition \ref{maptosimplicial}) is likely of independent interest; it is the analogue of mapping a paracompact Hausdorff space to the nerve of an open covering using a partition of unity (it is thus worthy of note that its proof is not trivial).

The following immediate corollaries of Theorem \ref{maingeneral} are derived in \S\ref{vectorprincipal}.

\begin{corollary}\label{maingeneralprincipal}
For $X$ as in Theorem \ref{maingeneral}, we have $X=P/U(n)$ for a space $P$.
\end{corollary}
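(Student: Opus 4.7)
The plan is to take $P$ to be the bundle of unitary frames of the vector bundle $V$ produced by Theorem~\ref{maingeneral}. After choosing a Hermitian metric on $V$ (which exists by a partition-of-unity argument on $X$, since the space of Hermitian metrics on $V$ is a contractible fiber bundle over $X$), I would define $P = \operatorname{Fr}_U(V)$ as the topological stack over $X$ whose $T$-points over $f\colon T\to X$ are Hermitian isomorphisms $\underline{\CC}^n \xrightarrow{\sim} f^*V$. Right composition makes this a principal $U(n)$-bundle in the stacky sense, so $P/U(n)=X$ tautologically.

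The real content of the corollary is that $P$ is representable by an honest topological space, not merely a stack. Working locally in a chart $Y/\Gamma$ of $X$ in which $V$ comes from a $\Gamma$-equivariant Hermitian bundle $V_Y\to Y$, one has $P = \operatorname{Fr}_U(V_Y)/\Gamma$, where $\operatorname{Fr}_U(V_Y)\to Y$ is the classical principal $U(n)$-bundle of unitary frames. This quotient is representable by a space precisely when the $\Gamma$-action on $\operatorname{Fr}_U(V_Y)$ is free, which amounts to the claim that for every $y\in Y$ the stabilizer $\Gamma_y = G_x$ acts \emph{faithfully} on the fiber $V_y = V_x$: for a unitary frame $\varphi\colon\CC^n \xrightarrow{\sim} V_x$, the relation $g\cdot\varphi=\varphi$ holds iff $g$ acts as the identity on $V_x$.

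Faithfulness is exactly the fiberwise output of Theorem~\ref{maingeneral}: $V_x$ is a direct sum of copies of the regular representation of $G_x$, and the regular representation is faithful (for $g\ne 1$, left multiplication on $\CC[G_x]$ permutes the standard basis without fixed points, hence acts nontrivially). Thus $\Gamma$ acts freely on $\operatorname{Fr}_U(V_Y)$, the global frame bundle $P$ is a topological space, and the projection $P\to X$ exhibits $X = P/U(n)$. The only nontrivial step is the invocation of Theorem~\ref{maingeneral} itself; once $V$ is available, the remaining work is routine bookkeeping about gluing the local frame bundles and verifying that the resulting $U(n)$-action has the expected quotient, with no substantive obstacle beyond care with the stacky language.
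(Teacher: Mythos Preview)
Your proposal is correct and follows essentially the same approach as the paper: take the frame bundle $P$ of the vector bundle $V$ from Theorem~\ref{maingeneral} (after choosing a Hermitian metric via partition of unity), and use faithfulness of the regular representation to conclude that $P$ has trivial isotropy and is therefore an honest space with $X=P/U(n)$. The paper organizes the verification that $P$ is a space slightly differently---it checks directly that $P$ is an orbispace (the diagonal $P\to P\times P$ is separated and proper, factoring through $P\times_XP$) and then invokes Corollary~\ref{criterionrepresentability}---whereas you argue representability chart-by-chart via freeness of the $\Gamma$-action on $\operatorname{Fr}_U(V_Y)$; but this is a difference of bookkeeping, not of idea.
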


\begin{corollary}\label{orbifoldpresentation}
Every paracompact smooth orbifold $X$ of dimension $\leq d$ with isotropy groups of order $\leq m$ is the quotient $X=P/U(n)$ of a smooth manifold $P$ by a smooth action of the compact Lie group $U(n)$, where $n=n(d,m)$.
\end{corollary}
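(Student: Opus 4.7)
The plan is to deduce the corollary from Theorem \ref{maingeneral} by passing from the resulting vector bundle to its unitary frame bundle. A paracompact smooth orbifold $X$ of dimension $\leq d$ has paracompact coarse space of covering dimension $\leq d$, hence $|X|$ is $d$-dimensional in the sense of Theorem \ref{maingeneral}, and its isotropy groups have order $\leq m$ by hypothesis. The theorem therefore supplies a complex vector bundle $V$ of rank $n=n(d,m)$ over $X$ whose fiber at each $x$ is a direct sum of copies of the regular representation of $G_x$. The candidate for $P$ is the unitary frame bundle $\operatorname{Fr}_{U(n)}(V)$; it carries a tautologous free right $U(n)$-action with quotient $X$, so everything reduces to showing that $P$ is a \emph{smooth} manifold.

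The first thing I would arrange is that $V$ may be taken to be smooth rather than merely continuous. One route is to run the construction of Theorem \ref{maingeneral} in the smooth category on a smooth orbifold, using smooth equivariant partitions of unity to patch the local representations. A parallel route is to invoke a smoothing result: on a paracompact smooth orbifold every continuous Hermitian bundle is isomorphic to a smooth one (by averaging a continuous homotopy against an invariant smooth metric), and the isomorphism class of the fiber representation of $G_x$ is a locally constant invariant of $x$, hence preserved under any such homotopy.

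With $V$ smooth, I would check that $P=\operatorname{Fr}_{U(n)}(V)$ is a smooth manifold by working locally. Each chart of $X$ has the form $U/\Gamma$ with $\Gamma$ a finite group acting smoothly on a manifold $U$, and $V$ is represented there by a $\Gamma$-equivariant smooth Hermitian bundle $\tilde V\to U$. The ordinary unitary frame bundle $\operatorname{Fr}_{U(n)}(\tilde V)\to U$ is then a smooth principal $U(n)$-bundle on which $\Gamma$ acts smoothly, commuting with $U(n)$. Because the fiber of $\tilde V$ at each $u$ contains the regular representation of the stabilizer $\Gamma_u$, and the regular representation is faithful, $\Gamma_u$ acts faithfully on $\tilde V_u$; hence $\Gamma$ acts freely on frames, since any element fixing a frame fixes a basis and therefore the whole fiber. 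Thus the local quotient $\operatorname{Fr}_{U(n)}(\tilde V)/\Gamma$ is a smooth manifold, and these patches glue into the desired smooth manifold $P$ with a free smooth $U(n)$-action and $P/U(n)=X$.

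The only genuinely nontrivial step is upgrading the continuous vector bundle produced by Theorem \ref{maingeneral} to a smooth one. Once smoothness is in hand, the rest of the argument is driven by the single observation that the regular representation is faithful, so that frames of $V$ carry a free isotropy action and their bundle descends to a smooth manifold.
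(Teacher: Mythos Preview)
Your proposal is correct and follows essentially the same route as the paper: apply Theorem \ref{maingeneral}, upgrade the resulting continuous vector bundle to a smooth one, and then pass to the unitary frame bundle, whose total space has trivial isotropy because the regular representation is faithful. The paper carries out the smoothing step explicitly (first producing local smooth structures via averaging a trivialization at a fixed point, then patching over pairs of opens by approximating the continuous transition isomorphism by a smooth one using a smooth partition of unity), whereas you only sketch this; but you correctly isolate it as the one nontrivial ingredient.
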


It seems that Theorem \ref{maingeneral} does not help resolve the question of whether every separated Deligne--Mumford stack of finite type over $\CC$ is a global quotient; the question of whether the vector bundles produced by Theorem \ref{maingeneral} on its analytification are algebraic (or even analytic) seems difficult.

In \S\ref{ktheory}, we prove the following corollary of Theorem \ref{maingeneral}, to which the titular phrase `having enough vector bundles' refers.

\begin{corollary}\label{enoughvectorbundles}
Let $X\to Y$ be a representable map of orbispaces satisfying the hypothesis of Theorem \ref{maingeneral}.
Every vector bundle on $X$ of bounded rank embeds into the pullback of a vector bundle of bounded rank on $Y$.
\end{corollary}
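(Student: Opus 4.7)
The plan is to use Theorem~\ref{maingeneral} to produce a bundle $W$ on $Y$ whose pullback to $X$ is fiberwise ``fat enough'' to contain every $G_x$-representation, then use the rational uniqueness of such bundles (the Remark) to match $f^*W$ against a bundle obtained by applying Theorem~\ref{maingeneral} directly to $X$, and finally globalize a fiberwise split embedding.

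First, I would apply Theorem~\ref{maingeneral} to $Y$ to obtain a bundle $W$ on $Y$ of bounded rank with fibers sums of regular representations and trivial inertial Chern character. Since $f\colon X\to Y$ is representable, the isotropy maps $G_x\hookrightarrow G_{f(x)}$ inject, and the restriction $\mathrm{reg}(G_{f(x)})|_{G_x}=[G_{f(x)}:G_x]\cdot\mathrm{reg}(G_x)$ remains a sum of regulars; by functoriality of the inertial Chern character along $If\colon IX\to IY$, the pullback $f^*W$ on $X$ inherits both properties. Next, I would apply Theorem~\ref{maingeneral} to $X$ itself to obtain another such bundle $V_X$. Both $f^*W$ and $V_X$ satisfy the hypotheses of the Remark's rational uniqueness, so $V_X^{\oplus a}\cong(f^*W)^{\oplus b}$ for some $a,b>0$, exhibiting $V_X$ as a direct summand of the pullback $(f^*W)^{\oplus b}$.

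Given any bundle $V$ on $X$ of bounded rank, I would then realize $V$ as a direct summand of $V_X^{\oplus M}$ for some uniformly bounded $M$. Fiberwise this is immediate: $V_X^{\oplus M}|_x\cong (Mn_X/|G_x|)\cdot\mathrm{reg}(G_x)$ contains $V_x$ as a $G_x$-summand once $M\geq \rank(V)\cdot|G_x|/n_X$, which is bounded uniformly in $x$ because $|G_x|\leq m$ and $n_X$ is fixed. To upgrade this pointwise inclusion of $G_x$-representations to a globally defined split morphism of vector bundles $V\hookrightarrow V_X^{\oplus M}$, I would invoke the same obstruction-theoretic framework that underlies the proof of Theorem~\ref{maingeneral}: the obstructions to globalizing such a split embedding into a target with trivial inertial Chern character should be torsion, hence vanish after replacing $V_X^{\oplus M}$ by $V_X^{\oplus Ma}$ for a sufficiently divisible $a>0$. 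Composing with the summand inclusion $V_X\subset (f^*W)^{\oplus b}$ from the previous step then realizes $V$ as a direct summand of $(f^*W)^{\oplus Mab}$, a bundle of bounded rank pulled back from $Y$.

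The main obstacle is the final globalization step: reformulating the ``fiberwise split-embedding into a target of trivial-inertial-Chern-character type'' problem as a cohomological extension problem whose obstructions are governed by the same torsion phenomena established in the proof of Theorem~\ref{maingeneral}. I expect this is essentially a relative version of that theorem --- with the role of the ``extending bundle'' played by the cokernel-to-be of $V\hookrightarrow V_X^{\oplus M}$ --- but formalizing the precise obstruction calculus requires some care.
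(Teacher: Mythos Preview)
Your proposal has a genuine gap at exactly the point you flag as ``the main obstacle'': the globalization of the fiberwise split embedding $V\hookrightarrow V_X^{\oplus M}$. You suggest reusing the obstruction-theoretic machinery from the proof of Theorem~\ref{maingeneral}, but that machinery is set up specifically for extending a single bundle cell-by-cell over a simplicial complex of groups, with the inertial Chern character controlling the obstruction. Here the problem is different: you need to globalize a \emph{morphism} $V\to V_X^{\oplus M}$ (with prescribed fiberwise behavior) over a general orbispace, and there is no evident differential-form gadget playing the role of $\gamma$ for morphisms. Your sketch that ``the cokernel-to-be'' plays the role of the extending bundle is not a reduction---the cokernel is not given in advance, and the obstructions to producing an injective morphism into a fixed target are not the same as the obstructions to extending a bundle. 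Notably, your argument never invokes the coarse finite-dimensionality of $\left|X\right|$, which is a warning sign: some bound on the nerve is exactly what one needs to control the rank of the target in any such globalization.

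The paper's proof is far more elementary and bypasses both the rational-uniqueness detour and any obstruction theory. One applies Theorem~\ref{maingeneral} only to $Y$, obtaining $F$; since $f^*F$ has regular-representation fibers, for each $x\in X$ one can write down (on a chart $U/G$ with $G$ fixing a lift of $x$) a $G$-equivariant injection $E\to f^*F^{\oplus N}$ at that point, then average and multiply by a cutoff to get a global map $E\to f^*F^{\oplus N}$ injective near $x$. One then takes an open cover $\{U_i\}$ with such maps $f_i$, refines so the nerve has dimension $\leq d$, and uses the standard partition-of-unity trick (the open sets $V_I$ indexed by simplices of the nerve, with $V_I\cap V_J=\varnothing$ unless $I\subseteq J$ or $J\subseteq I$) to place the $f_i$ into distinct summands of $f^*F^{\oplus(d+1)N}$, yielding a single globally injective map. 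Splitting then comes from paracompactness. This is where the finite-dimensionality hypothesis actually enters, and it replaces your entire obstruction argument with two paragraphs of soft topology.
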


It is well known that Corollary \ref{enoughvectorbundles} is the key statement needed to show that the $K$-theory of finite rank vector bundles on orbispaces satisfies excision and exactness and is thus a cohomology theory (see \cite[\S 3]{luckoliver} and \cite[\S 6.3]{henriquesthesis}).
We elaborate on this assertion in \S\ref{ktheory}, using the suggestive reformulation of Corollary \ref{enoughvectorbundles} as the statement that pullback of vector bundles is \emph{cofinal}.
The $K$-theory of finite rank vector bundles should agree (for reasonable orbispaces) with the other standard models of $K$-theory for orbispaces, such as using bundles of Fredholm operators \cite{segal,matumoto,atiyahalgebraic,atiyahsinger} or using orthogonal spectra \cite[\S\S 6.3--6.4]{schwedeglobal} (compare Remark \ref{globalcomparison} below).

\begin{remark}
Another known (to experts) consequence of Corollary \ref{enoughvectorbundles} (which we do not explain in detail) is that every paracompact quasi-smooth derived smooth orbifold with tangent and obstruction spaces of dimension $\leq d$ and isotropy groups of order $\leq m$ is the derived zero set of a smooth section of a vector bundle of rank $\leq n$ over a smooth orbifold of dimension $\leq n=n(d,m)$.
(`Quasi-smooth' means locally isomorphic to the derived zero set of a smooth section of a smooth vector bundle over a smooth orbifold.)
\end{remark}

\begin{remark}\label{globalcomparison}
Combined work of Schwede \cite{schwedeequivalence} and Gepner--Henriques \cite{gepnerhenriques} establishes an equivalence between orthogonal spaces up to global equivalence (with respect to the `global family' of all finite groups) and certain categories of `cellular' topological stacks (which include, but are more general than, what we call orbispaces here).
The vector bundles produced by Theorem \ref{maingeneral} allow for a concrete description of the functor from orbispaces to orthogonal spaces (compare \cite[Definition 1.1.27]{schwedeglobal}).
Let $X$ be an orbispace, and let $E$ be any \emph{faithful} vector bundle over $X$ (meaning the fibers of $E$ are faithful representations of the isotropy groups of $X$).
The orthogonal space corresponding to $X$ is given by
\begin{equation*}
V\mapsto\Emb_X(E,\underline V)
\end{equation*}
where $\Emb_X(E,\underline V)$ denotes the total space of the bundle of embeddings of $E$ into $V$ (note that $\Emb_X(E,\underline V)$ is a space since $E$ is faithful).

Schwede \cite{schwedeglobal} also associates to every orthogonal spectrum a cohomology theory on orthogonal spaces, hence on orbispaces.
Given an orthogonal spectrum $A$ and an orbispace $X$ which admits faithful vector bundles, the degree zero $A$-cohomology of $X$ is (in view of the above) given by the direct limit over vector bundles $E$ over $X$ of the set of homotopy classes of sections of the fibration $\Omega^EA(E)\to X$.
More generally, we may consider the mapping spectrum $F(X,A)$ defined by
\begin{equation*}
n\mapsto\varinjlim_{E/X}\Gamma(X,\Omega^EA(E\oplus\RR^n))
\end{equation*}
whose stable homotopy groups are the $A$-cohomology groups of $X$.
If $A$ is a global $\Omega$-spectrum \cite[Definition 4.3.8]{schwedeglobal}, then this direct limit is achieved at any faithful $E$, and the above definition of $F(X,A)$ is an $\Omega$-spectrum.
Let us also propose a possible definition of the $A$-homology groups of $X$ as the stable homotopy groups of the spectrum
\begin{equation*}
n\mapsto\varinjlim_{E/X}\left|\Omega^EA(E\oplus\RR^n)\right|
\end{equation*}
where $\left|\cdot\right|$ indicates taking the coarse space of the total space of $\Omega^EA(E\oplus\RR^n)$ over $X$.
\end{remark}

\begin{remark}
It is natural to ask to what extent Theorem \ref{maingeneral} may be generalized to the case of `Lie orbispaces' (topological stacks locally modelled on $Y/G$ for $G$ a compact Lie group).
The naive generalization is simply false: there are purely ineffective Lie orbispaces with isotropy group $S^1$ and coarse space $S^3$ which have no finite rank faithful vector bundles \cite[\S 2]{totaro}.
It is, however, reasonable to conjecture that the proof of Theorem \ref{maingeneral} could be generalized to prove that the existence of enough vector bundles on a Lie orbispace is a purely cohomological question.
\end{remark}

\subsection{Acknowledgements}

I would like to thank Andr\'e Henriques for discussions about orbispaces and topological stacks, David Treumann for conversations about the algebraic case, and the referees for their remarks.
I also thank the Department of Mathematics at Cambridge University for their hospitality during the time when this paper was conceived.
This research was conducted during the period the author served as a Clay Research Fellow and was partially supported by a Packard Fellowship and by the National Science Foundation under the Alan T.\ Waterman Award, Grant No.\ 1747553.

\section{The main construction}\label{mainsection}

This section is devoted to proving Theorem \ref{maingeneral} for orbispaces which are presented by a simplicial complex of groups.
This special case (stated as Theorem \ref{mainsimplicial} below) carries the essential topological content of Theorem \ref{maingeneral}.
For general background on orbispaces and topological stacks, we refer the reader to \S\ref{stackssec}.

\medskip

We begin by describing the basic idea of the proof, which we then implement in detail.
Our orbispace $X$ comes with a filtration by skeleta
\begin{equation*}
\varnothing=X_{-1}\subseteq X_0\subseteq X_1\subseteq\cdots
\end{equation*}
where $X_k$ is obtained from $X_{k-1}$ by attaching cells of the form $(D^k,\partial D^k)\times\BB G$ for finite groups $G$ (here $\BB G=*/G$ is the orbispace quotient).
We will construct the desired vector bundle by induction on skeleta.

A direct implementation of this strategy runs immediately into the following obstruction.
A (complex) vector bundle $V$ on $\partial D^k\times\BB G$ decomposes canonically as $V=\bigoplus_{\rho\in\hat G}V_\rho\otimes\rho$ for vector bundles $V_\rho$ on $\partial D^k$ indexed by the complex irreducible representations $\rho$ of $G$.
Thus $V$ is classified by an element of $\prod_{\rho\in\hat G}\pi_{k-1}(\coprod_nBU(n))$, which vanishes iff $V$ extends to $D^k\times\BB G$.
We shall seek to detect these obstructions using Chern characters.
According to Bott periodicity, the homotopy groups of $BU$ are given by
\begin{equation*}
\pi_i(BU)=\begin{cases}\ZZ&i\text{ even,}\\0&i\text{ odd,}\end{cases}
\end{equation*}
and the Chern character of a generator of $\pi_{2i}(BU)$ is nonzero.
Now the Chern character of $V=\bigoplus_{\rho\in\hat G}V_\rho\otimes\rho$ is given by $\ch(V)=\sum_{\rho\in\hat G}\ch(V_\rho)\dim\rho$, so triviality of $\ch(V)$ does not imply triviality of each $\ch(V_\rho)$ or of the aforementioned obstructions to extending $V$.

To capture the information we need about $V$, we consider the more refined characteristic class which we will call the \emph{inertial Chern character}\footnote{It is tempting to call it the ``Chern character character''.} $\ch^I$ (studied previously by Adem--Ruan \cite{ademruan}), which is a cohomology class on the \emph{inertia stack}
\begin{equation*}
IX:=\Eq\bigl(X\righttwoarrows X\bigr)=X\times_{X\times X}X.
\end{equation*}
In local coordinates $X=Y/G$, we have $IX=\bigl(\bigsqcup_{g\in G}Y^g\bigr)/G$.
In such coordinates, the inertial Chern character is defined by recalling the Chern--Weil description of the usual Chern character $\ch(V)=\tr\exp(\Omega/(-2\pi i))$, for $\Omega$ the curvature of a hermitian connection on $V$, and writing
\begin{equation*}
\ch^I(V)=\tr\Bigl[g\exp\Bigl(\frac\Omega{-2\pi i}\Bigr)\Bigr].
\end{equation*}
Now the inertia stack of $\partial D^k\times\BB G$ is $\partial D^k\times G/G$ (quotient by the conjugation action), and the inertial Chern character of $V=\bigoplus_{\rho\in\hat G}V_\rho\otimes\rho$ is given by $\ch^I(V)=\sum_{\rho\in\hat G}\ch(V_\rho)\chi_\rho$.
Thus if $\ch^I(V)$ is trivial, then so is each $\ch(V_\rho)$ by linear independence of characters, and hence $V$ extends to $D^k\times\BB G$ as desired.

We are thus led to consider the modified problem of constructing a vector bundle on $X$ with the desired fibers and with trivial inertial Chern character.
It suffices to show that if $X_{k-1}$ admits such a vector bundle, then so does $X_k$.
The vanishing of $\ch^I(V)\in H^*(IX_{k-1})$ guarantees that the obstructions to extending $V$ to $X_k$ vanish, by the above discussion.
It thus remains to show that this extension can be taken to have trivial inertial Chern character.
The inertial Chern character of any extension is an element of $\ker(H^*(IX_k)\to H^*(IX_{k-1}))$.
By the long exact sequence
\begin{equation*}
\cdots\to H^*(IX_k,IX_{k-1})\to H^*(IX_k)\to H^*(IX_{k-1})\to\cdots
\end{equation*}
it is thus in the image of $H^*(IX_k,IX_{k-1})$.
The inertial Chern character is an even degree class, so we may assume that $k$ is even.
By modifying how we extend $V$ from $X_{k-1}$ to $X_k$ by an element of $\prod_{\rho\in\hat G}\pi_k(BU)$ over a given $k$-simplex, we can shift its inertial Chern character by any integral linear combination of characters in $\Hom(G/G,\CC)\subseteq H^*(IX_k,IX_{k-1})$ (for $G$ the isotropy group of that given $k$-simplex).
By replacing $V$ with $V^{\oplus a}$, we can multiply its inertial Chern character by any positive integer $a$.
A combination of these two operations suffice to kill the inertial Chern character, provided that it is \emph{$\chi$-rational} (rational with respect to a certain $\QQ$-structure on $H^*(IX)$ differing from the usual one).
This rationality is not at all obvious given the transcendental definition we give of the inertial Chern character, but it is true, and thus the proof is completed.

\medskip

The remainder of this section is devoted to making the above outline precise.
We begin by recalling the definition of a simplicial complex of groups (see also Haefliger \cite{haefliger}, Corson \cite{corson}, or Bridson--Haefliger \cite{bridsonhaefliger}).

\begin{definition}
A \emph{simplicial complex of groups} is a pair $(Z,G)$ consisting of a simplicial complex $Z$ together with the following data:
\begin{itemize}
\item For every simplex $\sigma\subseteq Z$, a group $G_\sigma$.
\item For every pair of simplices $\sigma\subseteq\tau$, an \emph{injective} group homomorphism $G_\tau\hookrightarrow G_\sigma$.
\item For every triple of simplices $\rho\subseteq\sigma\subseteq\tau$, an element of $G_\rho$ conjugating the inclusion $G_\tau\hookrightarrow G_\rho$ to the composition of inclusions $G_\tau\hookrightarrow G_\sigma\hookrightarrow G_\rho$.
\item For every quadruple of simplices $\pi\subseteq\rho\subseteq\sigma\subseteq\tau$, the resulting product of elements of $G_\pi$ conjugating $G_\tau\hookrightarrow G_\pi$ to $G_\tau\hookrightarrow G_\rho\hookrightarrow G_\pi$ to $G_\tau\hookrightarrow G_\sigma\hookrightarrow G_\rho\hookrightarrow G_\pi$ to $G_\tau\hookrightarrow G_\sigma\hookrightarrow G_\pi$ and back to $G_\tau\hookrightarrow G_\pi$ must be the identity element of $G_\pi$.
\end{itemize}
Injectivity of each map $G_\tau\hookrightarrow G_\sigma$ will ensure that the geometric realization of $(Z,G)$ is an orbispace, rather than some sort of more exotic topological stack.
\end{definition}

A simplicial complex of groups $(Z,G)$ presents an orbispace $\left\|(Z,G)\right\|$ called its \emph{geometric realization}.
A precise definition of this geometric realization is given in \S\ref{covernerve}.
For now, it will suffice to know that the coarse space of $\left\|(Z,G)\right\|$ is (the geometric realization of) $Z$ itself, and that over the open star $\st(\sigma)\subseteq Z$ of a simplex $\sigma\subseteq Z$, the geometric realization $\left\|(Z,G)\right\|$ is given by the orbispace quotient
\begin{equation*}
\biggl(\bigsqcup_{\tau\supseteq\sigma}\tau\times(G_\tau\backslash G_\sigma)\biggr)\biggm/G_\sigma
\end{equation*}
where the pieces for $\tau\supseteq\tau'\supseteq\sigma$ are glued together via the map $G_\tau\backslash G_\sigma\to G_{\tau'}\backslash G_\sigma$ induced by the map $G_\tau\to G_{\tau'}$ and the element of $G_\sigma$ conjugating the composition $G_\tau\to G_{\tau'}\to G_\sigma$ to the map $G_\tau\to G_\sigma$.

We will often abuse terminology and say `simplicial complex of groups' when we really mean its geometric realization.
Thus we will refer to $X=\left\|(Z,G)\right\|$ as a simplicial complex of groups.
Its coarse space $\left|X\right|$ is the geometric realization of $Z$.

To apply the methods of differential topology to a given simplicial complex of groups, we fix a family of (germs of) smooth retractions $\tau\to\sigma$ for every pair of simplices $\sigma\subseteq\tau$ such that the maps $\tau\to\sigma\to\rho$ and $\tau\to\rho$ agree for $\rho\subseteq\sigma\subseteq\tau$ (such a family of smooth retractions may be constructed by induction).
Given this data, objects of differential topology on $\tau$ (functions, differential forms, bundles, connections, etc.)\ are required to be pulled back under $\tau\to\sigma$ in a(n unspecified) neighborhood of every $\sigma\subseteq\tau$.
For bundles, this requirement consists of the \emph{data} of a compatible family of isomorphisms with the pullback bundles (one could equivalently consider only vector bundles built out of transition functions which satisfy the given pullback conditions).

In particular, the de Rham complex $\Omega^*(X)$ of a simplicial complex of groups $X$ is defined, and it coincides with the de Rham complex of its coarse space $\Omega^*(\left|X\right|)$.
There is a natural map $\Omega^*(\left|X\right|)\to C^*(\left|X\right|;\RR)=C^*(X;\RR)$ from the de Rham complex to the simplicial cochain complex over $\RR$, given by integrating differential forms over oriented simplices.
It is a standard fact that this map is a quasi-isomorphism (proof: filter $X$ by skeleta and invoke the five-lemma to reduce to showing that $\Omega^*(\Delta^k,\partial\Delta^k)\to C^*(\Delta^k,\partial\Delta^k;\RR)$ is a quasi-isomorphism, which follows from the Poincar\'e lemma).

\begin{definition}[Chern character]
Let $X$ be a simplicial complex of groups, and let $V\to X$ be a complex vector bundle.
Given a hermitian metric $\mu$ and hermitian connection $\theta$ on $V$, the curvature of $\theta$ is a $2$-form $\Omega(V,\theta)$ valued in $\mathfrak u(V,\mu)\subseteq\mathfrak{gl}(V)=\End(V)$, and the Chern character form
\begin{equation*}
\ch(V,\theta):=\tr\exp\Bigl(\frac{\Omega(V,\theta)}{-2\pi i}\Bigr)\in\Omega^\even(X)
\end{equation*}
is closed.
Its class in cohomology $\ch(V)\in H^\even(X;\RR)$ is called the Chern character of $V$.
This class is independent of $\mu$ and $\theta$ by an interpolation argument (interpolate on $X\times[0,1]$ between any two metric/connection pairs on $X\times 0$ and $X\times 1$, and use the fact that $H^*(X\times[0,1])=H^*(X)$).
\end{definition}

For any stack $X$, its \emph{inertia stack} is the stack
\begin{equation*}
IX:=\Eq\bigl(X\righttwoarrows X\bigr)=X\times_{X\times X}X.
\end{equation*}
When $X=Y/G$, we have $IX=\bigl(\bigsqcup_{g\in G}Y^g\bigr)/G$.
In particular, the local description of the geometric realization of a simplicial complex of groups provides a local description of its inertia stack as well.
In fact, if $X$ is a simplicial complex of groups then so is $IX$: a simplex of $IX$ is a pair $(\sigma,[g])$ where $\sigma\subseteq X$ is a simplex and $[g]\subseteq G_\sigma$ is a conjugacy class, and $G_{(\sigma,[g])}$ is the centralizer of any element of the conjugacy class $[g]\subseteq G_\sigma$, etc.

\begin{definition}[Inertial Chern character]
Let $X$ be a simplicial complex of groups, and let $V\to X$ be a complex vector bundle.
The inertial Chern character $\ch^I(V)\in H^\even(IX;\CC)$ is represented by the closed form
\begin{equation*}
\ch^I(V,\theta):=\tr\Bigl[g\exp\Bigl(\frac\Omega{-2\pi i}\Bigr)\Bigr]\in\Omega^\even(IX;\CC)
\end{equation*}
for any choice of hermitian metric and connection $\theta$ on $V$.
Closedness can be seen by splitting the pullback of $V$ to $IX$ into isotypic pieces for the action of the cyclic group generated by $g$ and observing that the contribution of each such piece is a (usual) Chern character form.
Independence of the metric and connection follows from the same interpolation argument as before.
\end{definition}

\begin{example}
If $X=\BB G$, a vector bundle over $X$ is simply a representation of $G$, the inertia stack $IX=G/G$ is the stack quotient of the conjugation action of $G$ on itself, and the inertial Chern character $\ch^I(V):G/G\to\CC$ is the character $g\mapsto\tr(g|V)$ of $V$ regarded as a representation of $G$.
\end{example}

\begin{example}
If $X=Y\times\BB G$ for a smooth manifold $Y$ and $V=\bigoplus_{\rho\in\hat G}V_\rho\otimes\rho$, then $IX=Y\times(G/G)$ and the inertial Chern character $\ch^I(V):G/G\to H^*(Y;\CC)$ is given by $\ch^I(V)=\sum_\rho\ch(V_\rho)\chi_\rho$.
Since the characters $\chi_\rho$ of the irreducible representations $\rho$ of $G$ form a basis for the space of maps $G/G\to\CC$, we see that for $X=Y\times\BB G$, the inertial Chern character determines (and is determined by) the Chern characters of each of the associated bundles $V_\rho$.
\end{example}

\begin{example}
The degree zero part of the inertial Chern character $\ch_0^I(V)\in H^0(IX;\CC)$ records the characters of the fibers of $V$, regarded as representations of the isotropy groups $G_x$ of the points of $X$.
\end{example}

Despite admitting the aforementioned transcendental definition in terms of differential forms, the (usual) Chern character $\ch(V)\in H^*(X;\RR)$ is well known be rational, i.e.\ it lies in the subspace $H^*(X;\QQ)\subseteq H^*(X;\RR)$.
The inertial Chern character does not always lie in the subspace $H^*(IX;\QQ)\subseteq H^*(IX;\CC)$, rather it is rational with respect to a different $\QQ$-structure, which we now introduce.

\begin{definition}[$\chi$-integral cohomology of $IX$]\label{chiratdef}
Regard the simplicial cochain group $C^*(IX;\CC)$ as the group of simplicial cochains on $X$ with coefficients in $\Hom(G_x/G_x,\CC)$.
In this description, let us replace $\Hom(G_x/G_x,\CC)$ with its subspace of integral (resp.\ rational) linear combinations of characters of $G_x$.
Equivalently, we consider the complex of simplicial cochains on $IX$ which act on $\sigma\times(G_\sigma/G_\sigma)\subseteq IX$ (for any simplex $\sigma\subseteq X$) by an integral (resp.\ rational) linear combination of characters of $G_\sigma$.
We denote the resulting cohomology groups by $H^*(IX;\ZZ_\chi)$ (resp.\ $H^*(IX;\QQ_\chi)$), which are evidently functorial in $X$.
We say that an element of $H^*(IX;\CC)$ is \emph{$\chi$-integral} (resp.\ \emph{$\chi$-rational}) to mean that it lies in the image of $H^*(IX;\ZZ_\chi)$ (resp.\ in the subspace $H^*(IX;\QQ_\chi)$).
\end{definition}

\begin{example}
If $X=Y\times\BB G$ then $IX=Y\times(G/G)$ and $H^*(IX;\ZZ_\chi)=H^*(X;\ZZ[\hat G])$.
Thus $\ch^I(V)=\sum_\rho\ch(V_\rho)\chi_\rho$ is $\chi$-rational since each $\ch(V_\rho)$ is rational.
\end{example}

Since the coefficient systems on $X$ appearing in Definition \ref{chiratdef} are finite free modules over $\ZZ$, we may dualize to define homology groups $H_*(IX;\ZZ_\chi)$ as well.

\begin{lemma}\label{uct}
There are canonical isomorphisms
\begin{align*}
H^*(IX;\CC)&=\Hom(H_*(IX;\ZZ_\chi),\CC),\\
H^*(IX;\QQ_\chi)&=\Hom(H_*(IX;\ZZ_\chi),\QQ),
\end{align*}
and a canonical short exact sequence
\begin{equation*}
0\to\Ext^1(H_{*-1}(IX;\ZZ_\chi),\ZZ)\to H^*(IX;\ZZ_\chi)\to\Hom(H_*(IX;\ZZ_\chi),\ZZ)\to 0.
\end{equation*}
\end{lemma}

\begin{proof}
The complex $C_*(IX;\ZZ_\chi)$ is degreewise free, and the complexes $C^*(IX;\ZZ_\chi)$, $C^*(IX;\QQ_\chi)$, and $C^*(IX;\CC)$ are obtained from it by applying the functors $\Hom(-,\ZZ)$, $\Hom(-,\QQ)$, and $\Hom(-,\CC)$, respectively.
\end{proof}

\begin{proposition}[$\chi$-rationality of $\ch^I$]\label{inertialchernrational}
Let $X$ be a simplicial complex of groups of order $\leq m$.
The inertial Chern character of any vector bundle $V/X$ is $\chi$-rational.
In fact, there exists a positive integer $N=N(m,d,\dim V)$ such that $N\cdot\ch^I_d(V)$ is $\chi$-integral.
\end{proposition}

It is worth remarking that the proof we give does not provide an explicit expression for $N(m,d,n)$ other than for $n=1$.

\begin{proof}
We first show that the inertial Chern character of any line bundle $L/X$ is $\chi$-rational.
We have $\tr(ab)=\tr(a)\tr(b)$ for endomorphisms $a,b$ of $\CC$, so the inertial Chern character of a line bundle $L$ splits as the product
\begin{equation*}
\ch^I(L)=\ch_0^I(L)\exp(\ch_1(L))
\end{equation*}
where $\ch_0^I(L)\in H^0(IX;\ZZ_\chi)$ is the fiberwise character of $L$ and $\ch_1(L)\in H^2(X;\CC)$ is the Chern character in degree one.
It thus suffices to show that $\ch_1(L)$ is rational.
We have $k\ch_1(L)=\ch_1(L^{\otimes k})$, so it suffices to show that $\ch_1(L^{\otimes k})$ is integral for some positive integer $k$.
Since all isotropy groups of $X$ have order $\leq m$, the tensor power $L^{\otimes m!}$ descends to a line bundle $M$ on the coarse space.
Since $M$ is an ordinary line bundle over a space (rather than an orbispace), its first Chern class $c_1(M)=\ch_1(M)$ is integral.
We have thus shown that $\ch^I(L)$ is $\chi$-rational (in fact, we have shown that $d!(m!)^d\ch^I_d(L)$ is $\chi$-integral).

To treat the case of vector bundles of dimension greater than one, we will use the \emph{splitting principle}.
Given a vector bundle $V/X$ with hermitian metric, let $\FF(V)\to X$ denote the fibration whose fiber over $x\in X$ is the space of decompositions of $V_x$ into ordered orthogonal one-dimensional subspaces.
In other words, $\FF(V)=P/U(1)^n$ where $P\to X$ is the principal $U(n)$-bundle associated to $V/X$ with its chosen metric.
We claim that the pullback map
\begin{equation*}
H^*(IX;\CC)\to H^*(I\FF(V);\CC)
\end{equation*}
is injective.
The fiber of $I\FF(V)\to IX$ over a given point $(x,g)\in IX$ is the space of $g$-invariant ordered decompositions of $V_x$ into one-dimensional subspaces.
Since the group generated by $g$ is abelian, its irreducible representations are one-dimensional, and hence there are plenty of such decompositions which are $g$-invariant, namely when each one-dimensional subspace is contained in some $g$-isotypic piece of $V_x$.
Thus $I\FF(V)\to IX$ is a disjoint union of iterated projective space bundles, from which the desired injectivity statement follows (for any projective space bundle $\PP(W)\to Z$ over an orbispace $Z$, the pullback map $H^*(Z)\to H^*(\PP(W))$ is split by the map $\alpha\mapsto\int(c_1(L)^{\dim W-1}\cup\alpha)$ where $L/\PP(W)$ denotes the tautological line bundle and $\int:H^{*+2(\dim W-1)}(\PP(W))\to H^*(Z)$ denotes fiberwise integration).

Since $H^*(IX;\CC)\to H^*(I\FF(V);\CC)$ is injective and $\Hom_\ZZ(-,\CC)$ is exact, we conclude from Lemma \ref{uct} that $\Hom(H_*(IX;\ZZ_\chi)/H_*(I\FF(V);\ZZ_\chi),\CC)=0$, and hence that the domain is torsion.
From this and Lemma \ref{uct} again, it follows that a class in $H^*(IX;\CC)$ is $\chi$-rational iff its pullback to $H^*(I\FF(V);\CC)$ is $\chi$-rational.
Now consider the pullback of $V$ to $\FF(V)$.
This pullback splits as a direct sum of line bundles, so since the inertial Chern character is additive under direct sum, we conclude that the pullback of $\ch^I(V)$ to $I\FF(V)$ is $\chi$-rational, and hence that $\ch^I(V)$ is itself $\chi$-rational.

It remains to produce an integer $N=N(m,d,n)$ such that $N\cdot\ch^I_d(V)$ is $\chi$-integral for $\dim V=n$.
We claim that there exists a \emph{finite} orbi-complex $\BB_{m,d}U(n)$ carrying a principal $U(n)$-bundle
\begin{equation*}
\EE_{m,d}U(n)\to\BB_{m,d}U(n)
\end{equation*}
with the property that every principal $U(n)$-bundle over a simplicial complex of groups $X$ of dimension $\leq d$ and isotropy $\leq m$ is a pullback of $\EE_{m,d}U(n)\to\BB_{m,d}U(n)$.
Since the inertial Chern character of $\EE_{m,d}U(n)\to\BB_{m,d}U(n)$ is $\chi$-rational and $\BB_{m,d}U(n)$ is finite, there exists an integer $N=N(m,d,n)$ such that $N$ times this inertial Chern character is $\chi$-integral in cohomological degree $d$.
By pullback, the same integer $N$ works for any vector bundle of rank $n$ over a simplicial complex of groups of order $\leq m$.
Note that this argument gives no explicit bound on the integer $N$.

It remains to construct $\EE_{m,d}U(n)\to\BB_{m,d}U(n)$.
We seek $U(n)$-spaces $\EE_{m,d}U(n)$ such that
\begin{align*}
\EE_{m,-1}U(n)&=\varnothing,\\
\EE_{m,d}U(n)&=\EE_{m,d-1}U(n)\cup\biggl(\,\begin{matrix}\text{finitely many cells of the}\hfill\\\text{form }(D^d,\partial D^d)\times U(n)/G\hfill\end{matrix}\,\biggr),\\
\pi_r((\EE_{m,d}U(n))^G)&=0\quad\text{for }r<d,
\end{align*}
where $G\subseteq U(n)$ ranges over all finite subgroups of $U(n)$ of order $\leq m$.
Note that there are finitely many conjugacy classes of such subgroups, and it suffices to consider just one representative of each conjugacy class.
To show that $\EE_{m,d}U(n)$ exists given $\EE_{m,d-1}U(n)$, argue as follows.
We have $\pi_{d-1}((\EE_{m,d-1}U(n))^G)=H_{d-1}((\EE_{m,d-1}U(n))^G)$ by Hurewicz, and the latter group is finitely generated since $\EE_{m,d-1}U(n)$ is made up of finitely many cells.
We define $\EE_{m,d}U(n)$ by attaching cells $(D^d,\partial D^d)\times U(n)/G$ along a choice of finitely many generators of $\pi_{d-1}((\EE_{m,d-1}U(n))^G)$, for each of the finitely many subgroups $G\subseteq U(n)$ on our fixed set of representatives.
Note that $(D^d\times U(n)/H)^G=D^d\times\{a\in U(n)\,|\,a^{-1}Ga\subseteq H\}$, so any map $S^r\to(\EE_{m,d}U(n))^G$ for $r<d$ can be homotoped to land inside $(\EE_{m,d-1}U(n))^G$.
It follows that the homotopy groups of $(\EE_{m,d}U(n))^G$ satisfy the desired vanishing property.

Let us show that any principal $U(n)$-bundle $P\to X$ is pulled back from $\EE_{m,d}U(n)\to\BB_{m,d}U(n)$ when $\dim X\leq d$ with isotropy groups of order $\leq m$.
That is, we should construct an equivariant map $P\to\EE_{m,d}U(n)$ (note that since the target is a space, this is the same as an equivariant map from the coarse space $\left|P\right|$).
By induction on the cells of $X$, it suffices to solve the extension problem for equivariant maps from $(D^r,\partial D^r)\times U(n)/G$ to $\EE_{m,d}U(n)$ for $r\leq d$ and $\left|G\right|\leq m$.
This is equivalent to the extension problem for maps $(D^r,\partial D^r)\to(\EE_{m,d}U(n))^G$, whose positive solution for $r\leq d$ is one of the defining properties of $\EE_{m,d}U(n)$.
\end{proof}

\begin{example}
A previous version of this text claimed a version of Proposition \ref{inertialchernrational} with $N$ independent of $m$.
This stronger result is false; here is a counterexample.
Begin with $\BB(\ZZ/m)$ with a line bundle given by multiplication by $e^{2\pi i/m}$ on $\CC$.
Glue on a disk $D^2$ using an attaching map $\partial D^2=S^1\to\BB(\ZZ/m)$ to a generator of $\ZZ/m$.
The line bundle extends since all complex line bundles on a circle are trivial.
The inertial Chern character of this line bundle has denominator $m$.
\end{example}

\begin{lemma}[Bott]\label{cherncharacterimage}
The image of the Chern character map
\begin{equation*}
\ch_n:\pi_{2n}(BU)\to H^{2n}(S^{2n};\QQ)
\end{equation*}
is precisely $H^{2n}(S^{2n};\ZZ)$.
\end{lemma}

\begin{proof}
This is an immediate corollary of Bott periodicity; for completeness, we include the proof from \cite{mathewbott}.
Let $\eta/S^2$ denote the line bundle with $c_1(\eta)=1$, and let $1$ denote the trivial line bundle.
According to Bott periodicity, multiplication with $\eta-1\in\tilde K^0(S^2)$ defines an isomorphism $\tilde K^0(X)\to\tilde K^0(\Sigma^2X)$.
In particular, $(\eta-1)^{\otimes n}\in\tilde K^0(S^{2n})$ is a free generator.
Multiplicativity of the Chern character gives $\ch_n((\eta-1)^{\otimes n})=\ch_1(\eta-1)^n=1^n=1$.
\end{proof}

We now have all the ingredients we need to prove the main result of this section, namely Theorem \ref{maingeneral} for simplicial complexes of groups.

\begin{theorem}\label{mainsimplicial}
Let $X$ be a $d$-dimensional simplicial complex of groups of order $\leq m$.
There exists a complex vector bundle $V$ of rank $n=n(d,m)>0$ over $X$, whose fiber over $x\in X$ is isomorphic to a direct sum of copies of the regular representation of $G_x$.
\end{theorem}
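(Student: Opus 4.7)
The plan is to construct $V$ by obstruction theory, building it inductively over the skeleta $X^{(0)} \subset \cdots \subset X^{(d)} = X$, and enlarging the partial bundle at each stage by direct-summing copies of itself to kill torsion extension obstructions.

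Fix $N$ divisible by $m!$ and aim for a bundle of total rank $N$ whose fiber over (the generic point of) each simplex $\sigma$ is the $G_\sigma$-representation $E_\sigma := \CC[G_\sigma]^{\oplus N/|G_\sigma|}$. These target fibers are compatible along face inclusions $\tau\subset\sigma$: the associated injection $G_\sigma \hookrightarrow G_\tau$ restricts $\CC[G_\tau]$ to $[G_\tau : G_\sigma]$ copies of $\CC[G_\sigma]$, so the restriction of $E_\tau$ is $E_\sigma$. On each (contractible) simplex $\sigma$, a bundle with fiber $E_\sigma$ is classified by a map into $BH_\sigma$, where $H_\sigma := \Aut_{G_\sigma}(E_\sigma) \cong \prod_i U(N\dim V_i/|G_\sigma|)$, the product running over irreducible representations $V_i$ of $G_\sigma$.

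In the inductive step, suppose $V^{(k)}$ is a bundle on $X^{(k)}$ with fibers of the prescribed form. For each $(k{+}1)$-simplex $\sigma$, the restriction $V^{(k)}|_{\partial\sigma}$ is classified by a map $S^k \to BH_\sigma$, and the local obstruction to its extension over $\sigma$ is a class in $\pi_k(BH_\sigma) = \pi_{k-1}(H_\sigma)$. Via simplicial obstruction theory, these local obstructions assemble into a global class $o_{k+1}$ in cohomology of $X$ with coefficients in the local system built from the $\pi_{k-1}(H_\sigma)$. The crux of the argument --- and the principal difficulty --- is to show that $o_{k+1}$ is a torsion class. The rational part of $o_{k+1}$ should be detected by a Chern-character-type invariant of $V^{(k)}$ living on the inertia stack of $X$; since the character of the regular representation vanishes away from the identity, the Chern character of $E_\sigma$ is zero on every non-identity sector, and the identity-sector Chern character in positive degree can be arranged to vanish inductively. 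With the rational obstruction thus eliminated, $o_{k+1}$ is torsion, so replacing $V^{(k)}$ by a sufficiently large direct sum $(V^{(k)})^{\oplus a}$ kills it and permits extension to $X^{(k+1)}$.

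After at most $d$ inductive steps, each multiplying the rank by a factor bounded in terms of $d$ and $m$, one obtains $V$ on $X = X^{(d)}$ of rank $n(d,m)$. Making the torsion step precise --- identifying the local coefficient system, constructing the comparison between the rational part of $o_{k+1}$ and the inertial Chern character, and bounding the torsion order uniformly in $k \le d$ and $|G_\sigma| \le m$ --- is the central technical challenge.
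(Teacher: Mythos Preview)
Your overall architecture matches the paper's: obstruction theory over the skeleta, with the inertial Chern character as the rational detector and direct-summing to kill torsion. But the argument as stated has a genuine gap at the step where you assert the obstruction is torsion.

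The sentence ``since the character of the regular representation vanishes away from the identity, the Chern character of $E_\sigma$ is zero on every non-identity sector'' conflates the degree-zero part of the inertial Chern character (which is indeed the fiberwise character, and does vanish off the identity for the regular representation) with the full $\ch^I$. On $Y\times\BB G$ one has $\ch^I(V)(g)=\sum_\rho\tr(g|\rho)\,\ch(W_\rho)$, so the higher-degree pieces involve the ordinary Chern characters of the isotypic summands $W_\rho$. During your induction the partial bundle $V^{(k)}$ has the correct fiber \emph{representation}, but the $W_\rho$ are in no way forced to be trivial bundles on $\partial\sigma$, so $\ch^I(V^{(k)})$ can be nonzero in positive degree on every sector. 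Consequently the extension obstruction in $\pi_k(BH_\sigma)$ need not be torsion, and ``can be arranged to vanish inductively'' is exactly the point that needs a mechanism.

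The paper supplies that mechanism: one constructs not $V$ alone but a triple $(V,\theta,\gamma)$ with $\theta$ a unitary connection and $\gamma\in\Omega^{\odd}(IX;\CC)$ satisfying $\ch^I(V,\theta)=n\chi_1+d\gamma$. Carrying $\gamma$ along guarantees that the obstruction to extending $V$ over each cell vanishes (since $\ch^I$ injects $\prod_\rho\pi_{k-1}(BU)$ into cohomology and $\gamma$ witnesses its triviality). The burden then shifts to extending $\gamma$, and \emph{that} obstruction is what is shown to be torsion of bounded order. Proving this last point requires three further ingredients absent from your sketch: a $\ZZ$-structure on $H^*(IX)$ coming from the lattice of virtual characters in each stalk (so that ``rational with bounded denominator'' makes sense), a splitting principle on the inertia stack reducing to line bundles, and an explicit computation of the universal inertial Chern character on $\BB U(1)$. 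Your closing paragraph correctly flags the difficulty, but the missing idea is the trivializing primitive $\gamma$; without it the induction does not close.
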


\begin{proof}
The condition on the fibers of $V$ amounts to the assertion that $\ch^I_0(V)=n\chi_1$ where $\chi_1\in H^0(IX;\QQ_\chi)$ denotes ``the characteristic function of the identity'' as a function $G_x/G_x\to\CC$.
We will construct $V$ satisfying
\begin{equation*}
\ch^I(V)=n\chi_1
\end{equation*}
(meaning all higher inertial Chern characters vanish).
Certainly such a vector bundle exists over the $0$-skeleton $X_0$ of $X$, with rank $m!$, since all isotropy groups have order $\leq m$.
It therefore suffices to show that any such vector bundle on $X_{k-1}$ extends to $X_k$.

To extend $V$ as a vector bundle from $X_{k-1}$ to $X_k$ amounts to doing an extension from $\partial D^k\times\BB G$ to $D^k\times\BB G$ for each $k$-simplex.
The obstruction to doing this lies in $\prod_{\rho\in\hat G}\pi_{k-1}(BU(n_\rho))$, where $n_\rho=(\dim V)(\dim\rho)/\left|G\right|$.
The map $BU(n_\rho)\to BU$ is an isomorphism on $\pi_{k-1}$ provided $k-1<2n_\rho+2$, which is guaranteed by taking say $\dim V>dm$.
The Chern character detects $\pi_*(BU)$ by Lemma \ref{cherncharacterimage}, so vanishing of the inertial Chern character and linear independence of characters means that these obstructions all vanish.
Thus $V$ extends to $X_k$.

It remains to show that the extension of $V$ to $X_k$ can be taken to have trivial inertial Chern character.
The space of extensions over a given $k$-simplex is a torsor for $\prod_{\rho\in\hat G}\pi_k(BU(n_\rho))$, which is again $\prod_{\rho\in\hat G}\pi_k(BU)$ once we take $\dim V>dm$.
We will use this freedom to ensure that the inertial Chern character of $V$ on $X_k$ vanishes.
Consider the long exact sequence
\begin{equation*}
\cdots\to H^*(IX_k,IX_{k-1};\QQ_\chi)\to H^*(IX_k;\QQ_\chi)\to H^*(IX_{k-1};\QQ_\chi)\to\cdots.
\end{equation*}
Since the inertial Chern character in $H^*(IX_k;\QQ_\chi)$ maps to zero in $H^*(IX_{k-1};\QQ_\chi)$, it is in the image of $H^*(IX_k,IX_{k-1};\QQ_\chi)$.
In particular, it may be nonzero only in degree $k$.
Let us now replace $V$ with $V^{\oplus a}$ (which multiplies its inertial Chern character by $a$) for an appropriate positive integer $a$ so that by Proposition \ref{inertialchernrational} its inertial Chern character is $\chi$-integral.
Modifying our bundle by an element of (the product over the $k$-simplices of) $\prod_{\rho\in\hat G}\pi_k(BU)$ shifts its inertial Chern character by anything in $H^k(IX_k,IX_{k-1};\ZZ_\chi)$ by Lemma \ref{cherncharacterimage}.
We are done since $H^k(IX_k,IX_{k-1};\ZZ_\chi)\to H^k(IX_k;\ZZ_\chi)$ is surjective by the long exact sequence.
\end{proof}

\begin{remark}
One may interpret the proof of Theorem \ref{mainsimplicial} in homotopy theoretic terms as follows.
Vector bundles are classified by maps to a classifying space $\BB U(n)$, and since $\BB U(n)$ is not contractible, the extension problem for vector bundles has nontrivial obstructions.
Vector bundles with rationally trivialized inertial Chern character are classified by maps to the total space of a fibration over $\BB U(n)$ whose fiber classifies odd-dimensional rational cohomology classes on the inertia stack.
Our observation that the obstructions to this new problem are torsion is essentially the observation that this total space is rationally contractible.
The definition of this fibration over $\BB U(n)$ depends on the additivity of the Chern character (note that $\EE U(n)\to\BB U(n)$ is not suitable for this argument since $\EE U(n)$ is a space, hence every bundle pulled back from $\EE U(n)$ has trivial isotropy representations).
It may prove interesting to intepret this argument within Schwede's framework of global homotopy theory \cite{schwedeglobal}.
\end{remark}

\section{Topological stacks}\label{stackssec}

We review some basic facts about stacks (on the category of topological spaces), we give a precise definition of what we mean by an `orbispace', and we establish some of their basic properties.
For further background, the reader may wish to consult Noohi \cite{noohifoundations}, Gepner--Henriques \cite{gepnerhenriques}, Behrend \cite{behrendintroduction}, Metzler \cite{metzler}, Behrend--Noohi \cite{behrendnoohi}, Heinloth \cite{heinlothnotes}, or Laumon--Moret-Bailly \cite{lmb}.

Let $\Top$ denote the category of topological spaces and continuous maps, and let $\Grpd$ denote the 2-category of (essentially) small groupoids.
A \emph{stack} is a functor $F:\Top^\op\to\Grpd$ which satisfies \emph{descent}, i.e.\ such that for every topological space $U$ and every open cover $\{U_i\to U\}_i$, the natural functor
\begin{equation*}
F(U)\to\Eq\Bigl[\prod_iF(U_i)\righttwoarrows\prod_{i,j}F(U_i\cap U_j)\rightthreearrows\prod_{i,j,k}F(U_i\cap U_j\cap U_k)\Bigr]
\end{equation*}
is an equivalence.
Stacks form a 2-category, with morphisms given by natural transformations of functors.
The 2-category of stacks is complete, meaning all (small) limits exist; furthermore these limits may be calculated pointwise in the sense that $(\lim_\alpha F_\alpha)(U)=\lim_\alpha(F_\alpha(U))$.
Note that, as we are working in a 2-categorical context, all functors are 2-functors, all diagrams are 2-diagrams, all limits are 2-limits, etc.\ (though we will usually omit the prefix `2-').

The Yoneda lemma implies that the Yoneda functor $X\mapsto\Hom(-,X)$ embeds the category of topological spaces fully faithfully into the 2-category of stacks, and moreover that the natural map from $F(X)$ to the groupoid of maps of stacks $\Hom(-,X)\to F(-)$ is an equivalence.
The category of topological spaces is complete, and the Yoneda embedding is continuous (commutes with limits).
Hence we will make no distinction between a topological space $X$ and the associated stack $\Hom(-,X)$ of maps to $X$, nor between objects of $F(X)$ and maps $\Hom(-,X)\to F(-)$ (which we will simply write as $X\to F$).

Every stack $X$ has a \emph{coarse space} $\left|X\right|$ (a topological space) which is initial in the category of maps from $X$ to topological spaces.
Concretely, the points of $\left|X\right|$ are the isomorphism classes of maps $*\to X$, and a subset $U\subseteq\left|X\right|$ is open iff for every map $Y\to X$ from a topological space $Y$, the inverse image of $U$ is an open subset of $Y$.

A stack is called \emph{representable} iff it is in the essential image of the Yoneda embedding (i.e.\ it is isomorphic to a topological space).
A morphism of stacks $F\to G$ is called representable iff for every map $X\to G$ from a topological space $X$, the fiber product $F\times_GX$ is representable.

For any property $\cP$ of morphisms of topological spaces which is preserved under pullback, a representable morphism of stacks $F\to G$ is said to have property $\cP$ iff the pullback $F\times_GX\to X$ has $\cP$ for every map $X\to G$ from a topological space $X$.
The following are examples of properties $\cP$ of morphisms $f:X\to Y$ which are preserved under pullback:
\begin{itemize}
\item$f$ is \emph{injective}.
\item$f$ is \emph{surjective}.
\item$f$ is \emph{open}, meaning that the image of any open set is open.  In contrast, the property of being \emph{closed}, meaning that the image of any closed set is closed, is \emph{not} preserved under pullback.
\item$f$ is an \emph{embedding}, meaning that it is a homeomorphism onto its image.
\item$f$ is a \emph{closed embedding}.
\item$f$ is \emph{\'etale}, meaning that for every $x\in X$ there exists an open neighborhood $x\in U\subseteq X$ such that $f|_U:U\to Y$ is an open embedding.
\item$f$ is \emph{separated}, meaning that for every distinct pair $x_1,x_2\in X$ with $f(x_1)=f(x_2)$, there exist open neighborhoods $x_i\in U_i\subseteq X$ which are disjoint $U_1\cap U_2=\varnothing$.
(This is equivalent to the relative diagonal $X\to X\times_YX$ being a closed embedding.)
\item$f$ is \emph{universally closed}, meaning that $X\times_YZ\to Z$ is closed for every $Z\to Y$.
This is equivalent to the assertion that for every $y\in Y$ and every collection of open sets $\{U_i\subseteq X\}_i$ covering $f^{-1}(y)$, there exists a finite subcollection which covers $f^{-1}(V)$ for some open neighborhood $y\in V\subseteq Y$.
(One proof of this equivalence goes via yet a third equivalent condition, namely that every net $\{x_\alpha\in X\}_\alpha$ with $f(x_\alpha)\to y$ has a subnet converging to some $x\in f^{-1}(y)$.)
\item$f$ is \emph{proper}, meaning that it is separated and universally closed.
\item$f$ \emph{admits local sections}, meaning that there is an open cover $\{U_i\subseteq Y\}_i$ such that every restriction $f|_{f^{-1}(U_i)}:f^{-1}(U_i)\to U_i$ admits a section.
\item$f$ is a \emph{finite covering space}, meaning that there is an open cover $\{U_i\subseteq Y\}_i$ such that every restriction $f|_{f^{-1}(U_i)}:f^{-1}(U_i)\to U_i$ is isomorphic to $U_i^{\sqcup n_i}\to U_i$ for some integer $n_i\geq 0$.
\end{itemize}
Each of these properties $\cP$ is also closed under composition, and thus also under fiber products, meaning that for maps $X\to X'$ and $Y\to Y'$ over $Z$, if both $X\to X'$ and $Y\to Y'$ have $\cP$ then $X\times_ZY\to X'\times_ZY'$ also has $\cP$ (indeed, $X\times_ZY\to X\times_ZY'$ is a pullback of $Y\to Y'$).

For any stack $X$, open (resp.\ closed) embeddings $Y\hookrightarrow X$ are in natural bijection with open (resp.\ closed) subsets of $\left|X\right|$.

To check that a given map of spaces satisfies one of the properties $\cP$ above, it is often helpful to make use of the fact that these $\cP$ are all \emph{local on the target}, meaning that for every open cover $\{U_i\subseteq Y\}_i$, if $X\times_YU_i\to U_i$ has $\cP$ for every $i$, then so does $X\to Y$.
This leads to the following generalization for maps of stacks: if $F\to G$ is a representable morphism of stacks and $G'\to G$ is a representable morphism of stacks admitting local sections, then $F\to G$ has $\cP$ iff $F\times_GG'\to G'$ has $\cP$.
In fact, in this statement we need not assume that $G'\to G$ is representable, just that it admit local sections in the generalized sense that for every map $X\to G$ from a topological space $X$, there exists an open cover $\{U_i\subseteq X\}_i$ such that each $G'\times_GU_i\to U_i$ admits a section.
We thus say that ``$\cP$ descends along maps admitting local sections''.
The same descent property holds for representability itself:

\begin{lemma}[Representability descends under maps admitting local sections]\label{repdescend}
Let $F\to G$ be a map of stacks, and let $G'\to G$ be a map of stacks admitting local sections.
If $F\times_GG'\to G'$ is representable, then so is $F\to G$.
\end{lemma}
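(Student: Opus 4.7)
The plan is to verify representability of $F\to G$ by checking the defining condition: for any map $X\to G$ from a topological space $X$, the fiber product $F\times_GX$ is representable. The strategy is to cover $F\times_GX$ by representable open substacks pulled back from $X$, and then glue these representables in $\Top$ to reconstruct $F\times_GX$ via descent.

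First, I would invoke the generalized local sections hypothesis for $G'\to G$ applied to the map $X\to G$: this yields an open cover $\{U_i\hookrightarrow X\}_i$ together with sections $U_i\to G'\times_GU_i$, and composing with the projection to $G'$ gives lifts $\tilde u_i:U_i\to G'$ of $U_i\hookrightarrow X\to G$. Using $\tilde u_i$, the fiber product factors as
\begin{equation*}
F\times_GU_i\;\cong\;(F\times_GG')\times_{G'}U_i,
\end{equation*}
and since $F\times_GG'\to G'$ is representable, this pullback is a topological space; call it $V_i$. The same argument applied to $U_i\cap U_j$ shows that $V_{ij}:=F\times_G(U_i\cap U_j)$ is a topological space, and canonically an open subspace of both $V_i$ and $V_j$. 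Thus the open substacks $V_i\hookrightarrow F\times_GX$ (corresponding to the open subsets $|V_i|\subseteq|F\times_GX|$) form an open cover by representables.

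Second, I would glue the $V_i$ along the $V_{ij}$ in $\Top$ to produce a topological space $V$, equipped with an open cover $\{V_i\hookrightarrow V\}_i$. The compatible family of maps $V_i\hookrightarrow F\times_GX$ descends to a map of stacks $V\to F\times_GX$, and the final step is to verify this is an equivalence. For any topological space $T$, a map $T\to F\times_GX$ yields a map $T\to X$ through which I pull back $\{U_i\}$ to an open cover $\{T_i\}$ of $T$; over each $T_i$ the map factors (canonically) through $V_i$, and these factorizations agree on $T_i\cap T_j$ as maps to $V_{ij}$. By descent for the topological space $V$ (i.e.\ the Yoneda embedding being a sheaf for open covers), these glue uniquely to a map $T\to V$, giving an inverse equivalence on $T$-points. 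The main obstacle is this last verification, but it is essentially formal — the content lies entirely in matching the two descent patterns (for $F\times_GX$ as a stack and for $V$ as a space) along the cover $\{U_i\}$.
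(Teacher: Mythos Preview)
Your proposal is correct and follows essentially the same approach as the paper: reduce to testing against a space $X\to G$, use the local-sections hypothesis to produce an open cover $\{U_i\}$ of $X$ lifting to $G'$, observe that each $F\times_GU_i\cong(F\times_GG')\times_{G'}U_i$ is representable, and glue along the overlaps $F\times_G(U_i\cap U_j)$. The paper's proof is terser (it phrases the reduction as ``assume $G$ representable'' and omits the final descent verification), but the argument is the same.
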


\begin{proof}
By replacing $F\to G$ and $G'\to G$ with their pullbacks under $U\to G$ for a topological space $U$, we may assume without loss of generality that $G$ is representable.
Since $G'\to G$ admits local sections, we may replace $G'\to G$ with the composition $\bigsqcup_iU_i\to G'\to G$ where $\{U_i\subseteq G\}_i$ is an open cover.
Now each $F\times_GU_i$ is representable by assumption, and gluing these spaces together on their common overlaps $F\times_G(U_i\cap U_j)$ gives a topological space representing $F$.
\end{proof}

A complex vector bundle over a stack $X$ is a representable map $V\to X$ together with maps $V\times_XV\to V$ and $\CC\times V\to V$ (both over $X$) such that for every map $U\to X$ from a topological space $U$, there exists an open cover $\{U_i\subseteq U\}_i$ and integers $n_i\geq 0$ such that $V\times_XU_i\to U_i$ is isomorphic to $\CC^{n_i}\times U_i\to U_i$ equipped with its fiberwise addition and scaling maps.
A pullback of a complex vector bundle is naturally a complex vector bundle.

The class of so called \emph{topological stacks} (those which admit a presentation via a topological groupoid) are somewhat better behaved than general stacks.
A \emph{topological groupoid} $M\righttwoarrows O$ consists of a pair of topological spaces $O$ (`objects') and $M$ (`morphisms'), two maps $M\righttwoarrows O$ (`source' and `target'), a map $O\to M$ (`identity'), an involution $M\to M$ (`inverse'), and a map $M\times_OM\to M$ (`composition') satisfying the axioms of a groupoid.
A topological groupoid $M\righttwoarrows O$ presents a stack $[M\righttwoarrows O]$ defined as follows.
An object of $[M\righttwoarrows O](X)$ is an open cover $\{U_i\subseteq X\}_i$ together with maps $U_i\to O$ and $U_i\cap U_j\to M$ satisfying a compatibility condition, and an isomorphism in $[M\righttwoarrows O](X)$ consists of maps $U_i\cap U_{i'}'\to M$ satisfying a compatibility condition.
There is a natural map $O\to[M\righttwoarrows O]$ (take the trivial open cover $\{X\subseteq X\}$) and the fiber product $O\times_{[M\righttwoarrows O]}O$ is naturally identified with $M$.
The morphism $O\to[M\righttwoarrows O]$ admits local sections (by definition), so since $O\times_{[M\righttwoarrows O]}O\to O$ is representable, it follows by descent that $O\to[M\righttwoarrows O]$ is representable.
Conversely, a representable map $U\to X$ admitting local sections from a topological space $U$ to a stack $X$ determines a topological groupoid $U\times_XU\righttwoarrows U$ presenting $X$.
Indeed, the fiber product $U\times_XU$ is representable (since $U$ and $U\to X$ are), it admits two maps to $U$ (the two projections), an involution (exchanging the two factors), and a composition map $(U\times_XU)\times_U(U\times_XU)=U\times_XU\times_XU\to U\times_XU$ (forgetting the middle factor), and one can check using the stack property that the natural map $[U\times_XU\righttwoarrows U]\to X$ is an equivalence.
A stack $X$ for which there exists a representable map $U\to X$ admitting local sections from a space $U$ is called \emph{topological}, and such $U\to X$ is called an \emph{atlas} for $X$.

For a topological stack $X$ with atlas $U\to X$, for any property $\cP$ which descends along maps admitting local sections, the map $U\to X$ has $\cP$ iff both maps $U\times_XU\to U$ have $\cP$, and the diagonal $X\to X\times X$ has $\cP$ iff the map $U\times_XU\to U\times U$ has $\cP$.
In particular, for any topological stack $X$, the diagonal $X\to X\times X$ is representable, and thus every map $Z\to X$ from a topological space $Z$ is representable.
More generally, for any map of topological stacks $X\to Y$, the relative diagonal $X\to X\times_YX$ is representable (by descent from its pullback $V\times_XV\to V\times_YV$ for an atlas $V\to X$).
If $X\to Y$ is representable and $Y$ is topological, then so is $X$ (if $U\to Y$ is an atlas for $Y$, then its pullback $V=U\times_YX\to X$ is an atlas for $X$), and the relative diagonal $X\to X\times_YX$ has $\cP$ iff the relative diagonal $V\to V\times_UV$ of atlases has $\cP$ (the latter is the pullback of the former under the map $V\times_UV\to X\times_YX$, which is representable and admits local sections since it is a pullback of $U\to Y$).

For a topological space $V$, a topological group $G$, and a continuous group action $G\acts V$, we may consider the action groupoid $G\times V\righttwoarrows V$ with source and target maps $(g,x)\mapsto x$ and $(g,x)\mapsto gx$.
The stack associated to this groupoid is denoted $V/G$ and is called the stack quotient of the action $G\acts V$.
If $G$ is discrete, the two maps $G\times V\to V$ are \'etale, so $V\to V/G$ is \'etale.
If $G$ is compact and $V$ is Hausdorff, the map $G\times V\to V\times V$ is universally closed (factor as $G\times V\to G\times V\times V$ which is a closed embedding since $V$ is Hausdorff and $G\times V\times V\to V\times V$ which is universally closed since $G$ is compact), so $V/G$ has universally closed diagonal.
If $G$ is Hausdorff, then the map $G\times V\to V\times V$ is separated, so $V/G$ has separated diagonal.
Thus if $G$ is compact Hausdorff and $V$ is Hausdorff then $V/G$ has proper diagonal.

A groupoid presentation of a topological stack $X$ also gives a description of its coarse space $\left|X\right|$ as follows.
For an atlas $U\to X$, consider the equivalence relation $\sim_X$ on $U$ given by the image of $U\times_XU\to U\times U$.
There is a map $X\to U/{\sim_X}$ (which is tautological once we regard $X$ as $[U\times_XU\righttwoarrows U]$), inducing a map $\left|X\right|\to U/{\sim_X}$, which is a bijection, essentially by definition.
Since an open substack of $X$ pulls back to an open subset of $U$ invariant under $\sim_X$, it follows that $\left|X\right|\xrightarrow\sim U/{\sim_X}$ is open and is thus a homeomorphism.
In particular, it follows that the coarse space $\left|V/G\right|$ of the stack quotient $V/G$ is the usual topological quotient of $V$ by $G$.

An action $G\acts V$ is called \emph{locally trivial} iff $V$ admits a cover by $G$-invariant open sets $\{G\times Z_i\subseteq V\}_i$ where $G\acts G\times Z_i$ acts by left multiplication on $G$ and trivially on $Z_i$.
If $G\acts V$ is locally trivial, then the natural map from the stack quotient to the topological quotient $V/G\to\left|V/G\right|$ is an equivalence.
Indeed, this assertion is local on $\left|V/G\right|$, so it suffices to consider the case of $G\acts G\times Z$, where it holds by inspection.

\begin{definition}\label{separatedorbsipacedefn}
A \emph{(separated) orbispace} is a stack $X$ for which there exists a representable \'etale surjection $U\to X$ from a topological space $U$ (an `\'etale atlas'), and the diagonal $X\to X\times X$ is proper.
\end{definition}

\begin{proposition}\label{orbispacelocal}
A stack $X$ is an orbispace iff $\left|X\right|$ is Hausdorff and there exists an open cover $\{V_i/G_i\subseteq X\}_i$ where $G_i$ are finite discrete groups acting on Hausdorff spaces $V_i$.
\end{proposition}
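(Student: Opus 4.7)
The plan is to prove the biconditional in two directions.

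For the ``if'' direction, the approach is to verify the two clauses of Definition \ref{separatedorbsipacedefn}. I would set $U=\bigsqcup_i V_i$; since each $V_i\to V_i/G_i$ is étale (as $G_i$ is discrete) and each open inclusion $V_i/G_i\hookrightarrow X$ is étale, the composite $U\to X$ is a representable étale surjection, i.e.\ an étale atlas. To check that $X\to X\times X$ is separated and proper, I would descend along the étale surjection $U\times U\to X\times X$ and examine each $V_i\times_X V_j\to V_i\times V_j$. For $i=j$ this pullback is the action groupoid $G_i\times V_i\to V_i\times V_i$, $(g,v)\mapsto(v,gv)$, which is separated and proper by the discussion earlier in the text. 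For $i\neq j$ the image in $V_i\times V_j$ is the closed set $Z_{ij}=\{(v,w):[v]=[w]\in\left|X\right|\}$ (closed because the diagonal $\left|X\right|\hookrightarrow\left|X\right|\times\left|X\right|$ is closed by Hausdorffness), and the map $V_i\times_X V_j\to Z_{ij}$ is a covering with finite fibers (the $\Hom_X$-torsors for the isotropy groups), hence proper; separatedness follows from the same finiteness.

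For the ``only if'' direction, I would first extract Hausdorffness of $\left|X\right|$: for any étale atlas $U\to X$, the relation $R=U\times_X U$ is proper over $U\times U$ (being the pullback of $\Delta_X$), so its image is a closed equivalence relation on $U$, and since $U\to\left|X\right|$ is an open surjection this yields Hausdorffness. For the local chart at a point $x\in\left|X\right|$, I would lift to $\tilde x\in U$ and set $G=\Aut_X(\tilde x)$, the fiber of $R\to U\times U$ over $(\tilde x,\tilde x)$, which is finite since it is compact by properness and discrete by étaleness of $s:R\to U$. For each $g\in G$, the étaleness of $s$ provides a unique local section $\sigma_g$ of $s$ with $\sigma_g(\tilde x)=g$; set $\phi_g=t\circ\sigma_g$. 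Uniqueness of étale lifts applied to groupoid composition forces $\phi_{gh}=\phi_g\circ\phi_h$, and after intersecting domains and passing to a $G$-invariant neighborhood $W\ni\tilde x$ (possible since $G$ is finite), the $\phi_g$ assemble into an action $G\acts W$. Finally I would shrink $W$ so that $R|_{W\times W}=\bigsqcup_g\sigma_g(W)$; then $W/G\to X$ is an open substack inclusion containing $x$, providing the required chart.

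The hard part will be the final shrinking step: ensuring that on a small enough $W$, every morphism in $R|_{W\times W}$ comes from $G$ and not from some extraneous morphism between nearby points of $U$. Without this, $W/G\to X$ could fail to be injective on coarse spaces. The key tool is properness of $R\to U\times U$: it guarantees that any open neighborhood in $R$ of the finite fiber $G\subseteq R$ above $(\tilde x,\tilde x)$ already contains all of $R$ above some open neighborhood of $(\tilde x,\tilde x)$ in $U\times U$. Taking this open neighborhood to be $\bigcup_g\sigma_g(W_g)$ and then restricting to a small enough $W\times W$ eliminates all extraneous morphisms, and the remaining steps reduce to routine descent and groupoid manipulations once the formalism reviewed earlier is in place.
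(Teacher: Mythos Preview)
Your overall strategy matches the paper's closely in both directions, and the ``only if'' direction is essentially correct (the paper derives Hausdorffness of $\left|X\right|$ after building the local charts rather than before, but your ordering works too, since the \'etale projections $U\times_XU\righttwoarrows U$ are open and hence the saturation of any open set is open). One small omission: you should verify that your chart $W$ is Hausdorff; the paper does this by noting that the diagonal $W\hookrightarrow W\times_XW\to W\times W$ is a composition of proper maps.

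There is, however, a genuine gap in your ``if'' direction. The map $V_i\times_XV_j\to Z_{ij}=V_i\times_{\left|X\right|}V_j$ is \emph{not} a covering space in general: its fiber over a point $(v,w)$ lying over $x\in\left|X\right|$ is a $G_x$-torsor, and $\left|G_x\right|$ is not locally constant. For instance, take $X=\CC/(\ZZ/2)$ with $V_i=V_j=\CC$: then $V_i\times_XV_j\cong\CC\sqcup\CC$ while $Z_{ij}=\{(z,w):w=\pm z\}$ is two lines through the origin, and the map is $2$-to-$1$ over $(0,0)$ and $1$-to-$1$ elsewhere. So ``covering with finite fibers, hence proper'' does not go through as stated. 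The paper closes this gap differently: after factoring through the closed inclusion $Z_{ij}\hookrightarrow V_i\times V_j$ exactly as you do, it observes that properness and separatedness of $V_i\times_XV_j\to Z_{ij}$ are local on $\left|X\right|$, so one may shrink until $V_i/G_i=X=V_j/G_j$; then the map in question is a pullback of the diagonal of a single quotient stack $V/G$, which is already known to be separated and proper from the action-groupoid discussion. You should replace the covering claim with this localization step.
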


(Similar results include \cite[Theorem 1.108]{behrendintroduction} and \cite[Proposition 14.10]{noohifoundations}.)

\begin{proof}
Let $X$ be an orbispace, and let us show that there is an open cover $\{V_i/G_i\subseteq X\}_i$.
Fix an \'etale atlas $U\to X$, and let $u\in U$.
The automorphism group $G_u:=\{u\}\times_X\{u\}\subseteq U\times_XU$ is finite and discrete since $X\to X\times X$ is proper.
Since the two projections $U\times_XU\righttwoarrows U$ are \'etale, for every $g\in G$ there exists an open neighborhood $g\in U_g\subseteq U\times_XU$ such that each projection restricted to $U_g$ is an open embedding (this gives another proof that $G$ has the discrete topology).
Since $G$ is finite and $U\times_XU\to U\times U$ is separated, we may take these $U_g$ to be disjoint.
Now the complement of $\bigsqcup_gU_g$ is closed and disjoint from $G$, so it projects to a closed (by properness) subset of $U\times U$ disjoint from $(u,u)$.
Hence there is an open neighborhood $u\in V\subseteq U$ such that $V\times_XV\subseteq\bigsqcup_gU_g$.
Thus $V\times_XV$ is a disjoint union of pieces indexed by $G$, and each piece maps homeomorphically to $V$ under each projection.
By further shrinking $V$, we may assume that the map $V\times_XV\to G$ respects composition (this is possible since composition is continuous).
It follows that $V\times_XV\righttwoarrows V$ is an action groupoid $G\times V\righttwoarrows V$ for an action $G\acts V$.
Since $V=\{1\}\times V\hookrightarrow G\times V=V\times_XV\to V\times V$ expresses the diagonal of $V$ as a composition of proper maps, we conclude that $V$ is Hausdorff.
Now the map of groupoids $(G\times V\righttwoarrows V)=(V\times_XV\righttwoarrows V)\to(U\times_XU\righttwoarrows U)$ induces a map of stacks $V/G\to X$.
To see that this is an open embedding, let $V^+\subseteq U$ denote the orbit of $V$ under the morphisms $U\times_XU\righttwoarrows U$.
Since the projections $U\times_XU\righttwoarrows U$ are \'etale, it follows that $V^+\subseteq U$ is open, and hence $[V^+\times_XV^+\righttwoarrows V^+]\to[U\times_XU\righttwoarrows U]$ is an open embedding of stacks; denote by $Z\subseteq X$ this open substack, so $V\to Z$ is an \'etale atlas.
Thus the topological groupoid $V\times_XV=V\times_ZV\righttwoarrows V$ presents $Z$, so $V/G\to X$ is an open embedding as desired.

Let us now show that for any orbispace $X$, its coarse space $\left|X\right|$ is Hausdorff.
We saw earlier that $\left|X\right|$ is the quotient of $U$ by the image of $U\times_XU\to U\times U$ (which is an equivalence relation).
This equivalence relation is closed since $U\times_XU\to U\times U$ is proper, so $\left|X\right|$ is Hausdorff provided the quotient map $U\to\left|X\right|$ is open.
Now openness of $U\to\left|X\right|$ does not depend on which atlas $U\to X$ we take: there are \'etale maps $U\leftarrow U\times_XU'\to U'$ over $\left|X\right|$, which means that $U\to\left|X\right|$ is open iff $U'\to\left|X\right|$ is open.
Moreover, openness of $U\to\left|X\right|$ can be checked locally on $\left|X\right|$, so we may assume without loss of generality that $X=V/G$.
Hence it is enough to note that the quotient map $V\to\left|V/G\right|$ (induced by the canonical \'etale atlas $V\to V/G$) is open.
Thus $\left|X\right|$ is Hausdorff.

Finally, let us show that if $\left|X\right|$ is Hausdorff and there is an open cover $\{V_i/G_i\subseteq X\}_i$, then $X$ is an orbispace.
The maps $V_i\to V_i/G_i$ are representable \'etale, so $U:=\bigsqcup_iV_i\to X$ is an \'etale atlas.
To show that the diagonal $X\to X\times X$ is proper, it is equivalent to show that $U\times_XU\to U\times U$ is proper.
This reduces to showing that $V\times_XV'\to V\times V'$ is proper for any pair $V/G\hookrightarrow X\hookleftarrow V'/G'$.
Since $\left|X\right|$ is Hausdorff, the map $V\times_{\left|X\right|}V'\to V\times V'$ a closed embedding (as it is a pullback of the diagonal of $\left|X\right|$), so it follows that $V\times_XV'\to V\times V'$ is proper iff $V\times_XV'\to V\times_{\left|X\right|}V'$ is proper.
Now for the purposes of studying the latter map $V\times_XV'\to V\times_{\left|X\right|}V'$, we may as well shrink $V$, $V'$, and $X$ so that $V/G=X=V'/G'$.
Now the diagonal of $V/G=V'/G'=X$ is proper, hence so is $V\times_XV'\to V\times V'$.
\end{proof}

\begin{corollary}\label{hausdorffatlas}
Every orbispace $X$ has an \'etale atlas $U\to X$ with $U$ Hausdorff; equivalently, every \'etale atlas $U\to X$ has $U$ locally Hausdorff.
\end{corollary}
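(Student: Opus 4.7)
The plan is to apply Proposition \ref{orbispacelocal} to obtain a cover $\{V_i/G_i\subseteq X\}_i$ with each $V_i$ Hausdorff and each $G_i$ finite discrete, and then to prove both formulations directly.

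For the existence of a Hausdorff \'etale atlas, I take $U:=\bigsqcup_iV_i$. Each $V_i\to V_i/G_i$ is \'etale and surjective because the source and target maps of the action groupoid $G_i\times V_i\righttwoarrows V_i$ are \'etale when $G_i$ is discrete; composing with the open inclusions $V_i/G_i\hookrightarrow X$ and then taking the disjoint union produces a representable \'etale surjection $U\to X$, and $U$ is Hausdorff as a disjoint union of Hausdorff spaces.

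For the local Hausdorffness of an arbitrary \'etale atlas $f\colon U\to X$, I work pointwise. Given $u\in U$, choose some $V/G\hookrightarrow X$ from the cover above whose image in $\left|X\right|$ contains $f(u)$. The fiber product $P:=U\times_XV$ is representable (since $f$ is), and its two projections $P\to U$ and $P\to V$ are \'etale as pullbacks of the \'etale maps $V\to X$ and $f$ respectively. Since $V\to V/G$ is surjective, there exists a lift $\tilde u\in P$ of $u$; shrinking a neighborhood of $\tilde u$ using \'etaleness of both projections yields an open $\tilde W\subseteq P$ on which each projection restricts to an open inclusion. Then $\tilde W$ identifies with an open subset of the Hausdorff space $V$, hence is itself Hausdorff, and maps homeomorphically onto an open neighborhood of $u$ in $U$, proving local Hausdorffness at $u$.

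I do not anticipate a significant obstacle. The only points requiring care are that $P$ is representable with \'etale projections (immediate from the fact that representability and \'etaleness are preserved under pullback) and that a lift $\tilde u$ of $u$ exists (using surjectivity of $V\to V/G$ together with $f(u)\in V/G\subseteq X$). The stated equivalence of the two phrasings then follows automatically: Hausdorff implies locally Hausdorff, and the disjoint-union construction above always realizes a Hausdorff atlas to witness the first formulation whenever the second one is in force.
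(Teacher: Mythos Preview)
Your proposal is correct and follows essentially the same approach as the paper. The paper's proof also constructs the Hausdorff atlas as $\bigsqcup_i V_i$ from Proposition~\ref{orbispacelocal}, and then argues that local Hausdorffness transfers between any two \'etale atlases $U,U'$ via the surjective \'etale projections $U\leftarrow U\times_X U'\to U'$; your pointwise argument with $P=U\times_X V$ is precisely this transfer principle spelled out in detail for the specific Hausdorff atlas already constructed.
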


\begin{proof}
By Proposition \ref{orbispacelocal} there is an open cover $\{V_i/G_i\subseteq X\}_i$ with $V_i$ Hausdorff, so $U:=\bigsqcup_iV_i\to X$ is an \'etale atlas with $U$ Hausdorff.
Now for any two \'etale atlases $U,U'\to X$, consideration of the surjective \'etale maps $U\leftarrow U\times_XU'\to U'$ shows that $U$ is locally Hausdorff iff $U'$ is locally Hausdorff.
Given any \'etale atlas $U\to X$ with $U$ locally Hausdorff and any open cover $\{U_i\subseteq U\}_i$ with $U_i$ Hausdorff, the disjoint union $U':=\bigsqcup_iU_i\to X$ is an \'etale atlas with $U'$ Hausdorff.
\end{proof}

\begin{corollary}\label{atlasrefine}
For any \'etale atlas $U\to X$ on an orbispace, there exists an open cover $\{V_i/G_i\subseteq X\}_i$ as in Proposition \ref{orbispacelocal} such that each map $V_i\to X$ factors through an open embedding $V_i\to U$.
\end{corollary}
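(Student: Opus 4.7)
The plan is to extract Corollary \ref{atlasrefine} by a careful reading of the proof of Proposition \ref{orbispacelocal}. That proof starts with an arbitrary étale atlas $U \to X$ and, around each $u \in U$, constructs an open neighborhood $V \subseteq U$ of $u$ on which the restricted groupoid $V \times_X V \righttwoarrows V$ is isomorphic to the action groupoid of a finite group $G_u \acts V$, with $V/G_u \to X$ an open inclusion. The crucial point, which is not explicit in the statement of that proposition but is visible from its proof, is that $V$ is built as an open subset of $U$, and the chart map $V \to V/G_u \hookrightarrow X$ is tautologically the composition $V \hookrightarrow U \to X$.

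My plan is therefore to simply apply the construction of Proposition \ref{orbispacelocal} to the given étale atlas $U \to X$ itself. For each $u \in U$ this yields a chart $V_u \subseteq U$ with $V_u/G_u \hookrightarrow X$ open and with $V_u$ Hausdorff (the Hausdorffness coming from the factorization of the diagonal $V_u \to V_u \times V_u$ through the proper map $V_u \times_X V_u \to V_u \times V_u$, exactly as in the proof of Proposition \ref{orbispacelocal}). The family $\{V_u/G_u \subseteq X\}_{u \in U}$ covers $X$ because $U \to X$ is surjective, and each chart map factors through the required open inclusion $V_u \hookrightarrow U$ by construction.

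There is essentially no new obstacle here: the entire content of the corollary is the observation that the charts produced by Proposition \ref{orbispacelocal} are, by construction, open subsets of whichever étale atlas one starts with. The only care required is to avoid switching to a different atlas partway through the argument, as happens for example in the proof of Corollary \ref{hausdorffatlas}.
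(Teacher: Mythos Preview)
Your proposal is correct and coincides exactly with the alternative proof the paper itself offers in its final sentence: the charts built in the proof of Proposition \ref{orbispacelocal} are already open subsets of whatever \'etale atlas one starts from, so applying that construction to the given $U\to X$ gives the result immediately.

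For comparison, the paper's primary argument takes a slightly different route: it starts with an \emph{arbitrary} cover $\{V_i/G_i\subseteq X\}_i$ (not necessarily obtained from $U$), then refines each $V_i$ twice---first so that the \'etale surjection $U\times_XV_i\to V_i$ admits a section (giving maps $V_i\to U$), and second so that these maps become open inclusions. Your approach is more direct and conceptually cleaner, since it avoids the two refinement steps; the paper's primary argument has the minor advantage of being self-contained at the level of statements (it uses only the conclusion of Proposition \ref{orbispacelocal}, not its proof), but the paper itself acknowledges your route as the more natural one.
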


\begin{proof}
Let $U\to X$ be given and fix any open cover $\{V_i/G_i\subseteq X\}_i$ as in Proposition \ref{orbispacelocal}.
Since $U\times_XV_i\to V_i$ is \'etale, it admits local sections, and hence by replacing each $V_i$ with an open cover of itself, we may assume that each projection $U\times_XV_i\to V_i$ admits a section (which is thus an open embedding).
Now the resulting maps $V_i\to U$ are \'etale by the factorization $V_i\to U\times_XV_i\to U$, so by again replacing each $V_i$ with an open cover, we may assume they are open embeddings.

Alternatively, we could note that the open cover $\{V_i/G_i\subseteq X\}_i$ produced by the proof of Proposition \ref{orbispacelocal} is in fact of the desired form.
\end{proof}

\begin{corollary}\label{criterionrepresentability}
A map of orbispaces $X\to Y$ is representable iff it is injective on isotropy groups.
In particular, an orbispace $X$ is a space iff it has trivial isotropy.
\end{corollary}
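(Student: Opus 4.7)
The plan is to prove the main equivalence and then deduce the ``in particular'' clause by applying it to $X \to *$: injectivity of $G_x \to 1$ forces $G_x = 1$, while representability of a map to a point is the statement that the source is a space.

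For the forward direction I would compute the fiber over a point directly. Let $x \in X$ have image $y \in Y$, and consider $y : * \to Y$. Representability of $f$ makes $F := X \times_Y *$ a topological space. A point of $F$ lying over $x$ is a choice of isomorphism $\alpha : f(x) \xrightarrow{\sim} y$ in $Y$, and unwinding the $2$-categorical fiber product, an automorphism of $(x,\alpha)$ is a pair $(\phi, \psi) \in G_x \times \Aut(*)$ with $\alpha \circ f(\phi) = \psi \circ \alpha$. Since $\Aut(*) = 1$, this forces $f(\phi) = 1 \in G_y$, so the automorphism group is $\ker(G_x \to G_y)$. Triviality of isotropy in the space $F$ then gives injectivity.

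For the reverse direction the strategy is to reduce to a local model and use that free finite group actions are locally trivial. Using Proposition \ref{orbispacelocal} and Corollary \ref{atlasrefine} I would cover $Y$ by opens $V_j/G_j$ with $V_j$ Hausdorff and $G_j$ finite, and then cover $X$ compatibly by opens $W_i/H_i$ each of which maps into some $V_j/G_j$. Because representability of a map descends along maps admitting local sections (Lemma \ref{repdescend}), and because representability of a stack can be checked on an open cover and then glued along overlaps, it suffices to treat a single local model $f : W/H \to V/G$ with $H, G$ finite discrete, $V, W$ Hausdorff, and injective on isotropy.

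In this local case I would apply Lemma \ref{repdescend} once more to the étale atlas $V \to V/G$, reducing to showing that the stack $W/H \times_{V/G} V$ is representable. Writing $P := W \times_{V/G} V$ for the pullback of the $G$-torsor $V \to V/G$ along $W \to V/G$, we see that $P$ is a Hausdorff topological space carrying a lift of the $H$-action on $W$, and $W/H \times_{V/G} V = P/H$. Unwinding the $2$-categorical fiber product identifies the isotropy of a point $p \in P/H$ lying over $w \in W$ with $\ker(H_w \to G_{f(w)})$, which vanishes by hypothesis. Hence $H$ acts freely on the Hausdorff space $P$; since $H$ is finite, the action is locally trivial (each $H$-orbit has an invariant neighborhood of the form $H \times Z$ built from disjoint separating neighborhoods of its points), so by the discussion of locally trivial actions preceding Definition \ref{separatedorbsipacedefn} the stack quotient $P/H$ coincides with its coarse space and is a topological space. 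The main obstacle is the $2$-categorical isotropy computation for $P/H$, which is what actually converts injectivity on isotropy for $f$ into freeness of the $H$-action; once that identification is secured the remaining topological step is standard.
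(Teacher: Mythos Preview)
Your argument is correct, and it reaches the same endpoint as the paper (a free action of a finite group on a Hausdorff space is locally trivial, so its stack quotient is a space), but the route is organized differently. The paper pulls back $X$ along a Hausdorff \'etale atlas $Y'\to Y$, then spends most of its effort verifying that $X'=X\times_YY'$ is again an orbispace (checking the diagonal is separated and proper via the factorization $X'\to X'\times_XX'\to X'\times X'$); once that is done, the proof reduces to the ``in particular'' clause, which is established first and used as a lemma. You instead localize on \emph{both} $X$ and $Y$ to reach an explicit model $W/H\to V/G$, identify $(W/H)\times_{V/G}V$ with $P/H$ for $P=W\times_{V/G}V$, and compute the $H$-stabilizers directly; the ``in particular'' clause then falls out as a consequence rather than serving as input. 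Your approach trades the abstract diagonal check for a concrete $2$-categorical stabilizer computation, which is arguably more elementary; the paper's approach has the virtue of isolating the clean intermediate statement ``orbispace with trivial isotropy $=$ space'' as a standalone fact. One small remark: your invocation of Corollary~\ref{atlasrefine} is not really needed---Proposition~\ref{orbispacelocal} applied to the open substacks $f^{-1}(V_j/G_j)\subseteq X$ already produces the compatible charts $W_i/H_i$.
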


\begin{proof}
Any representable morphism of stacks is injective on isotropy groups (just test against points).
Thus we are left with showing that a map of orbispaces $X\to Y$ which is injective on isotropy groups is representable.

Since representability descends under maps admitting local sections, it suffices to show that the fiber product $X'=X\times_YY'$ is representable for some \'etale atlas $Y'\to Y$.
Note that $X'$ has trivial isotropy since $Y'$ has trivial isotropy and $X'\to Y'$ is injective on isotropy groups (being a pullback of $X\to Y$).

We claim that $X'$ is an orbispace, provided we take $Y'$ to be Hausdorff (which we can by Corollary \ref{hausdorffatlas}).
The pullback of an \'etale atlas $U\to X$ is an \'etale atlas $U'\to X'$ (since $X'\to X$ is representable, being a pullback of $Y'\to Y$).
The diagonal of $X'$ is the composition $X'\to X'\times_XX'\to X'\times X'$.
The second map $X'\times_XX'\to X'\times X'$ is proper as it is a pullback of $X\to X\times X$.
To analyze the first map $X'\to X'\times_XX'$, note that it pulls back to $U'\to U'\times_UU'$ under the map $U'\times_UU'\to X'\times_XX'$ which admits local sections (being a pullback of $U\to X$).
Since $Y\to Y\times Y$ is separated, its pullback $Y'\times_YY'\to Y'\times Y'$ is also separated, which implies each projection $Y'\times_YY'\righttwoarrows Y'$ is separated since $Y'$ is Hausdorff, which implies $Y'\to Y$ is separated.
Hence its pullback $U'\to U$ is separated, so $U'\to U'\times_UU'$ is a closed embedding, hence proper.
Thus $X'\to X'\times_XX'$ is proper, and we conclude that $X'$ is an orbispace.

We are thus reduced to showing that an orbispace $Z$ with trivial isotropy is a space.
By Proposition \ref{orbispacelocal}, we know that $Z$ is given locally by $V/G$ for $V$ Hausdorff and $G$ finite discrete acting freely (since $Z$ has trivial isotropy).
Free actions $G\acts V$ with $V$ Hausdorff and $G$ finite are locally trivial, so we conclude that the map $Z\to\left|Z\right|$ is an equivalence.
Alternatively, we could note that the chart $V/G$ near a given $x\in X$ constructed in the proof of Proposition \ref{orbispacelocal} in fact satisfies $G=G_x$ by definition.
\end{proof}

\section{Coverings and nerves}\label{covernerve}

We show how Theorem \ref{mainsimplicial} implies Theorem \ref{maingeneral}.
It is enough to show that a given orbispace admits a representable map to a simplicial complex of groups; this is Proposition \ref{maptosimplicial}.
This simplicial complex of groups is basically just the nerve of a suitable open cover, however its construction is somewhat more delicate than one might initially expect.

A \emph{sieve} on a topological space $X$ is a subset $S\subseteq 2^X$ consisting of open sets such that $U'\subseteq U\in S$ implies $U'\in S$.
A \emph{covering sieve} on $X$ is a sieve $S$ such that $\bigcup_{U\in S}U=X$.
An open cover $\{U_i\subseteq X\}_i$ is said to \emph{generate} the covering sieve on $X$ consisting of those open sets which are contained in some $U_i$.

\begin{definition}
A \emph{connection sieve} on a map of spaces $f:X\to Y$ is a covering sieve $S$ on $X$ such that (1) for $U\in S$, the composition $U\to X\to Y$ is an open embedding, and (2) for $U,V\in S$ with $f(U)=f(V)$, either $U=V$ or $U\cap V=\varnothing$.
\end{definition}

Note that for sieves satisfying condition (1), condition (2) is equivalent to condition (2$'$) for $U,V\in S$ either $U\cap V=\varnothing$ or $f(U\cap V)=f(U)\cap f(V)$.
If $S'\subseteq S$ is an inclusion of covering sieves and $S$ is a connection sieve on $X\to Y$, then so is $S'$.
In particular, if $S$ and $S'$ are connection sieves on $X\to Y$ then so is $S\cap S'$.
To check that an open cover $\{U_i\subseteq X\}_i$ generates a connection sieve, it is enough to check axioms (1) and (2$'$) for the open sets $U_i$.

\begin{definition}
A map $X\to Y$ is called \emph{strongly \'etale} iff it admits a connection sieve.
\end{definition}

Open embeddings are strongly \'etale, and strongly \'etale maps are separated and \'etale (however the converse is false by Example \ref{nonstronglyetale} below).
Being strongly \'etale is preserved under pullback (take the connection sieve generated by the pullback of the original connection sieve), and the class of strongly \'etale maps is closed under composition (a connection sieve $S_{X/Z}$ for a composition $X\to Y\to Z$ is given by those elements of a fixed connection sieve $S_{X/Y}$ for $X\to Y$ whose image lies in a fixed connection sieve $S_{Y/Z}$ for $Y\to Z$).
A disjoint union $\bigsqcup_iX_i\to\bigsqcup_iY_i$ of strongly \'etale maps $X_i\to Y_i$ is strongly \'etale (take disjoint union of connection sieves), and the projection $A\times Y\to Y$ is strongly \'etale for any discrete space $A$.
Being strongly \'etale is \emph{not} local on the target, as shown by the following example.

\begin{example}\label{nonstronglyetale}
Let us construct a finite covering space which is not strongly \'etale.
Let $Z=\{1,\frac 12,\frac 14,\frac 18,\cdots\}\cup\{0\}\subseteq\RR$ with the subspace topology.
Every double cover of $Z$ or $Z\setminus 0$ is trivial.
A given double cover of $Z\setminus 0$ has, however, many distinct extensions to $Z$, indexed by functions $Z\setminus 0\to\ZZ/2$ modulo those functions which extend continuously to zero.

Now suppose given a double cover of $Z$ together with a connection sieve on it.
The restriction of this data to $Z\setminus 0$ remembers the extension of the double cover to $Z$ (use the elements of the connection sieve projecting to open sets of the form $Z\cap(0,\varepsilon)$).

Let $\tilde Z^+_\alpha\to Z^+$ denote the double cover obtained by gluing two copies of $Z\sqcup Z\to Z$ over $Z\setminus 0$ via some map $\alpha:Z\setminus 0\to\ZZ/2$.
If $\tilde Z^+_\alpha\to Z^+$ is strongly \'etale, then we can take any connection sieve on it, restrict to the common $Z\setminus 0\subseteq Z^+$, and deduce that the two extensions of the double cover of $Z\setminus 0$ to $Z$ coincide, in other words that $\alpha$ extends continuously to zero.
Hence the double cover $\tilde Z^+_\alpha\to Z^+$ is strongly \'etale iff $\alpha$ extends continuously to zero.
\end{example}

Recall that a topological space is called \emph{paracompact} iff every open cover admits a locally finite refinement \cite{dieudonne}.
If $X$ is paracompact and Hausdorff, then there exists a \emph{partition of unity} subordinate to any given locally finite open cover $\{U_i\subseteq X\}_i$, namely functions $f_i:X\to\RR_{\geq 0}$ with $\supp f_i\subseteq U_i$ such that $\sum_if_i\equiv 1$ (recall that the support $\supp f$ of a function $f:X\to\RR_{\geq 0}$ is by definition the complement of the largest open set over which $f$ vanishes identically).

\begin{lemma}\label{connectionparacompact}
If $Y$ is paracompact Hausdorff, then a map $X\to Y$ is strongly \'etale iff there exists an open cover $\{U_i\subseteq Y\}_i$ such that each map $X\times_YU_i\to U_i$ is strongly \'etale.
\end{lemma}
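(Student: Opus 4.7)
The plan is to handle the easy direction immediately and then construct the global connection sieve by a paracompactness argument. The forward direction is automatic: if $S$ is a connection sieve on $f:X\to Y$, then $\{U\in S:U\subseteq f^{-1}(U_i)\}$ is a connection sieve on $X\times_YU_i\to U_i$, since strong \'etaleness is preserved under pullback.

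For the converse, I would use paracompactness to glue the given local connection sieves $S_i$ on $X\times_YU_i\to U_i$ into a single global one. First refine $\{U_i\}$ to a locally finite open cover $\{V_\alpha\}_{\alpha\in A}$ with $V_\alpha\subseteq U_{i(\alpha)}$ for some assignment $\alpha\mapsto i(\alpha)$, and then, using that paracompact Hausdorff spaces are normal, apply the shrinking lemma to produce an open cover $\{W_\alpha\}$ of $Y$ with $\overline{W_\alpha}\subseteq V_\alpha$. Let $S_\alpha$ denote the pullback of $S_{i(\alpha)}$ to a connection sieve on $X\times_YV_\alpha\to V_\alpha$. My candidate global connection sieve $S$ on $f:X\to Y$ consists of those open $U\subseteq X$ such that (a) $U\to Y$ is an open inclusion, and (b) for every $\alpha\in A$ with $U\cap f^{-1}(\overline{W_\alpha})\neq\varnothing$, one has both $U\subseteq f^{-1}(V_\alpha)$ and $U\in S_\alpha$. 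The closed set $\overline{W_\alpha}$ plays the role of a ``witness region'': touching it commits $U$ to belong to the local sieve $S_\alpha$.

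The verifications then split into four parts. That $S$ is closed under subsets is immediate, since shrinking $U$ only shrinks $U\cap f^{-1}(\overline{W_\alpha})$ and each $S_\alpha$ is itself a sieve. That $S$ is covering uses local finiteness crucially: given $x\in X$, the set $A_x:=\{\alpha:f(x)\in\overline{W_\alpha}\}$ is finite (by local finiteness combined with $\overline{W_\alpha}\subseteq V_\alpha$) and nonempty (as $\{W_\alpha\}$ covers), and one may find an open neighborhood $N\ni f(x)$ contained in $V_\alpha$ for every $\alpha\in A_x$ and disjoint from $\overline{W_\beta}$ for every other $\beta$ (only finitely many $V_\beta$ meet $N$ at all, so this is a finite condition); then $\bigcap_{\alpha\in A_x}S_\alpha|_N$ is a finite intersection of connection sieves on $X\times_YN\to N$, hence itself a connection sieve, so contains an open $U\ni x$, which satisfies~(b) by construction of $N$. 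Condition~(1) of a connection sieve is just~(a). For condition~(2), if $U,V\in S$ with $f(U)=f(V)$ share a point $x$, then choosing any $\alpha$ with $f(x)\in W_\alpha$ places both $U$ and $V$ in $S_\alpha$, and the connection sieve property of $S_\alpha$ forces $U=V$.

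The main obstacle I anticipate is exactly the global version of condition~(2): a naive $S=\bigcup_\alpha S_\alpha$ would fail because elements of distinct $S_\alpha$ may share image without being equal or disjoint. The shrinking $\overline{W_\alpha}\subseteq V_\alpha$ is precisely what commits each $U\in S$ to every local sieve whose witness region it meets, so that any two overlapping elements of $S$ share a common $S_\alpha$ in which the local~(2) axiom applies.
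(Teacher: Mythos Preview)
Your argument is correct, and it takes a genuinely different route from the paper's. The paper also refines to a locally finite cover, but instead of invoking the shrinking lemma it uses a partition of unity $\{f_i\}$ to build a new cover $\{V_I\}$ indexed by nonempty finite subsets $I$ of the index set, with $V_I=\{y:\min_{i\in I}f_i(y)>\max_{i\notin I}f_i(y)\}\subseteq\bigcap_{i\in I}U_i$ and $V_I\cap V_J=\varnothing$ unless $I\subseteq J$ or $J\subseteq I$. The global sieve is then simply the union over $I$ of those open sets lying over $V_I$ and belonging to $\bigcap_{i\in I}S_i$; the chain condition on the $V_I$'s is what forces two overlapping elements to share a common $S_i$.

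Your ``witness region'' construction achieves the same end by a different mechanism: rather than arranging the base cover so that overlaps are totally ordered, you leave the cover alone and impose the commitment at the level of the sieve itself, via the closed cores $\overline{W_\alpha}$. Both arguments ultimately need the same thing---that any two overlapping members of the global sieve land in a common local sieve---and both get it from paracompactness plus Hausdorff. Your version is arguably more direct and avoids the combinatorial overhead of the $V_I$-indexing; on the other hand, the paper's $V_I$-construction is a reusable device that reappears later (in the proof of Corollary~\ref{enoughvectorbundles}), so there is some economy in introducing it here.
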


\begin{proof}
Fix an open cover $\{U_i\subseteq Y\}_i$ and connection sieves $S_i$ on $X\times_YU_i\to U_i$.
Since $Y$ is paracompact, we may assume that our open cover $\{U_i\subseteq Y\}_i$ is locally finite.
Using a partition of unity subordinate to this open cover, we may find another open cover $\{V_I\subseteq Y\}_I$ indexed by non-empty finite subsets $I$ of the original index set, such that $V_I\subseteq\bigcap_{i\in I}U_i$ and $V_I\cap V_J=\varnothing$ unless $I\subseteq J$ or $J\subseteq I$ (explicitly, we may take $V_I$ to be the locus where $\min_{i\in I}f_i>\max_{i\notin I}f_i$).
We may now define a connection sieve on $X\to Y$ as the union over $I$ of $2^{X\times_YV_I}\cap\bigcap_{i\in I}S_i$.
\end{proof}

An orbispace will be called paracompact (resp.\ coarsely finite-dimensional, $d$-dimensional) iff its coarse space is.

\begin{proposition}\label{atlasconnection}
Every paracompact orbispace $X$ has an \'etale atlas $U\to X$ for which the projections $U\times_XU\righttwoarrows U$ are strongly \'etale.
In fact, there exists such $U$ of the form $U=\bigsqcup_iV_i$ for an open cover $\{V_i/G_i\subseteq X\}_i$ as in Proposition \ref{orbispacelocal}.
\end{proposition}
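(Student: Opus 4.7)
The plan is to take $U = \bigsqcup_i V_i$ for a suitably chosen open cover $\{V_i/G_i\}$ of the form given by Proposition \ref{orbispacelocal}. Using paracompactness of $\left|X\right|$, I would refine the initial cover to a locally finite one with each $V_i$ paracompact Hausdorff. Since $U \times_X U = \bigsqcup_{i,j} V_i \times_X V_j$ and strong étaleness is preserved under disjoint unions, it suffices to construct a connection sieve on each projection $V_i \times_X V_j \to V_i$.

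Each such map is étale as a pullback of the étale map $V_j \to V_j/G_j \hookrightarrow X$, and $V_i \times_X V_j$ is Hausdorff since $V_i \times_X V_j \to V_i \times V_j$ is a pullback of the separated diagonal $X \to X \times X$ (and any separated map into a Hausdorff space has Hausdorff source). The key observation is that the fibers of $V_i \times_X V_j \to V_i$ have \emph{constant} size $|G_j|$ over the open subset $N := \{p \in V_i : [p] \in V_j/G_j\}$ (and are empty outside $N$): for $p \in N$ with any preimage $w_0 \in V_j$ of $[p]$, the fiber consists of $|G_j|/|G_{j,w_0}|$ points $w$ in the $G_j$-orbit of $w_0$, each contributing $|\Aut_X([p])| = |G_{j,w_0}|$ isomorphisms $[p]\to[w]$ in $X$. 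Combined with étaleness and Hausdorffness, this exhibits $V_i \times_X V_j \to N$ as a finite covering space of degree $|G_j|$: around any $p \in N$, each of the $|G_j|$ points in the fiber admits a local section, and by Hausdorff separation these can be shrunk to pairwise disjoint sheets covering a common neighborhood of $p$.

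Finite covering spaces admit connection sieves locally from their trivializations; by Lemma \ref{connectionparacompact} applied with target $V_i$ (arranged to be paracompact Hausdorff), these local sieves patch into a global connection sieve on $V_i \times_X V_j \to N$. Postcomposing with the open inclusion $N \hookrightarrow V_i$ (itself strongly étale) gives strong étaleness of $V_i \times_X V_j \to V_i$; disjoint union over $i$ and $j$ then completes the proof. The hardest part is ensuring each $V_i$ is paracompact Hausdorff so that Lemma \ref{connectionparacompact} applies; this requires care in exploiting paracompactness of $\left|X\right|$ (for instance, by shrinking so that the images $|V_i/G_i|$ are $F_\sigma$ or relatively compact open subsets of $\left|X\right|$), whereas the remaining fiber-counting and local-trivialization arguments are routine.
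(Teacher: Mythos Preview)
Your approach is essentially correct but follows a genuinely different route from the paper. Both arguments identify $V_i\times_X V_j\to V_i$ as a finite covering of degree $|G_j|$ over its open image $N$ (the paper obtains this by pullback from $V_j\to V_j/G_j$; your explicit fiber count is equivalent). The divergence is in how to globalize the local connection sieves.

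You arrange the charts themselves to be paracompact and invoke Lemma~\ref{connectionparacompact}. One small imprecision: the trivializing open sets cover only $N$, not all of $V_i$, so the lemma must really be applied with target $N$ rather than $V_i$. Your $F_\sigma$ suggestion does handle this (if every $|V_k/G_k|$ is $F_\sigma$ in $|X|$ then so is each pairwise intersection, hence paracompact, and then $N$ is paracompact via Lemma~\ref{paracompactquotient}), but this imports the nontrivial fact that $F_\sigma$ subsets of paracompact Hausdorff spaces are paracompact, together with a lemma that appears only later in the paper.

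The paper avoids asking any chart to be paracompact. The key observation is that since $G_i$ is finite and $V_i$ Hausdorff, each $G_i$-\emph{orbit} (not merely each point) has a neighborhood over which the finite cover trivializes, so the trivializing opens may be taken to be preimages of open sets in $|X|$. One then covers $|X|$ by such sets together with $|X|\setminus(\supp f_i\cup\supp f_j)$ for a fixed partition of unity $\{f_k\}$ on $|X|$, and chooses a subordinate partition of unity on $|X|$. The patching from the \emph{proof} of Lemma~\ref{connectionparacompact}---which needs only a locally finite cover with subordinate partition of unity, not paracompactness of the target as such---then runs with these pulled-back functions and yields strong \'etaleness after restricting to $V_i^0=\{f_i>0\}$; the final atlas is $\bigsqcup_i V_i^0$. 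Your route is more direct once paracompactness of $N$ is secured; the paper's route uses only the given paracompactness of $|X|$ and no auxiliary general-topology facts.
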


\begin{proof}
Fix an open cover $\{V_i/G_i\hookrightarrow X\}_i$ as in Proposition \ref{orbispacelocal}.
Since $\left|X\right|$ is paracompact, we may shrink the spaces $V_i$ ($G_i$-equivariantly) so as to ensure that the associated open cover $\{\left|V_i/G_i\right|\subseteq\left|X\right|\}_i$ of coarse spaces is locally finite.
Choose a partition of unity $\{f_i:\left|X\right|\to\RR_{\geq 0}\}_i$ subordinate to the open cover $\{\left|V_i/G_i\right|\subseteq\left|X\right|\}_i$.
Let $V_i^0\subseteq V_i$ denote the open subset where $f_i>0$.
We will show that the \'etale atlas $U:=\bigsqcup_iV_i^0$ has the desired property.

It suffices to show that for any pair of open embeddings $V/G\hookrightarrow X\hookleftarrow W/H$ and pair of functions $f_V,f_W:\left|X\right|\to\RR_{\geq 0}$ supported inside $\left|V/G\right|$ and $\left|W/H\right|$, respectively, the projection $W^0\times_XV^0\to V^0$ is strongly \'etale.
We begin by considering the map $W\times_XV\to V$, which is a finite covering space over its open image $W/H\times_XV\subseteq V$ (by pullback from $W\to W/H$, which is a finite covering space by descent from its pullback $H\times W\to W$).
Thus every point of $W/H\times_XV$ has a neighborhood over which $W\times_XV\to V$ is strongly \'etale.
Even better, since $V$ is Hausdorff and $G$ is finite, each $G$-orbit inside $W/H\times_XV$ has a neighborhood over which $W\times_XV\to V$ is strongly \'etale.
In other words, each point of $\left|W/H\right|\cap\left|V/G\right|\subseteq\left|X\right|$ has a neighborhood over which $W\times_XV\to V$ is strongly \'etale.

Now since $\left|X\right|$ is paracompact, there exists a locally finite open cover of $\left|X\right|$ by $\left|X\right|\setminus(\supp f_V\cup\supp f_W)$ together with open subsets $\{A_i\subseteq\left|W/H\right|\cap\left|V/G\right|\}_i$ over which $W\times_XV\to V$ is strongly \'etale.
Fix a partition of unity $g:\left|X\right|\to\RR_{\geq 0}$ supported inside $\left|X\right|\setminus(\supp f_V\cup\supp f_W)$ and $\{g_i:\left|X\right|\to\RR_{\geq 0}\}$ supported inside $A_i$, that is $g+\sum_ig_i\equiv 1$.
Now the patching procedure for connection sieves from the proof of Lemma \ref{connectionparacompact} shows that $W\times_XV\to V$ is strongly \'etale over the complement of $\supp g$.
In particular, it follows by restriction that $W^0\times_XV^0\to V^0$ is strongly \'etale.
\end{proof}

A \emph{simplicial complex} is a pair $X=(V,S)$ consisting of a set $V$ (``vertices'') and a set $S\subseteq 2^V\setminus\{\varnothing\}$ (``simplices'') of finite subsets of $V$ such that $S$ contains all singletons and $\varnothing\ne A\subseteq B\in S$ implies $A\in S$.
The \emph{star} $\st(X,\sigma)\subseteq X$ of a simplex $\sigma$ in a simplicial complex $X$ is the subcomplex consisting of all simplices $\tau\subseteq X$ with $\sigma\cup\tau\in S(X)$.

A map of simplicial complexes $X\to Y$ is a map of vertex sets $V(X)\to V(Y)$ which maps simplices to simplices (the image of an element of $S(X)$ is an element of $S(Y)$).
A map of simplicial complexes is called injective iff the map on vertex sets (hence also the map on simplices) is injective.
A map of simplicial complexes $f:X\to Y$ is called \emph{\'etale} (resp.\ \emph{locally injective}) iff the induced maps on stars $\st(X,\sigma)\to\st(Y,f(\sigma))$ are isomorphisms (resp.\ injective).
We will call a map of simplicial complexes $X\to Y$ \emph{sufficiently \'etale} iff every simplex $\sigma\subseteq Y$ (equivalently, every vertex) is the image of a simplex $\tau\subseteq X$ at which $X\to Y$ is \'etale (this is a useful weakening of the condition of being surjective and \'etale, which in the context of simplicial complexes is too strong).

The \emph{geometric realization} $\left\|X\right\|$ of a simplicial complex $X$ is the set of tuples $t\in\RR_{\geq 0}^{V(X)}$ with $\sum_vt_v=1$ such that $\{v:t_v>0\}\in S(X)$, topologized by declaring that the realization of the complete simplex on $k+1$ vertices has the usual topology and that a realization $\left\|X\right\|$ is given the strongest topology for which (the realization of) every map from a complete simplex to $X$ is continuous.
The geometric realization of an \'etale map of simplicial complexes is an \'etale map of spaces.

A \emph{locally injective simplicial complex groupoid} $M\righttwoarrows O$ consists of simplicial complexes $O$ and $M$ together with structure maps satisfying the axioms of a groupoid, where both maps $M\righttwoarrows O$ are locally injective.
Local injectivity of the two maps $M\righttwoarrows O$ implies that the natural map $\left\|M\times_OM\right\|\xrightarrow\sim\left\|M\right\|\times_{\left\|O\right\|}\left\|M\right\|$ is a homeomorphism, and thus the geometric realization $\left\|M\right\|\righttwoarrows\left\|O\right\|$ is a topological groupoid.
If $\partial O\subseteq O$ denotes the subcomplex consisting of those simplices $\sigma\subseteq O$ for which it is not the case that the first projection $M\to O$ is \'etale at every simplex $\tau\subseteq M$ mapped to $\sigma$ under the second projection, then the natural map $\left\|O\right\|\setminus\left\|\partial O\right\|\to[\left\|M\right\|\righttwoarrows\left\|O\right\|]$ is \'etale.
A locally injective simplicial complex groupoid $M\righttwoarrows O$ is called \emph{sufficiently \'etale} iff this map is surjective (equivalently, every vertex of $O$ is $M$-isomorphic to one not in $\partial O$).

The \emph{abstract simplex category} $\Simp$ has objects finite totally ordered sets and has morphisms weakly order preserving maps; every object of $\Simp$ is isomorphic to $[n]:=\{0<\cdots<n\}$ for a unique integer $n\geq 0$.
A simplicial object in a category $\C$ is a functor $\Simp^\op\to\C$, and the category of simplicial objects in $\C$ is denoted $\s\C$.
If $\C$ is complete (resp.\ cocomplete) then so is $\s\C$, and limits (resp.\ colimits) are calculated pointwise.

We will consider only simplicial sets (objects of the category $\s\Set$) and simplicial groupoids (objects of the category $\s\Grpd$).
A simplicial set (or groupoid) will be denoted $X_\bullet$, where $X_n$ is its set (or groupoid) of $n$-simplices.
We denote by $\Delta^n_\bullet\in\s\Set$ the standard $n$-simplex, given by $[m]\mapsto\Hom([m],[n])$.
The Yoneda lemma implies that $X_n=\Hom(\Delta^n_\bullet,X_\bullet)$.

A map of simplicial sets $X_\bullet\to Y_\bullet$ is called injective iff it is so levelwise (i.e.\ every $X_n\to Y_n$ is injective).
A map $X_\bullet\to Y_\bullet$ is called \'etale (resp.\ locally injective) iff for every map $[n]\to[m]$ in $\Simp$, the induced map $X_m\xrightarrow\sim Y_m\times_{Y_n}X_n$ is a bijection (resp.\ injective) (it is equivalent to impose this condition only for $n=0$).
A map $X_\bullet\to Y_\bullet$ is called \'etale (resp.\ locally injective) at a given $n$-simplex $\sigma$ of $X_\bullet$ iff $X_m\xrightarrow\sim Y_m\times_{Y_n}\{\sigma\}$ is a bijection (resp.\ injective) (this condition at a given $\sigma$ implies the same at any preimage of $\sigma$ under any structure map of $X_\bullet$).
A map $X_\bullet\to Y_\bullet$ is called sufficiently \'etale iff the $n$-simplices of $X_\bullet$ at which the map is \'etale surject onto the $n$-simplices of $Y_\bullet$ (it is equivalent to impose this condition only for $n=0$).
These notions generalize to maps of simplicial groupoids by replacing `injectivity' and `surjectivity' for maps of sets with `full faithfulness' and `essential surjectivity' for functors of groupoids.
These properties are all preserved under pullback and closed under composition.

The \emph{geometric realization} $\left\|X_\bullet\right\|$ of a simplicial set $X_\bullet$ is the colimit of $\Delta^n$ over all maps $\Delta^n_\bullet\to X_\bullet$.
Geometric realization is cocontinuous, and the natural map $\left\|\lim_\alpha(X_\bullet)_\alpha\right\|\to\lim_\alpha\left\|(X_\bullet)_\alpha\right\|$ is bijective, however it need not be a homeomorphism even for finite limits (for example, the case of binary products is discussed in \cite[Theorem 14.3, Remark 14.4]{maysimplicial} and \cite{whiteheadcwproduct,milnorcwproduct,brooketaylorcwproduct}).
The map $\left\|X_\bullet\times_{Y_\bullet}Z_\bullet\right\|\to\left\|X_\bullet\right\|\times_{\left\|Y_\bullet\right\|}\left\|Z_\bullet\right\|$ is a homeomorphism if at least one of the maps $X_\bullet\to Y_\bullet$ and $Z_\bullet\to Y_\bullet$ is locally injective.
Thus a locally injective simplicial set groupoid $M_\bullet\righttwoarrows O_\bullet$ determines a topological groupoid $\left\|M_\bullet\right\|\righttwoarrows\left\|O_\bullet\right\|$.

Let us now introduce the geometric realization $\left\|X_\bullet\right\|$ of any simplicial groupoid $X_\bullet$ which is \emph{\'etale}, meaning that it admits a locally injective sufficiently \'etale map $U_\bullet\to X_\bullet$ from a simplicial set $U_\bullet$.
For any simplicial groupoid $X_\bullet$ and any locally injective map $U_\bullet\to X_\bullet$, the pair of simplicial sets $U_\bullet\times_{X_\bullet}U_\bullet\righttwoarrows U_\bullet$ forms a locally injective simplicial set groupoid, whose geometric realization $\left\|U_\bullet\times_{X_\bullet}U_\bullet\right\|\righttwoarrows\left\|U_\bullet\right\|$ thus defines a topological stack.
The geometric realization of $X_\bullet$ is defined as this topological stack $[\left\|U_\bullet\times_{X_\bullet}U_\bullet\right\|\righttwoarrows\left\|U_\bullet\right\|]$ associated to any locally injective sufficiently \'etale map $U_\bullet\to X_\bullet$.

\begin{lemma}\label{etalesimplicialgroupoidrealize}
The geometric realization of an \'etale simplicial groupoid $X_\bullet$ is well defined.
\end{lemma}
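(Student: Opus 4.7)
The plan is to show that the stack $[\left\|U_\bullet\times_{X_\bullet}U_\bullet\right\|\righttwoarrows\left\|U_\bullet\right\|]$ does not depend, up to canonical equivalence, on the choice of locally injective sufficiently \'etale atlas $U_\bullet\to X_\bullet$. Given a second such atlas $U'_\bullet\to X_\bullet$, I would form the fiber product $W_\bullet:=U_\bullet\times_{X_\bullet}U'_\bullet$. Because both maps into $X_\bullet$ are locally injective (i.e.\ levelwise faithful into the simplicial groupoid $X_\bullet$), all automorphisms in the groupoid fiber collapse to identities, so $W_\bullet$ is again a simplicial set. The projections $W_\bullet\to U_\bullet$ and $W_\bullet\to U'_\bullet$ are then locally injective and sufficiently \'etale by pullback. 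Hence via the intermediate atlas $W_\bullet$, a two-step comparison reduces everything to the special case of a single locally injective sufficiently \'etale map of simplicial sets $V_\bullet\to U_\bullet$ sitting over $X_\bullet$, and showing that its two stack presentations coincide.

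The key topological input I would need is that a locally injective sufficiently \'etale map of simplicial sets $V_\bullet\to U_\bullet$ realizes to an \'etale surjection $\left\|V_\bullet\right\|\to\left\|U_\bullet\right\|$. \'Etaleness holds at each simplex of $V_\bullet$ where the map is combinatorially \'etale, since there the open star in the realization is carried homeomorphically onto its image; sufficient \'etaleness ensures that these \'etale open stars cover $\left\|U_\bullet\right\|$. In particular the realization admits local sections. The composition
\begin{equation*}
\left\|V_\bullet\right\|\to\left\|U_\bullet\right\|\to\bigl[\left\|U_\bullet\times_{X_\bullet}U_\bullet\right\|\righttwoarrows\left\|U_\bullet\right\|\bigr]
\end{equation*}
is then representable by Lemma \ref{repdescend} (the tautological atlas is representable and the first map is \'etale) and admits local sections. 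Using the cited fact that realization preserves fiber products as soon as one factor is locally injective, together with the local injectivity of $V_\bullet\to U_\bullet$, the self-fiber product of this composition over the target stack is
\begin{equation*}
\left\|V_\bullet\right\|\times_{[\left\|U_\bullet\times_{X_\bullet}U_\bullet\right\|\righttwoarrows\left\|U_\bullet\right\|]}\left\|V_\bullet\right\|=\left\|V_\bullet\times_{U_\bullet}(U_\bullet\times_{X_\bullet}U_\bullet)\times_{U_\bullet}V_\bullet\right\|=\left\|V_\bullet\times_{X_\bullet}V_\bullet\right\|,
\end{equation*}
which exhibits the canonical equivalence $[\left\|V_\bullet\times_{X_\bullet}V_\bullet\right\|\righttwoarrows\left\|V_\bullet\right\|]\xrightarrow\sim[\left\|U_\bullet\times_{X_\bullet}U_\bullet\right\|\righttwoarrows\left\|U_\bullet\right\|]$.

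I expect the main obstacle to be the verification that realization converts a locally injective sufficiently \'etale map of simplicial sets into an \'etale surjection of topological spaces, and that the realization fiber product formula is robust enough to apply in the setting above, where the relevant factor $U_\bullet\times_{X_\bullet}U_\bullet$ is formed over the simplicial groupoid $X_\bullet$ rather than over a simplicial set. Both points reduce to the locally-injective-factor fiber product lemma already cited in the excerpt, since $U_\bullet\times_{X_\bullet}U_\bullet$ is itself a simplicial set whose projections to $U_\bullet$ are locally injective (being pulled back from the locally injective map $U_\bullet\to X_\bullet$); modulo this technical plumbing, the remainder is formal groupoid bookkeeping.
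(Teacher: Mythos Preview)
Your approach is essentially the same as the paper's: both form the fiber product $W_\bullet=U_\bullet\times_{X_\bullet}U'_\bullet$ and reduce to showing that a locally injective sufficiently \'etale map of simplicial sets $W_\bullet\to U_\bullet$ induces an isomorphism of the associated topological stacks. The paper asserts this last implication in a single sentence, whereas you spell out the mechanism via local sections and the fiber product computation $\left\|W_\bullet\right\|\times_{[\cdots]}\left\|W_\bullet\right\|=\left\|W_\bullet\times_{X_\bullet}W_\bullet\right\|$; the only caveat is that your phrase ``realizes to an \'etale surjection'' is slightly too strong (the realization is \'etale only on the complement of the non-\'etale locus, as in the paper's proof of Lemma~\ref{etalesimplicialgroupoidorbispace}), but your actual argument only uses the weaker consequence that the map admits local sections, which is correct.
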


\begin{proof}
Let $U_\bullet,U'_\bullet\to X_\bullet$ be locally injective and sufficiently \'etale.
Let $U''_\bullet:=U_\bullet\times_{X_\bullet}U_\bullet'\to X_\bullet$, and consider the map of simplicial set groupoids $(U''_\bullet\times_{X_\bullet}U''_\bullet\righttwoarrows U''_\bullet)\to(U_\bullet\times_{X_\bullet}U_\bullet\righttwoarrows U_\bullet)$.
Since $U''_\bullet\to U_\bullet$ is locally injective and sufficiently \'etale, it follows that this map induces an isomorphism of topological stacks, and the same applies to $U'_\bullet$ in place of $U_\bullet$.
\end{proof}

\begin{lemma}\label{etalesimplicialgroupoidorbispace}
The geometric realization $\left\|X_\bullet\right\|$ of an \'etale simplicial groupoid $X_\bullet$ with finite isotropy is an orbispace.
\end{lemma}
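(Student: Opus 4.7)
The plan is to verify Definition \ref{separatedorbsipacedefn} directly. By construction $\|X_\bullet\|$ comes equipped with the representable atlas $\|U_\bullet\|\to\|X_\bullet\|$, which admits local sections; my task is to promote this to an \'etale atlas and to verify that the diagonal of $\|X_\bullet\|$ is separated and proper.

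For the \'etale atlas I would transcribe to the simplicial-set setting the construction recorded earlier for simplicial complex groupoids. Let $\partial U_\bullet\subseteq U_\bullet$ consist of those simplices $\sigma$ such that the first projection $U_\bullet\times_{X_\bullet}U_\bullet\to U_\bullet$ fails to be \'etale at some simplex mapping to $\sigma$ under the second projection. The same argument as in the complex case gives that $\|U_\bullet\|\setminus\|\partial U_\bullet\|\to\|X_\bullet\|$ is \'etale, and sufficient \'etaleness of $U_\bullet\to X_\bullet$ ensures that every $0$-simplex of $X_\bullet$ has an \'etale preimage lying outside $\partial U_\bullet$, so this restricted map remains surjective. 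This gives the required representable \'etale surjection.

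For the diagonal, since separation and properness descend along maps admitting local sections, it suffices to show that the pullback $\|U_\bullet\times_{X_\bullet}U_\bullet\|\to\|U_\bullet\|\times\|U_\bullet\|$ of the diagonal $\|X_\bullet\|\to\|X_\bullet\|\times\|X_\bullet\|$ along $\|U_\bullet\|\times\|U_\bullet\|\to\|X_\bullet\|\times\|X_\bullet\|$ is separated and proper (the latter admits local sections as a product of atlases). The underlying simplicial map $U_\bullet\times_{X_\bullet}U_\bullet\to U_\bullet\times U_\bullet$ has fibers which are either empty or torsors under automorphism groups in $X_\bullet$, hence finite by the finite-isotropy hypothesis. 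This is the step I expect to be the main obstacle, because geometric realization does not commute with arbitrary products of simplicial sets. The workaround is to localize to products of open stars of pairs of individual non-degenerate simplices of $U_\bullet$, where only finitely many simplices contribute and realization does commute with the product; on such neighborhoods the map of realizations becomes a finite disjoint union of open inclusions, which is both separated and proper, and these local pieces assemble into the desired global statement.
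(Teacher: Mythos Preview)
Your proposal is correct and follows the same overall architecture as the paper: construct the \'etale atlas as $\|U_\bullet\|\setminus\|\partial U_\bullet\|$, and check the diagonal by pulling back to $\|U_\bullet\times_{X_\bullet}U_\bullet\|\to\|U_\bullet\|\times\|U_\bullet\|$.

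The one place where the paper is cleaner is the diagonal. You anticipate difficulty with the failure of realization to commute with products and propose localizing to open stars; the paper avoids this entirely. For separation, it simply observes that geometric realizations of simplicial sets are Hausdorff, so any map out of $\|U_\bullet\times_{X_\bullet}U_\bullet\|$ is automatically separated. For properness, it argues directly from the two ingredients you already identified: the simplicial map $U_\bullet\times_{X_\bullet}U_\bullet\to U_\bullet\times U_\bullet$ has finite fibers (finite isotropy), and the two projections $U_\bullet\times_{X_\bullet}U_\bullet\righttwoarrows U_\bullet$ are locally injective. These together force the realized map to be proper, with no need to compare $\|U_\bullet\times U_\bullet\|$ and $\|U_\bullet\|\times\|U_\bullet\|$. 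Your open-star argument would also work, but it is more labor than the situation requires. For surjectivity of the \'etale atlas, the paper makes the argument a bit more explicit than you do: it descends surjectivity from the map $\|U_\bullet\times_{X_\bullet}U_\bullet\|\setminus\|U_\bullet\times_{X_\bullet}\partial U_\bullet\|\to\|U_\bullet\|$, which is visibly surjective by sufficient \'etaleness.
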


\begin{proof}
All geometric realizations are Hausdorff, so $\left\|U_\bullet\times_{X_\bullet}U_\bullet\right\|\to\left\|U_\bullet\right\|\times\left\|U_\bullet\right\|$ is separated, hence $\left\|X_\bullet\right\|$ has separated diagonal.
Since $X_\bullet$ has finite isotropy, the map $U_\bullet\times_{X_\bullet}U_\bullet\to U_\bullet\times U_\bullet$ has finite fibers, which combined with local injectivity of $U_\bullet\times_{X_\bullet}U_\bullet\righttwoarrows U_\bullet$ implies that $\left\|U_\bullet\times_{X_\bullet}U_\bullet\right\|\to\left\|U_\bullet\right\|\times\left\|U_\bullet\right\|$ is universally closed, hence $\left\|X_\bullet\right\|$ has universally closed diagonal.
We have thus shown that $\left\|X_\bullet\right\|$ has proper diagonal.

To construct an \'etale atlas for $\left\|X_\bullet\right\|$, let $\partial U_\bullet\subseteq U_\bullet$ denote the simplicial subset consisting of those simplices of $U_\bullet$ at which $U_\bullet\to X_\bullet$ is not \'etale.
Then $\left\|U_\bullet\right\|\setminus\left\|\partial U_\bullet\right\|\to\left\|X_\bullet\right\|$ is \'etale.
To see that it is surjective, it suffices to show that $\left\|U_\bullet\times_{X_\bullet}U_\bullet\right\|\setminus\left\|U_\bullet\times_{X_\bullet}\partial U_\bullet\right\|\to\left\|U_\bullet\right\|$ is surjective (note that surjectivity descends under surjective maps such as $\left\|U_\bullet\right\|\to\left\|X_\bullet\right\|$), and this follows since $U_\bullet\to X_\bullet$ is sufficiently \'etale.
\end{proof}

A simplicial complex $X$ gives rise to a simplicial set $b_\bullet X$ (its \emph{barycentric subdivision}) whose $n$-simplices are chains of simplices $\sigma_0\subseteq\cdots\subseteq\sigma_n\subseteq X$ (in other words, $b_\bullet X$ is the nerve of $S(X)$).
Barycentric subdivision preserves injectivity, local injectivity, \'etale, and sufficiently \'etale.
There is a natural identification of geometric realizations $\left\|X\right\|=\left\|b_\bullet X\right\|$.
Moreover, for a locally injective sufficiently \'etale simplicial complex groupoid $M\righttwoarrows O$, there is a natural identification $[\left\|M\right\|\righttwoarrows\left\|O\right\|]=\left\|[b_\bullet M\righttwoarrows b_\bullet O]\right\|$ (the simplicial groupoid $[b_\bullet M\righttwoarrows b_\bullet O]$ is \'etale since $M\righttwoarrows O$ is sufficiently \'etale).

A simplicial complex of groups $(Z,G)$ also admits a barycentric subdivision $b_\bullet(Z,G)$ which is a simplicial groupoid.
In fact, we will define $b_\bullet(Z,G)$ for any simplicial complex $Z$ equipped with a functor $G:S(Z)^\op\to\Grpd$ from the face poset to groupoids (a simplicial complex of groups $(Z,G)$ determines such a functor which sends $\sigma$ to $\BB G_\sigma$).
The groupoid of $n$-simplices in the barycentric subdivision $b_\bullet(Z,G)$ is now defined as the groupoid of functors from the category $0\to\cdots\to n$ to the category whose objects are pairs $\sigma\in S(Z)$ and $o\in G_\sigma$ and whose morphisms are inclusions $\sigma_1\subseteq\sigma_2$ covered by maps $o_1\to o_2|_{\sigma_1}$.
The barycentric subdivision of a simplicial complex of groups is \'etale, as can be seen as follows.
For any $\sigma\subseteq Z$ and $o\in G_\sigma$, we consider the functor $S(\st(Z,\sigma))^\op\to\Grpd$ given by $\tau\mapsto G_{\sigma\cup\tau}\times_{G_\sigma}\{o\}$.
Applying the nerve construction from just above to this functor, we obtain a simplicial set mapping to $b_\bullet(Z,G)$, which is the required locally injective map which is \'etale over $(\sigma,o)\in b_0(Z,G)$.
An essentially equivalent discussion (albeit without barycentrically subdividing) appears in \cite[12.24--12.25]{bridsonhaefliger}.
Since $b_\bullet(Z,G)$ is \'etale, it has a geometric realization $\left\|b_\bullet(Z,G)\right\|$ which we also write as $\left\|(Z,G)\right\|$.
When $G$ are finite groups (or, more generally, groupoids with finite isotropy), then the geometric realization $\left\|(Z,G)\right\|$ is an orbispace by Lemma \ref{etalesimplicialgroupoidorbispace}, and this is what we have been calling the orbispace presented by the simplicial complex of finite groups $(Z,G)$.

\begin{lemma}\label{MOgroup}
Let $M\righttwoarrows O$ be a locally injective simplicial complex groupoid with the following properties:
\begin{itemize}
\item The vertices of every simplex of $O$ are pairwise non-isomorphic via $M$.
\item If simplices $\sigma$ and $\sigma'$ of $O$ have vertex sets which are isomorphic via $M$, then $\sigma$ and $\sigma'$ are themselves isomorphic via $M$.
\end{itemize}
Then there is a simplicial complex of groups giving rise to the same simplicial groupoid as $M\righttwoarrows O$.
\end{lemma}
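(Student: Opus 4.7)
The plan is to construct $(Z,G)$ as a quotient of $M\righttwoarrows O$ by its own groupoid structure. Take $V(Z)$ to be the set of $M$-isomorphism classes of vertices of $O$, and declare a finite subset of $V(Z)$ to be a simplex of $Z$ iff it is the image of the vertex set of some simplex of $O$; hypothesis (1) ensures that each such vertex set injects into $V(Z)$, and hypothesis (2) ensures that every two lifts of a simplex of $Z$ to $O$ are $M$-isomorphic as simplices, so the set of lifts forms a single $M$-orbit.

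Pick once and for all a lift $\tilde\sigma\subseteq O$ for each simplex $[\sigma]\subseteq Z$, set $G_{[\sigma]}:=\Aut_M(\tilde\sigma)$, and for every inclusion $[\rho]\subseteq[\sigma]$ pick an $M$-morphism $\phi_{[\rho]\subseteq[\sigma]}:\tilde\rho\xrightarrow\sim\rho'$ onto a face $\rho'\subseteq\tilde\sigma$. Define the group homomorphism $G_{[\sigma]}\hookrightarrow G_{[\rho]}$ by $m\mapsto\phi^{-1}\cdot m|_{\rho'}\cdot\phi$, where $m|_{\rho'}$ is the unique face of $m$ (viewed as a simplex of $M$) with source $\rho'$; existence, uniqueness, and the fact that its target equals its source all follow from local injectivity of $M\to O$ together with hypothesis (1). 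Injectivity of the homomorphism $G_{[\sigma]}\hookrightarrow G_{[\rho]}$ is also immediate: an element of $\Aut_M(\tilde\sigma)$ is determined by its restriction to any face, because the star map $\st(M,m|_{\rho'})\to\st(O,\rho')$ is injective and $m$ is the unique simplex in that star lying over $\tilde\sigma$.

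For a triple $[\pi]\subseteq[\rho]\subseteq[\sigma]$, restricting $\phi_{[\rho]\subseteq[\sigma]}$ to the face of $\tilde\rho$ corresponding to $[\pi]$ and composing with $\phi_{[\pi]\subseteq[\rho]}$ produces a second $M$-morphism from $\tilde\pi$ onto a face of $\tilde\sigma$. By the well-definedness of $Z$, this face agrees with the one identified by $\phi_{[\pi]\subseteq[\sigma]}$, so the discrepancy between the two morphisms is an element of $\Aut_M(\tilde\pi)=G_{[\pi]}$; this is the required conjugating element. The cocycle relation on quadruples then follows from a direct computation: the product of the four conjugating elements around a face-inclusion square equals the discrepancy between two $\phi$-chains around a closed loop of inclusions, which collapses to the identity.

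Finally, I would verify that the simplicial groupoid $b_\bullet(Z,G)$ is naturally isomorphic to $[b_\bullet M\righttwoarrows b_\bullet O]$. Using the chosen $\phi$'s, an $n$-simplex of $b_\bullet(Z,G)$---a chain $[\sigma_0]\subseteq\cdots\subseteq[\sigma_n]$ together with a compatible decoration by groupoid elements---corresponds bijectively to an $n$-simplex of $[b_\bullet M\righttwoarrows b_\bullet O]$---a chain $\tilde\sigma_0\subseteq\cdots\subseteq\tilde\sigma_n$ of lifts together with compatible morphisms of $M$. The main obstacle, and the reason the lemma is nontrivial, is the consistent bookkeeping of the choices $\phi_{[\rho]\subseteq[\sigma]}$: the cocycle condition and injectivity established above are precisely what ensures that this bijection respects all the simplicial structure maps and thus defines an equivalence of simplicial groupoids.
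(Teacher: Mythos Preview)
Your proposal is correct and follows essentially the same approach as the paper: define $Z$ by taking $M$-isomorphism classes of vertices and simplices, assign to each simplex its automorphism group, and verify that the resulting $b_\bullet(Z,G)$ agrees with $[b_\bullet M\righttwoarrows b_\bullet O]$.

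The one difference worth noting is that the paper sidesteps the explicit cocycle bookkeeping you flag as ``the main obstacle.'' Rather than choosing lifts $\tilde\sigma$ and morphisms $\phi_{[\rho]\subseteq[\sigma]}$ at the outset, the paper first assigns to each simplex $[\sigma]$ of $Z$ the entire \emph{groupoid} $G_{[\sigma]}$ of lifts of $[\sigma]$ to $O$ (with $M$-morphisms between them). This is a strict functor $S(Z)^\op\to\Grpd$, so there is no coherence to check, and the identification $b_\bullet(Z,G)\cong[b_\bullet M\righttwoarrows b_\bullet O]$ is then tautological. Only at the end does one choose a base object in each connected groupoid $G_{[\sigma]}$ to extract a simplicial complex of groups in the concrete sense; faithfulness of the restriction functors (from local injectivity) gives injectivity of the resulting group homomorphisms. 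Your direct approach is equivalent but front-loads the choices and hence the verification.
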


\begin{proof}
The hypotheses imply that there is a simplicial complex $Z$ whose vertices are the isomorphism classes in the vertex groupoid $V(M)\righttwoarrows O(M)$, and whose simplices are the $M$-isomorphism classes of simplices of $O$.
Now $M\righttwoarrows O$ defines a functor $G:S(Z)^\op\to\Grpd$, and there is a natural isomorphism between $b_\bullet(Z,G)$ and the simplicial groupoid $[b_\bullet M\righttwoarrows b_\bullet O]$.
By definition, all the groupoids $G_\sigma$ have a single isomorphism class, and all the functors $G_\tau\to G_\sigma$ are faithful (this follows from local injectivity of $M\righttwoarrows O$).
Choosing (independently) a base object of each $G_\sigma$ shows $(Z,G)$ comes from a simplicial complex of groups.
\end{proof}

The \emph{nerve} $N(X,\{U_i\}_i)$ of a collection of open sets $\{U_i\subseteq X\}_i$ is the simplicial complex whose vertices $V$ are the indices $i$ with $U_i\ne\varnothing$, and in which a collection $I$ of indices spans a simplex (i.e.\ $I\in S$) iff $\bigcap_{i\in I}U_i\ne\varnothing$.
A partition of unity $\{f_i:X\to\RR_{\geq 0}\}_i$ subordinate to a locally finite open cover $\{U_i\subseteq X\}_i$ defines a map from $X$ to the geometric realization $\left\|N(X,\{U_i\}_i)\right\|$ of the nerve of the open cover.

\begin{proposition}\label{maptosimplicial}
For any paracompact orbispace $X$, there exists a simplicial complex of groups $(Z,G)$ and a map $X\to\left\|(Z,G)\right\|$ which is injective on isotropy groups.
Moreover, we may take $Z$ to be locally finite, and we may take the groups $G_z$ for vertices $z\in Z$ to be isotropy groups of points of $X$.
If $X$ is coarsely finite-dimensional (resp.\ $d$-dimensional), then $Z$ may be taken to be finite-dimensional (resp.\ $d$-dimensional).
\end{proposition}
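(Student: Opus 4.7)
The plan is to construct, from a locally finite atlas of $X$ with strong \'etaleness of the fiber product, a locally injective simplicial complex groupoid $M\righttwoarrows O$ satisfying the hypotheses of Lemma~\ref{MOgroup}; this yields a simplicial complex of groups $(Z,G)$ whose geometric realization is $[\|M\|\righttwoarrows\|O\|]$, and the desired map $X\to\|(Z,G)\|$ is then produced by a partition of unity on $\left|X\right|$ together with stack-theoretic gluing data from the atlas.

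First I would apply Proposition~\ref{atlasconnection} to obtain an atlas $U=\bigsqcup_i V_i\to X$ with $\{V_i/G_i\hookrightarrow X\}_i$ a cover as in Proposition~\ref{orbispacelocal}, both projections $U\times_X U\righttwoarrows U$ strongly \'etale, and (by paracompactness of $\left|X\right|$) locally finite. In the coarsely finite-dimensional (resp.\ $d$-dimensional) case I would invoke the classical refinement theorem for paracompact spaces so that the nerve of $\{\left|V_i/G_i\right|\}_i$ has finite (resp.\ at most $d$) dimension. Shrinking each $V_i$ $G_i$-equivariantly, I would arrange that each $V_i$ contains a basepoint with full stabilizer $G_i$, so that every vertex group of the eventual $(Z,G)$ is a genuine isotropy group of $X$.

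Next I would let $O$ have vertex set the index set $\{i\}$ and simplex set the nerve $N(\left|X\right|,\{\left|V_i/G_i\right|\}_i)$. The morphisms of $M$ would come from the connection sieves on $U\times_X U\to U$ supplied by the strongly \'etale structure: each sieve element witnesses a germ of isomorphism between two small open subsets of $U$, which I would discretize into arrows of $M$ joining vertices of $O$; the higher simplices of $M$ come from analogous connection sieves on $U^{[n]}$, built inductively. The groupoid operations of $M$ come from composition in $U\times_X U$. I would verify Lemma~\ref{MOgroup}: distinct indices in a single simplex are never $M$-isomorphic (they label genuinely distinct components of $U$), and any $M$-isomorphism of vertex sets of simplices extends to an $M$-isomorphism of the simplices themselves, by coherence of the connection sieve structure. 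Lemma~\ref{MOgroup} then produces the required simplicial complex of groups $(Z,G)$. Finally, a partition of unity $\{f_i\}_i$ subordinate to $\{\left|V_i/G_i\right|\}_i$ gives a continuous map $\left|X\right|\to\|O\|$, and the atlas $U\to X$ together with the groupoid data $U\times_X U\to\|M\|$ supplies the lifting to a stack map $X\to\|(Z,G)\|$. Injectivity on isotropy at $x\in X$ follows because $G_x$ embeds into each $G_i$ with $x\in\left|V_i/G_i\right|$ and the isotropy of the image point is precisely this common subgroup.

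I expect the main obstacle to be the construction of $M$ and the verification of the conditions of Lemma~\ref{MOgroup}. The naive nerve of the open cover supplies only the data of $O$; producing a compatible simplicial groupoid $M$ requires simultaneously discretizing all the fiber products $U^{[n]}$ via connection sieves in a way consistent with all simplicial identities and with groupoid composition. Strong \'etaleness (Proposition~\ref{atlasconnection}) is precisely the tool that makes this possible, but organizing the bookkeeping appears to be what the authors have in mind when noting that the implementation is ``more delicate than one might initially expect.''
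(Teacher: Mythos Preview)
Your proposal has a genuine gap at the construction of the simplicial complex groupoid, specifically in your choice of vertex set for $O$.

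You take the vertices of $O$ to be the index set $\{i\}$ of the cover $\{V_i/G_i\}_i$. With this choice, the first hypothesis of Lemma~\ref{MOgroup} cannot be satisfied. If $\left|V_i/G_i\right|\cap\left|V_j/G_j\right|\ne\varnothing$ (so $\{i,j\}$ is a simplex of $O$), then there exist points $v\in V_i$ and $v'\in V_j$ lying over the same point of $X$, hence there are morphisms in $U\times_XU$ between them. Any reasonable discretization of $U\times_XU$ into a morphism complex $M$ will therefore contain an arrow $i\to j$, making the vertices $i$ and $j$ isomorphic in $M\righttwoarrows O$. Your justification that ``they label genuinely distinct components of $U$'' confuses the object space with the morphism space: distinct components of $U$ are routinely connected by $U\times_XU$. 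If instead you force $M$ to contain no morphisms between distinct indices, then the resulting $(Z,G)$ is just the nerve with vertex groups $G_i$, and there is no reason for the required injections $G_\tau\hookrightarrow G_\sigma$ to exist (consider two charts with groups $\ZZ/2$ and $\ZZ/3$ overlapping only where the isotropy is trivial).

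The paper's proof avoids this by taking a much finer vertex set for $O$: for each $x\in\left|X\right|$ one chooses a small neighborhood $Z_x$ and takes the vertices to be the finitely many lifts $o\in o(x)\subseteq U$ of $x$, with $U_o\subseteq U$ the corresponding small neighborhoods. Then $O=N(U,\{U_o\}_o)$ and $M=N(U\times_XU,\{U_m\}_m)$ for the lifts $m\in m(x)\subseteq U\times_XU$. The point is that vertices of a single simplex of $O$ now lie over \emph{distinct} points of $\left|X\right|$ (since the $U_o$'s over a fixed $x$ are disjoint), and hence cannot be related by any morphism in $U\times_XU$; this is exactly what makes the first hypothesis of Lemma~\ref{MOgroup} hold. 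The connection sieve is used not to ``discretize'' morphisms into arrows between indices, but to ensure that the projections $M\righttwoarrows O$ are locally injective and that the groupoid is sufficiently \'etale. The resulting $Z$ then has vertices indexed by points $x\in\left|X\right|$ (the $M$-isomorphism classes of vertices of $O$), not by the chart indices $i$.
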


\begin{proof}
We begin with an open cover $\{V_i/G_i\subseteq X\}_i$ with the properties guaranteed by Proposition \ref{atlasconnection}, and we set $U:=\bigsqcup_iV_i$.
Since $\left|X\right|$ is paracompact, by $G_i$-equivariantly shrinking the spaces $V_i$, we may assume that the associated open cover $\{\left|V_i/G_i\right|\subseteq\left|X\right|\}_i$ of coarse spaces is locally finite.
We fix a covering sieve $S$ on $U\times_XU$ which is invariant under the `exchange' (i.e.\ `inverse') involution of $U\times_XU$ and is a connection sieve for both projections $U\times_XU\righttwoarrows U$.
We also fix a partition of unity $\{f_i:\left|X\right|\to\RR_{\geq 0}\}_i$ subordinate to the open cover $\{\left|V_i/G_i\right|\subseteq\left|X\right|\}_i$

Denote by $o(x)\subseteq U$ and $o_i(x)\subseteq V_i$ the fibers over $x\in\left|X\right|$, so $o(x)=\bigsqcup_io_i(x)$; similarly define $m(x)\subseteq U\times_XU$ and $m_{ij}(x)\subseteq V_i\times_XV_j$ with $m(x)=\bigsqcup_{i,j}m_{ij}(x)$.
These sets are finite since the $G_i$ are finite and $\{\left|V_i/G_i\right|\subseteq\left|X\right|\}_i$ is locally finite.
Furthermore, they have the discrete topology, since $U$ is Hausdorff and $U\times_XU$ is Hausdorff (since $U$ is Hausdorff and $X\to X\times X$ is separated).

The Hausdorff property implies that the inclusions $o_i(x)\subseteq V_i$ and $m_{ij}(x)\subseteq V_i\times_XV_j$ admit retractions defined in some open neighborhood.
Now the inverse images of small open neighborhoods $x\in\left|Z_x\right|\subseteq\left|X\right|$ (i.e.\ open substacks $Z_x\subseteq X$) form a basis of neighborhoods of $o_i(x)$ and $m_{ij}(x)$, so for sufficiently small $Z_x$, these inverse images are naturally disjoint unions $U_x=\bigsqcup_{o\in o(x)}U_o$ and $U_x\times_XU_x=\bigsqcup_{m\in m(x)}U_m$.
Note that this applies only inside $V_i$ and $V_i\times_XV_j$ for which $o_i(x)$ and $m_{ij}(x)$ are non-empty: the full inverse image of $Z_x$ inside $U$ may intersect other $V_i$ nontrivially.
By shrinking $Z_x$ further, we may ensure that the retraction $(U_x\times_XU_x\righttwoarrows U_x)\to(m(x)\righttwoarrows o(x))$ is a map of groupoids.
For later purposes, let us also take $Z_x$ small enough so that:
\begin{itemize}
\item If $x\in\left|V_i/G_i\right|$ then $\left|Z_x\right|\subseteq\left|V_i/G_i\right|$.
\item If $x\notin\supp f_i$ then $\left|Z_x\right|\cap\supp f_i=\varnothing$.
\item If $f_i(x)>0$ then $f_i>0$ over all of $\left|Z_x\right|$.
\end{itemize}
Each of these conditions can be ensured on its own, and since the open cover $\{\left|V_i/G_i\right|\subseteq\left|X\right|\}_i$ is locally finite, we can ensure all at once.

We also shrink $Z_x$ so as to ensure that each $U_m\in S$ (our chosen connection sieve), which has the following implication: given $o,o'\in U$ with $U_o\cap U_{o'}\ne\varnothing$, the relation $U_m\cap U_{m'}\ne\varnothing$ is a partial bijection between lifts $m,m'\in U\times_XU$ of $o$ and $o'$.
Furthermore, the domain of this bijection is as large as possible: for $U_o\cap U_{o'}\ne\varnothing$ (so $o\in o_i(x)$ and $o'\in o_i(x')$ for some $i$) with $x,x'\in\left|V_j/G_j\right|$, we get a full bijection between the inverse images of $o$ and $o'$ inside $m_{ij}(x)$ and $m_{ij}(x')$ (this follows since the projection $V_i\times_XV_j\to V_i$ is a finite covering space of degree $\left|G_j\right|$ over $U_o\cup U_{o'}$).

We now consider the nerves $N(U,\{U_o\}_{o\in U})$ and $N(U\times_XU,\{U_m\}_{m\in U\times_XU})$.
Note that for simplices in these nerves, namely subsets $O\subseteq U$ or $M\subseteq U\times_XU$ with $\bigcap_{o\in O}U_o\ne\varnothing$ or $\bigcap_{m\in M}U_m\ne\varnothing$, the maps $O\to\left|X\right|$ or $M\to\left|X\right|$ are injective.
The natural maps on index sets $U\times_XU\righttwoarrows U$ determine maps of nerves
\begin{equation*}
N(U\times_XU,\{U_m\}_{m\in U\times_XU})\righttwoarrows N(U,\{U_o\}_{o\in U}).
\end{equation*}
These maps are locally injective; indeed, local injectivity means that for every $o,o'\in U$ with $U_o\cap U_{o'}\ne\varnothing$ and every $m\in U\times_XU$ projecting to $o$, there is at most one lift $m'\in U\times_XU$ of $o'$ with $U_m\cap U_{m'}\ne\varnothing$, and this is a direct consequence of our assumption that every $U_m\in S$.
Let us now argue that there is a natural composition map
\begin{multline*}
N(U\times_XU,\{U_m\}_{m\in U\times_XU})\times_{N(U,\{U_o\}_{o\in U})}N(U\times_XU,\{U_m\}_{m\in U\times_XU})\\\longrightarrow N(U\times_XU,\{U_m\}_{m\in U\times_XU}).
\end{multline*}
More precisely, we claim that for non-empty finite subsets $M\subseteq U\times_XU$ with $\bigcap_{m\in M}U_m\ne\varnothing$ and $M'\subseteq U\times_XU$ with $\bigcap_{m\in M'}U_m\ne\varnothing$ projecting to $O\subseteq U$ (under the first and second projections, respectively), the subset $M''\subseteq U\times_XU$ defined by applying the composition map $(U\times_XU)\times_U(U\times_XU)\to U\times_XU$ to $M$ and $M'$ also satisfies $\bigcap_{m\in M''}U_m\ne\varnothing$.
This claim follows from the property that every $U_m\in S$ (indeed, this property implies that $\bigcap_{m\in M}U_m\xrightarrow\sim\bigcap_{o\in O}U_o\xleftarrow\sim\bigcap_{m\in M'}U_m$).
We have thus defined a locally injective simplicial complex groupoid
\begin{equation*}
M:=N(U\times_XU,\{U_m\}_{m\in U\times_XU})\righttwoarrows N(U,\{U_o\}_{o\in U})=:O.
\end{equation*}
We now show that this locally injective simplicial complex groupoid is sufficiently \'etale.
Every isomorphism class of vertex (equivalently, every $x\in\left|X\right|$ with $\left|Z_x\right|\ne\varnothing$) has a representative $o\in V_i\subseteq U$ with $f_i(o)=f_i(x)>0$.
To show that these representatives are \'etale, let $m\in U\times_XU$ be a morphism with source $o$ and target $o'$, and let $U_{o'}\cap U_{o''}\ne\varnothing$.
We must show that there is a bijection between morphisms $o\to o'$ and $o\to o''$.
The morphisms in question all have source inside $V_i$, so we really can consider just $V_i\times U$ for the present purpose.
Now the bulleted conditions on $\left|Z_x\right|$ from above imply that since $U_{o'}\cap U_{o''}\ne\varnothing$ and $f_i(o')>0$, we have $U_{o'}\cup U_{o''}\subseteq\left|V_i/G_i\right|\subseteq\left|X\right|$.
Thus over $U_{o'}\cap U_{o''}$ the map $V_i\times_XU\to U$ is a finite covering space of degree $\left|G_i\right|$.
Thus all points have the same number of lifts, so it follows that the connection sieve property gives us a bijection between lifts.

We have already seen above that our sufficiently \'etale locally injective simplicial complex groupoid satisfies the first hypothesis of Lemma \ref{MOgroup}, and the second hypothesis follows from the partial bijection property derived above from the connection sieve.
Thus by Lemma \ref{MOgroup}, there is a simplicial complex of groups $(Z,G)$ giving rise to the same simplicial groupoid $b_\bullet(Z,G)=[b_\bullet M\righttwoarrows b_\bullet O]$ and thus (since $M\righttwoarrows O$ is sufficiently \'etale) to the same geometric realization $\left\|(Z,G)\right\|=[\left\|M\right\|\righttwoarrows\left\|O\right\|]$.
The groups $G_z$ associated to vertices $z\in Z$ are by definition isotropy groups of the vertex groupoid $V(M)\righttwoarrows V(O)$, which are by definition isotropy groups of points of $X$.

To conclude, it remains to define a map $X\to\left\|(Z,G)\right\|$ which is injective on isotropy groups.
To define this map, we shrink the $Z_x$ so that the open cover $\{\left|Z_x\right|\subseteq\left|X\right|\}_{x\in\left|X\right|}$ is locally finite, and choose a partition of unity $\{g_x:\left|X\right|\to\RR_{\geq 0}\}_{x\in\left|X\right|}$ subordinate to the open cover $\{\left|Z_x\right|\subseteq\left|X\right|\}_{x\in\left|X\right|}$.
These maps $g_x$ lift to maps $g_o:U\to\RR_{\geq 0}$ supported inside $U_o$ and $g_m:U\times_XU\to\RR_{\geq 0}$ supported inside $U_m$.
The collection of these lifts defines a map of topological groupoids
\begin{equation*}
(U^0\times_XU^0\righttwoarrows U^0)\to\bigl(\left\|N(U\times_XU,\{U_m\}_{m\in U\times_XU})\right\|\righttwoarrows\left\|N(U,\{U_o\}_{o\in U})\right\|\bigr)
\end{equation*}
where $U^0$ is the disjoint union of the open loci $V_i^0\subseteq V_i$ where $f_i>0$ (the maps $g_o$ and $g_m$ do not define a map over all of $U\times_XU\righttwoarrows U$ due to the fact that $U_x=\bigsqcup_{o\in o(x)}U_o$ and $U_x\times_XU_x=\bigsqcup_{m\in m(x)}U_m$ may not be the full inverse images of $Z_x$ inside $U$ and $U\times_XU$).
It remains to check that this map is injective on isotropy groups; in other words, for $o,o'\in U^0$ we must show that the map $\{o\}\times_X\{o'\}\to\left\|N(U\times_XU,\{U_m\}_{m\in U\times_XU})\right\|$ is injective.
Distinct elements of $\{o\}\times_X\{o'\}$ are, in particular, distinct lifts of $o$, which therefore cannot lie in any common $U_m$ since $U_m\to U$ is injective, so we see that the map is indeed injective on isotropy groups.
\end{proof}

\begin{proof}[Proof of Theorem \ref{maingeneral}]
Apply Proposition \ref{maptosimplicial} to find a simplicial complex of finite groups $(Z,G)$ and a map $X\to\left\|(Z,G)\right\|$ which is injective on isotropy groups.
Now Theorem \ref{mainsimplicial} applies to $\left\|(Z,G)\right\|$ to give a vector bundle $V\to\left\|(Z,G)\right\|$, and the pullback of this bundle to $X$ satisfies the desired property since $X\to\left\|(Z,G)\right\|$ is injective on isotropy groups.
\end{proof}

\section{From vector bundles to principal bundles}\label{vectorprincipal}

We derive Corollaries \ref{maingeneralprincipal} and \ref{orbifoldpresentation} from Theorem \ref{maingeneral}.

A hermitian inner product on a complex vector bundle $V\to X$ is a map $h:V\times_XV\to\CC$ satisfying $h(v,w)=\overline{h(w,v)}$, $h(v,\alpha w)=\alpha h(v,w)$ for $\alpha\in\CC$, and $h(v,v)>0$ for $v\ne 0$ (meaning, these conditions are imposed on the fiber $h_x:V_x\times V_x\to\CC$ over each map $*\xrightarrow xX$).

\begin{lemma}\label{paracompactmetric}
A complex vector bundle over a paracompact orbispace $X$ admits a hermitian inner product.
\end{lemma}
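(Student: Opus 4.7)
The plan is to build the hermitian inner product locally in $G_i$-equivariant charts and then glue using a partition of unity on the coarse space.

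First, by Proposition \ref{orbispacelocal}, I would fix an open cover $\{V_i/G_i \subseteq X\}_i$ with each $V_i$ Hausdorff and each $G_i$ finite discrete. Since $\left|X\right|$ is paracompact (as $X$ is a paracompact orbispace), I may refine the $V_i$ (equivariantly) so that the induced cover $\{\left|V_i/G_i\right|\subseteq\left|X\right|\}_i$ is locally finite, and choose a partition of unity $\{f_i:\left|X\right|\to\RR_{\geq 0}\}_i$ subordinate to it. Each $f_i$ pulls back under $X\to\left|X\right|$ to a function on $X$, supported in $V_i/G_i$.

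Next, on each chart I produce a $G_i$-invariant hermitian inner product on the pullback of $V$ to $V_i$. The pullback $V|_{V_i}$ is an ordinary complex vector bundle on the Hausdorff paracompact space $V_i$ (paracompactness of $V_i$ follows, after further shrinking if necessary, from paracompactness of $\left|V_i/G_i\right|$), so it admits some hermitian inner product $h_i^0$ by the standard partition-of-unity argument on topological spaces. Averaging over the finite group $G_i$ gives a $G_i$-invariant hermitian inner product $h_i$ on $V|_{V_i}$, which by descent is the same data as a hermitian inner product on $V|_{V_i/G_i}$.

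I then form the global sum $h := \sum_i f_i \cdot h_i$, where each summand $f_i\cdot h_i$ is extended by zero outside $V_i/G_i$ (this extension is continuous because $f_i$ vanishes on a neighborhood of the complement of $\supp f_i \subseteq \left|V_i/G_i\right|$). Local finiteness of the cover ensures that near every point only finitely many terms are nonzero, so the sum defines a continuous section $h:V\times_X V\to\CC$. Hermitian symmetry and conjugate linearity pass through the sum automatically; positive definiteness at a point $x$ follows because at least one $f_i(x)>0$, and for that index $h_i$ is already positive definite on the fiber $V_x$ (all other terms contribute non-negative hermitian forms).

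The only mildly nontrivial step is ensuring that the $G_i$-invariant inner product $h_i$ on $V|_{V_i}$ really does descend to $V|_{V_i/G_i}$ and that its further extension by zero to $X$ is continuous; both are straightforward once one uses that $V\to X$ is representable (so $V|_{V_i/G_i}$ is literally the stack quotient of $V|_{V_i}$ by $G_i$) and that $\supp f_i$ is closed in $\left|X\right|$ inside $\left|V_i/G_i\right|$. No step here presents a genuine obstacle — the proof is the standard partition-of-unity argument, lifted one level to the orbispace setting by passing to $G_i$-equivariant charts.
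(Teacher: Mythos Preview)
Your approach is essentially the same as the paper's: build $G_i$-invariant metrics on local quotient charts by averaging, then glue via a partition of unity on the coarse space. The one substantive difference is how you obtain the initial metric on $V|_{V_i}$. The paper first refines the cover (via Corollary~\ref{atlasrefine}) so that the pullback of $V$ to each $V_i$ is \emph{trivial}, and then simply takes the standard hermitian form on $\CC^n$; you instead appeal to paracompactness of $V_i$ and the classical existence argument.

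That step carries a small wrinkle: $\left|V_i/G_i\right|$ is only an \emph{open} subset of the paracompact Hausdorff space $\left|X\right|$, and open subsets of paracompact spaces need not be paracompact, so your parenthetical ``paracompactness of $V_i$ follows, after further shrinking if necessary, from paracompactness of $\left|V_i/G_i\right|$'' is not quite justified as stated. The cleanest fix is exactly what the paper does: refine the cover so that $V|_{V_i}$ is trivial, making the local metric immediate and sidestepping any paracompactness of $V_i$. With that adjustment your argument matches the paper's.
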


\begin{proof}
Begin with an \'etale atlas $\bigsqcup_iU_i\to X$ such that the pullback of $V$ to every $U_i$ is trivial.
By Corollary \ref{atlasrefine}, we may refine this cover further so that each map $U_i\to X$ is the composition of $U_i\to U_i/G_i$ with an open embedding $U_i/G_i\hookrightarrow X$.
The pullback of $V$ to each $U_i$ is trivial, hence admits a hermitian inner product; by averaging, we may make it $G_i$-invariant thus giving a hermitian inner product $h_i$ on the restriction of $V$ to each $U_i/G_i\subseteq X$.
Since $\left|X\right|$ is paracompact, there is a partition of unity $\varphi_i$ subordinate to this open cover of $\left|X\right|$, and hence $\sum_i\varphi_ih_i$ is the desired hermitian inner product on $V$.
\end{proof}

\begin{corollary}\label{paracompactsplit}
Every short exact sequence of vector bundles over a paracompact orbispace splits.
\qed
\end{corollary}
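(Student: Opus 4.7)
The plan is to deduce splitting from the existence of hermitian inner products established in Lemma \ref{paracompactmetric}, using the standard orthogonal-complement construction, but done in the orbispace setting.

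Concretely, suppose we are given a short exact sequence $0\to V'\to V\to V''\to 0$ of complex vector bundles over a paracompact orbispace $X$. By Lemma \ref{paracompactmetric}, choose a hermitian inner product $h$ on $V$. I would then define the candidate splitting $V''\to V$ as follows: for each map $U\to X$ from a topological space $U$, the pullback sequence $0\to V'|_U\to V|_U\to V''|_U\to 0$ is a short exact sequence of ordinary vector bundles on $U$, and the fiberwise orthogonal complement of $V'|_U$ inside $V|_U$ (with respect to the pullback of $h$) defines a sub-bundle $W_U\subseteq V|_U$ mapping isomorphically to $V''|_U$. The inverse of this isomorphism gives a splitting $V''|_U\to V|_U$. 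Because $h$ is globally defined on $X$, these local splittings are compatible under pullback along any $U'\to U\to X$, and hence descend (by the stack property) to a global splitting $V''\to V$ of the original sequence.

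The only thing to verify is that the fiberwise orthogonal complement really does define a sub-bundle compatibly across charts. In an \'etale local chart of the form $U_i/G_i\hookrightarrow X$, the pullback sequence is a $G_i$-equivariant short exact sequence of vector bundles on $U_i$ and $h$ restricts to a $G_i$-invariant hermitian metric, so the orthogonal complement of $V'$ is automatically a $G_i$-invariant sub-bundle, hence descends to $U_i/G_i$. The descent of these local splittings to a global map $V''\to V$ is then immediate from representability of the vector bundle maps and the fact that equality of two such maps is a closed (hence stack-local) condition.

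I do not expect any real obstacle here; the content is entirely concentrated in Lemma \ref{paracompactmetric}, and once a global hermitian inner product is available the argument is identical to the classical one for paracompact spaces. No new appeal to paracompactness of $X$ is needed beyond what was already used to produce $h$.
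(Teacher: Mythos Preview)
Your proposal is correct and matches the paper's intent: the corollary is stated with a \qed\ and no proof, indicating it is meant to follow immediately from Lemma~\ref{paracompactmetric} via exactly the orthogonal-complement argument you give.
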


Given a rank $n$ complex vector bundle $V\to X$ with hermitian inner product, the associated frame bundle $P\to X$ is defined by declaring that a map $U\to P$ ($U$ a topological space) is a map $U\to X$ together with an isomorphism $\CC^n\times U\to V\times_XU$ under which the pullback of $h$ is the standard hermitian inner product on $\CC^n$.
There is an action of $U(n)$ on $P$ (by precomposition with automorphisms of $\CC^n$ respecting its hermitian inner product), giving $P$ the structure of a principal bundle over $X$, meaning that for every map $U\to X$ from a topological space, there is an open cover $\{U_i\subseteq U\}_i$ such that $P\times_XU_i\to U_i$ is isomorphic to $U(n)\times U_i\to U_i$ with $U(n)$ acting by left multiplication on the first factor.

\begin{proof}[Proof of Corollary \ref{maingeneralprincipal}]
Let $V\to X$ be the rank $n$ complex vector bundle produced by Theorem \ref{maingeneral}.
By Lemma \ref{paracompactmetric}, there exists a hermitian inner product on $V$.
The total space $P$ of the associated principal bundle $P\to X$ has trivial isotropy since the isotropy groups of $X$ act faithfully on the fibers of $V$.

We claim that $P$ is a Hausdorff topological space.
By Corollary \ref{criterionrepresentability}, it is enough to show that $P$ is an orbispace.
Since $P\to X$ is representable, the pullback of an \'etale atlas for $X$ is an \'etale atlas for $P$.
The diagonal of $P$ may be expressed as the composition $P\to P\times_XP\to P\times P$.
The second map $P\times_XP\to P\times P$ is proper, being a pullback of the diagonal of $X$.
To check that the first map $P\to P\times_XP$ is proper, it suffices by descent to show that its pullback under an \'etale atlas $U\to X$ is proper.
This pullback is proper since $P\times_XU\to U$ is a principal $U(n)$ bundle (of topological spaces).
We have thus shown that $P$ is a topological space.

The map $P\to X$ is representable and admits local sections (by definition), so the topological groupoid $P\times_XP\righttwoarrows P$ presents the stack $X$.
It can be seen by inspection that $P\times_XP\righttwoarrows P$ is the action groupoid of the $U(n)$ action on $P$.
\end{proof}

Let $\Sm$ denote the category of topological spaces equipped with a maximal atlas of charts from open sets of $\bigsqcup_{n\geq 0}\RR^n$ with smooth transition maps (a smooth manifold an object of $\Sm$ whose underlying topological space is Hausdorff).
A smooth structure on a stack $F:\Top^\op\to\Grpd$ is a substack $F^\sm$ of the pullback of $F$ under the forgetful functor $\Sm\to\Top$ (meaning that $F^\sm(U)$ is a full subcategory of $F(U)$ for $U\in\Sm$); maps $U\to F$ lying in (the essential image of) $F^\sm$ are then called smooth.
Given a map of stacks $F\to G$ and a smooth structure on $G$, we may consider the pullback smooth structure on $F$, defined as those maps $U\to F$ whose composition with $F\to G$ is smooth.
A map of stacks equipped with smooth structures $(F,F^\sm)\to(G,G^\sm)$ is called smooth iff the composition of any smooth map $U\to F$ with $F\to G$ is a smooth map $U\to G$ (equivalently, $F^\sm$ is contained inside the pullback of $G^\sm$).
Stacks with smooth structures form a 2-category just like stacks.
This category is complete, and limits $\lim_\alpha(F_\alpha,F_\alpha^\sm)$ are calculated by taking the limit of the underlying stacks $\lim_\alpha F_\alpha$ and declaring a map $U\to\lim_\alpha F_\alpha$ to be smooth iff every induced map $U\to F_\alpha$ is smooth.
The category $\Sm$ embeds fully faithfully into the category of stacks with smooth structures, by sending $X\in\Sm$ to the Yoneda functor of its underlying topological space, equipped with the smooth structure consisting of those maps $U\to X$ which are morphisms in $\Sm$.

\begin{definition}
A \emph{smooth orbifold} $X$ is an orbispace equipped with a smooth structure such that for every (equivalently, some) \'etale atlas $U\to X$, we have $U\in\Sm$ when $U$ is equipped with the pullback of the smooth structure on $X$.
\end{definition}

By Proposition \ref{orbispacelocal}, a stack $X$ with a smooth structure is a smooth orbifold iff $\left|X\right|$ is Hausdorff and there is an open cover of $X$ by $V_i/G_i$ for smooth manifolds $V_i$ equipped with smooth actions of finite groups $G_i$.

\begin{proof}[Proof of Corollary \ref{orbifoldpresentation}]
Let $V\to X$ be the complex vector bundle produced by Theorem \ref{maingeneral}.
It suffices to define a smooth structure on $V$, because then we may follow the arguments of Lemma \ref{paracompactmetric} and Corollary \ref{maingeneralprincipal} in the smooth category.

We begin by arguing that a complex vector bundle $V$ over a smooth orbifold $X$ has a smooth structure in a neighborhood of any given point of $\left|X\right|$.
We may thus assume that $X=U/G$ for a smooth manifold $U$ acted on smoothly by a finite group $G$, and that the pullback of $V$ to $U$ is trivial.
By shrinking this chart further (and adjusting $G$), we may assume that our given point of $\left|X\right|$ corresponds to a point $u\in U$ fixed by $G$.
Choose any trivialization $V_u\times U\to V\times_XU$ where $V_u$ denotes the fiber over $u$.
By averaging, we may make this map $G$ equivariant, and it remains an isomorphism in a neighborhood of $u$.
We thus obtain a trivialization of $V\times_XU$ near $u$ in which the action of $G$ is constant (independent of the $U$ coordinate).
The standard smooth structure in this trivialization is thus, in particular, invariant under the action of $G$, giving the desired smooth structure on $V$ near our given point of $\left|X\right|$.

Having shown the existence of smooth structures locally, we now show how to patch them together.
To show that there exists a smooth structure on $V$ over all of $X$, it suffices to show that for open subsets $A,B\subseteq\left|X\right|$ and smooth structures on $V|_A$ and $V|_B$, there exists a smooth structure on $V|_{A\cup B}$ restricting to the given smooth structure on $V|_A$ (indeed, this allows us to patch together smooth structures over arbitrary unions of open sets, by choosing a well ordering and adding one open set at a time).
To prove this pairwise patching statement, it suffices to show that an isomorphism of smooth vector bundles may be approximated by a smooth isomorphism (then apply this to the identity map on $V|_{A\cap B}$ equipped with the restrictions of the two given smooth structures).
Since $\left|X\right|$ is locally the quotient of Euclidean space by a finite group action, it is locally metrizable; since it is paracompact and Hausdorff, it is thus metrizable.
Thus every open subset of $\left|X\right|$ is metrizable, hence paracompact.
We may now conclude with a smooth partition of unity argument.
\end{proof}

\section{\texorpdfstring{$K$-theory}{K-theory} of orbispaces}\label{ktheory}

We derive Corollary \ref{enoughvectorbundles} from Theorem \ref{maingeneral}.
We then derive from Corollary \ref{enoughvectorbundles} some basic properties of the $K$-theory of finite rank vector bundles on orbispaces (some of this derivation is borrowed, with various technical differences, from \cite[\S 3]{luckoliver} and \cite{segalequivariant}).
In what follows, `vector bundle' can be taken to mean either real vector bundle or complex vector bundle (always of finite rank).

\begin{proof}[Proof of Corollary \ref{enoughvectorbundles}]
Denote our given map by $f:X\to Y$, and let $E$ be the given vector bundle on $X$.
Let $F$ be any vector bundle on $Y$ satisfying the conclusion of Theorem \ref{maingeneral}.

We first consider the local situation on $X$.
Fix $x\in X$, and let $U/G\hookrightarrow X$ be an open embedding sending $u\in U$ fixed by $G$ to $x\in X$ and inducing an isomorphism $G\xrightarrow\sim G_x$ (such an open embedding exists by the proof of Proposition \ref{orbispacelocal}).
By shrinking $U$, we may ensure that the pullbacks of $E$ and $F$ to $U$ are trivial.
Choose any map on $U$ from the pullback of $E$ to the pullback of $F^{\oplus N}$ (some integer $N<\infty$) which is injective and $G$-equivariant at $u$, and average it to make it equivariant everywhere.
This produces over $U/G$ a map from $E$ to $f^*F^{\oplus N}$ which is injective at $x$.
Now since $\left|X\right|$ is paracompact Hausdorff, hence normal, there exists a continuous function $\varphi:\left|X\right|\to[0,1]$ supported inside $\left|U/G\right|$ with $\varphi(x)=1$.
Multiplying by this function yields a map $E\to f^*F^{\oplus N}$ defined on all of $X$ which is injective at $x$.
Note that the integer $N$ may be taken independent of $x$ since the rank of $E$ is bounded.

We now combine the above maps to give an everywhere injective map $E\to f^*F^{\oplus M}$ as follows.
Fix an open cover $\left|X\right|=\bigcup_iU_i$ and maps $f_i:E\to f^*F^{\oplus N}$ defined over all of $X$ which are injective over $U_i$.
Since $\left|X\right|$ is coarsely finite-dimensional, we may refine this open cover so that it is locally finite and has nerve of dimension $\leq d$ for some integer $d<\infty$.
As in the proof of Lemma \ref{connectionparacompact}, there is yet another open cover of $\left|X\right|$ by open sets $V_I$ indexed by the non-empty subsets $I$ of the index set of the $U_i$, such that $V_I\subseteq\bigcap_{i\in I}U_i$ and $V_I\cap V_J=\varnothing$ unless $I\subseteq J$ or $J\subseteq I$.
Note that these conditions imply that $V_I=\varnothing$ unless $\left|I\right|\leq d+1$ and that $V_I\cap V_J=\varnothing$ if $\left|I\right|=\left|J\right|$ and $I\ne J$.
Now let $\sum_I\varphi_I\equiv 1$ be a partition of unity subordinate to the open cover $\left|X\right|=\bigcup_IV_I$.
We may now define our desired everywhere injective map $E\to f^*F^{\oplus(d+1)N}$ (so $M=(d+1)N$) by the formula $\sum_I\alpha_{\left|I\right|}\varphi_If_{i(I)}$, where $i(I)$ is any choice of index in the set $I$, and $\alpha_r:F^{\oplus N}\hookrightarrow(F^{\oplus N})^{\oplus(d+1)}=F^{\oplus(d+1)N}$ is the inclusion of the $r$th direct summand.

We have thus constructed an injection $E\to f^*F^{\oplus M}$ over $X$.
\end{proof}

\begin{definition}
For any stack $X$, denote by $\Vect(X)$ the set of isomorphism classes of vector bundles on $X$ \emph{of bounded rank}.
\end{definition}

Direct sum of vector bundles equips $\Vect(X)$ with the structure of an abelian monoid.
A map of stacks $X\to Y$ induces a pullback map $\Vect(Y)\to\Vect(X)$.

The reason we restrict attention to vector bundles of bounded rank is so that we may appeal to Corollary \ref{enoughvectorbundles}.
A counterexample of L\"uck--Oliver \cite[Example 3.11]{luckoliver} shows that the $K$-theory of vector bundles of unbounded rank fails to be a cohomology theory on (possibly infinite) simplicial complexes of groups, whereas we shall see below that the $K$-theory of vector bundles of bounded rank is a cohomology theory for such.

For an abelian monoid $M$, its group completion $M[-M]$ is the quotient of the free abelian group on the underlying set of $M$ by the subgroup generated by $[a]+[b]-[a+b]$ for $a,b\in M$.
The map $M\to M[-M]$ is universal (initial) among maps from $M$ to an abelian group.

\begin{definition}
$K^0(X)$ is the group completion of $\Vect(X)$ for any stack $X$.
\end{definition}

A map of stacks $X\to Y$ induces a pullback map $K^0(Y)\to K^0(X)$.
The functor $K^0$ is finitely additive, in the sense that $K^0(\varnothing)=0$ and the natural map $K^0(X\sqcup Y)\to K^0(X)\oplus K^0(Y)$ is an isomorphism.
In the present context of vector bundles of bounded rank, additivity does not hold for general infinite disjoint unions.

Eventually, we will restrict attention to the $K$-theory of orbispaces satisfying the hypothesis of Theorem \ref{maingeneral}.
However, we will impose such hypotheses gradually, as they become relevant.

To show that $K$-theory is homotopy invariant, the following is the key assertion.

\begin{lemma}\label{intervalpullback}
For any paracompact orbispace $X$, every vector bundle on $X\times[0,1]$ is pulled back from $X$.
\end{lemma}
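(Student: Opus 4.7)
The plan is to adapt the classical Milnor-style proof of homotopy invariance for vector bundles over paracompact spaces to the orbispace setting. Write $p : X \times [0,1] \to X$ for the projection and $V_0 := V|_{X \times \{0\}}$; the goal is an isomorphism $V \cong p^* V_0$. The first step is to establish local triviality in the $[0,1]$-direction: for each $x \in X$, I produce an open chart $W/\Gamma \hookrightarrow X$ as in Proposition \ref{orbispacelocal} containing $x$ together with a bundle isomorphism
\[
\phi_W : V|_{(W/\Gamma) \times [0,1]} \xrightarrow{\sim} p^* V_0|_{(W/\Gamma) \times [0,1]}
\]
which is the identity at $t = 0$. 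Pulling back to the $\Gamma$-equivariant bundle on $W \times [0,1]$, local triviality together with compactness of $[0,1]$ yield, after $\Gamma$-equivariantly shrinking $W$, a partition $0 = t_0 < \cdots < t_n = 1$ such that the bundle is trivial on each slab $W \times [t_{j-1}, t_j]$. Piecing together the resulting ``constant-in-$t$'' trivializations gives a (not yet equivariant) iso with $p^* V_0$ equal to the identity at $t = 0$; averaging over $\Gamma$ produces $\phi_W$, the $t = 0$ condition surviving averaging because the identity is $\Gamma$-equivariant.

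The second step is the global assembly. Cover $\left|X\right|$ by charts $\{W_i/\Gamma_i\}$ of the above form with associated local isomorphisms $\phi_i$, and refine (by paracompactness, together with Hausdorffness from Proposition \ref{orbispacelocal}) to a locally finite cover with subordinate partition of unity $\{\rho_i\}$. Apply Milnor's regrouping trick: for each nonempty finite subset $S$ of the index set, set
\[
V_S := \bigl\{x \in \left|X\right| : \min_{i \in S} \rho_i(x) > \max_{i \notin S} \rho_i(x)\bigr\}.
\]
These $V_S$ are open, disjoint from $V_{S'}$ for any distinct $S'$ of equal cardinality, and contained in $\bigcap_{i \in S} \left|W_i/\Gamma_i\right|$; choosing any $i(S) \in S$, each $V_S$ inherits a local iso from $\phi_{i(S)}$. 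Set $U_n := \bigsqcup_{|S| = n} V_S$, repackaging everything as a countable locally finite cover $\{U_n\}_{n \geq 1}$ with local isos $\phi_n$. Choose a subordinate partition of unity $\{\tau_n\}$, set $T_n := \tau_1 + \cdots + \tau_n$, and let $\beta_n : X \times [0,1] \to X \times [0,1]$ be the self-map over $X$ sending $(x, t)$ to $(x, \max(0, t - T_n(x)))$. Inductively construct bundle isomorphisms $\Psi_n : V \xrightarrow{\sim} \beta_n^* V$, starting from $\Psi_0 = \id$ and producing $\Psi_n$ from $\Psi_{n-1}$ by composing with the ``partial slide by $\tau_n$'' on $U_n \times [0,1]$ and the identity elsewhere; here a ``partial slide by $s$'' at $x$ is the fiberwise iso $V_{(x,t)} \to V_{(x, \max(0,t-s))}$ defined by $\phi_n|_{(x, \max(0,t-s))}^{-1} \circ \phi_n|_{(x, t)}$. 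The identity extension is continuous across $\partial U_n$ since the partial slide by $0$ is the identity.

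The main obstacle is the convergence and continuity of the iterated construction. Local finiteness of $\{U_n\}$ handles it: near each $(x, t) \in X \times [0,1]$ only finitely many $\tau_n$ are nonzero, so $T_n(x) \geq 1$ eventually, $\beta_n$ coincides on a neighborhood with $\beta_\infty : (x, t) \mapsto (x, 0)$ for all large $n$, and the $\Psi_n$ stabilize. The stable limit $\Psi_\infty : V \xrightarrow{\sim} p^* V_0$ is the desired global isomorphism. Equivariance survives because every slide is weighted by scalar partition functions and uses only the already-equivariant $\phi_n$, and the regrouping ensures each inductive step lives on a disjoint union of open sets individually sitting inside a single chart, keeping the bookkeeping entirely local.
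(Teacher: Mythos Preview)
Your global patching step is essentially the paper's own argument: both build the isomorphism by composing ``slides'' $\phi_i^{-1}\circ\phi_i$ along a partition of unity, and local finiteness makes the composition stabilize. The paper uses a total ordering on the full index set and the multiplicative reparametrization $t\mapsto(1-\sum\varphi_i)t$, while you first regroup to a countable cover and use the subtractive $t\mapsto\max(0,t-T_n)$; these differences are cosmetic.

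There is, however, a genuine gap in your local step. Averaging a bundle isomorphism over $\Gamma$ yields a $\Gamma$-equivariant bundle \emph{map}, but not in general an isomorphism. Your averaged map is the identity along $W\times\{0\}$, so it is an isomorphism on a neighborhood of that slice, but nothing forces it to remain invertible on all of $W\times[0,1]$. For a concrete failure, take $W$ a point, $\Gamma=\ZZ/2$ acting on $V_0=\CC^2$ by swapping coordinates, and the non-equivariant isomorphism $A(t)=\left(\begin{smallmatrix}1&0\\0&e^{i\pi t}\end{smallmatrix}\right)$: then $A(0)=\id$ but the $\Gamma$-average of $A(1)$ is zero.

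The paper avoids this by organizing the averaging differently. It works at a $\Gamma$-fixed lift $y\in W$ and, on each small slab $W\times T$, chooses a trivialization that is the identity along $\{y\}\times T$ (not along $W\times\{0\}$). Averaging such a map keeps it equal to the identity along $\{y\}\times T$, hence an isomorphism on a neighborhood of $\{y\}\times T$; one then shrinks $W$ and uses compactness of $[0,1]$ to cover the whole interval. The point is that the averaged map must be anchored to the identity along a set that meets every time slice, and your choice $W\times\{0\}$ does not. Reworking your local argument with this correction (choose a fixed point $y$, make the slab trivializations identity at $y$, average, then shrink and patch in $t$) repairs the proof.
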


\begin{proof}
Let a vector bundle $V$ over $X\times[0,1]$ be given.

We first discuss the local structure around a given point $x\in X$.
Fix an open embedding $Y/G\hookrightarrow X$ and a lift $y\in Y$ of $x$ fixed by $G$.
We consider the pullback bundle $V|_{Y\times[0,1]}$ on $Y\times[0,1]$, which is a $G$-equivariant vector bundle.
For each $t\in[0,1]$, there exist open neighborhoods $t\in T\subseteq[0,1]$ and $y\in U\subseteq Y$ such that $V|_{U\times T}$ is trivial.
By compactness of $[0,1]$, we may assume $U$ is independent of $t$.
Replacing $Y$ with $U$, we may assume that for each $t\in[0,1]$ there exists an open neighborhood $t\in T\subseteq[0,1]$ such that $V|_{Y\times T}$ is trivial.
A trivialization induces a map $V|_{Y\times T}\to p_T^*V|_{\{y\}\times T}$ which is the identity over $\{y\}\times T$.
Averaging makes this map $G$-equivariant, so it descends to a map $V|_{Y/G\times T}\to p_T^*V|_{\{y\}/G\times T}$ which is still the identity over $\{y\}\times T$.
It is thus an isomorphism over some $U\times T'$; another compactness argument and shrinking of $Y$ ensures that we have a collection of isomorphisms $V|_{Y/G\times T}\to p_T^*V|_{\{y\}/G\times T}$ which are the identity over $\{y\}\times T$.
Patching these together via a partition of unity on $[0,1]$ and further shrinking $Y$ produces an isomorphism
\begin{equation*}
V|_{Y/G\times[0,1]}\xrightarrow\sim p_{[0,1]}^*V|_{\{y\}/G\times[0,1]}
\end{equation*}
which is the identity over $\{y\}\times[0,1]$.
Any vector bundle over $\BB G\times[0,1]$ is pulled back from $\BB G$, and hence we conclude that $V|_{Y/G\times[0,1]}$ is pulled back from $Y/G$, for some neighborhood $Y/G$ of $x\in X$.

We now globalize.
Begin with an open cover $\left|X\right|=\bigcup_iU_i$ and over each $U_i$ an isomorphism $\xi_i:V|_{U_i\times[0,1]}\to(p_X^*V|_{X\times\{0\}})|_{U_i\times[0,1]}$ which is the identity over $U_i\times\{0\}$.
Note that for any $x\in U_i$ and any $t,t'\in[0,1]$, the map $\xi_i$ determines an isomorphism
\begin{equation*}
V_{(x,t)}\xrightarrow{\xi_i^{-1}\circ\xi_i}V_{(x,t')}
\end{equation*}
which is the specialization of an isomorphism between the pullbacks of $V$ under the two projections $X\times[0,1]^2\to X\times[0,1]$.
Using this observation, we may now patch together the $\xi_i$ into an isomorphism $V\to p_X^*V|_{X\times\{0\}}$ as follows.
Fix a partition of unity $\varphi_i:\left|X\right|\to\RR_{\geq 0}$ subordinate to the open cover $\left|X\right|=\bigcup_iU_i$, and fix an arbitrary total ordering of the index set of the open cover.
For $(x,t)\in X\times[0,1]$, let $i=1,\ldots,k$ denote the indices of the open sets $U_i$ containing $x$, ordered as in the fixed total order, and consider the composition of isomorphisms
\begin{equation*}
V_{(x,t)}\xrightarrow{\xi_1^{-1}\circ\xi_1}V_{(x,(1-\varphi_1(x))\cdot t)}\xrightarrow{\xi_2^{-1}\circ\xi_2}V_{(x,(1-\varphi_1(x)-\varphi_2(x))\cdot t)}\xrightarrow{\xi_3^{-1}\circ\xi_3}\cdots\xrightarrow{\xi_k^{-1}\circ\xi_k}V_{(x,0)}.
\end{equation*}
This fiberwise description now translates into the desired isomorphism of vector bundles $V\to p_X^*|_{X\times\{0\}}$.
\end{proof}

Two maps $X\to Y$ will be called homotopic iff there exists a map $X\times[0,1]\to Y$ whose restrictions to $X\times\{0\}$ and $X\times\{1\}$ are the two given maps.
A map $X\to Y$ is called a homotopy equivalence iff there exists a map $Y\to X$ such that the compositions $X\to Y\to X$ and $Y\to X\to Y$ are homotopic to the respective identity maps.

\begin{corollary}[Homotopy Invariance]\label{htpyabsolute}
If $X$ is a paracompact orbispace, then homotopic maps $X\to Y$ induce the same map $K^0(Y)\to K^0(X)$.
\qed
\end{corollary}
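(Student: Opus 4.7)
The plan is to deduce this directly from Lemma \ref{intervalpullback} together with the functoriality of group completion. Given homotopic maps $f_0, f_1 : X \to Y$, we have a homotopy $H : X \times [0,1] \to Y$ with $H \circ i_0 = f_0$ and $H \circ i_1 = f_1$, where $i_t : X \to X \times [0,1]$ denotes the inclusion at level $t$. The goal is to show $f_0^* = f_1^*$ as maps $K^0(Y) \to K^0(X)$.

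Since group completion is functorial, it suffices to show $f_0^* = f_1^*$ already as maps of abelian monoids $\Vect(Y) \to \Vect(X)$. So let $V$ be a vector bundle of bounded rank on $Y$, and form $H^*V$ on $X \times [0,1]$; this is still of bounded rank. By Lemma \ref{intervalpullback} (applicable because $X$ paracompact implies $X \times [0,1]$ paracompact), there is a vector bundle $W$ on $X$ with an isomorphism $H^*V \cong p_X^* W$. Then
\begin{equation*}
f_0^* V = i_0^* H^* V \cong i_0^* p_X^* W = W \cong i_1^* p_X^* W = i_1^* H^* V = f_1^* V,
\end{equation*}
so $f_0^* V$ and $f_1^* V$ represent the same element of $\Vect(X)$.

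There is no real obstacle here: all of the work is in Lemma \ref{intervalpullback}, and the present corollary is essentially a formal consequence. The only point worth checking carefully is that passage to group completion preserves the equality of monoid homomorphisms, which is immediate from the universal property defining $K^0$ as the group completion of $\Vect$. No additional hypotheses on $Y$ are required, since $Y$ enters only through pullback of bundles, and the paracompactness hypothesis on $X$ is used solely to invoke Lemma \ref{intervalpullback} on $X \times [0,1]$.
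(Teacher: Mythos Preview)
Your proposal is correct and is precisely the argument the paper leaves implicit: the paper gives no proof at all (just \qed), treating the corollary as an immediate consequence of Lemma~\ref{intervalpullback}, and you have spelled out that deduction correctly. One minor remark: Lemma~\ref{intervalpullback} is stated for a paracompact orbispace $X$ and already concerns bundles on $X\times[0,1]$, so there is no need to invoke paracompactness of $X\times[0,1]$ separately.
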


For any map of abelian monoids $M'\to M$, the quotient $M/M'$ is, as a set, the quotient of $M$ by the equivalence relation $x\sim_{M'}y$ iff there exist $a,b\in M'$ with $x+a=y+b$, equipped with the descent of the monoid operation from $M$.
The map $M\to M/M'$ is initial among maps from $M$ to an abelian monoid sending $M'$ to zero.
If (the image inside $M$ of) $M'$ is an abelian group, then the kernel of the quotient map $M\to M/M'$ is precisely $M'$.
The group completion $M[-M]$ is the quotient of $M\times M$ by the diagonal submonoid $M$.

\begin{definition}
For any map of stacks $f:Y\to X$, the relative $K$-theory $K^0(X,Y)$ is defined as follows.
We consider the set $\Tri(X,Y)$ of isomorphism classes of triples $(E_0,E_1,i)$ where $E_0,E_1$ are vector bundles on $X$ of bounded rank and $i:f^*E_0\to f^*E_1$ is an isomorphism between their pullbacks to $Y$.
Now $\Tri(X,Y)$ is an abelian monoid under direct sum, and it contains $\Vect(X)$ as the submonoid of triples of the form $(E,E,f^*\id_E)$.
The relative $K$-theory is the quotient
\begin{equation*}
K^0(X,Y):=\Tri(X,Y)/\Vect(X).
\end{equation*}
Note that \emph{a priori} $K^0(X,Y)$ is merely an abelian monoid, not an abelian group.
\end{definition}

A map $(Y\to X)\to(Y'\to X')$ (i.e.\ a commutative square) induces a map $K^0(X',Y')\to K^0(X,Y)$.
There is a natural map $K^0(X,Y)\to K^0(X)$ given by sending $(E_0,E_1,i)$ to the formal difference $[E_0]-[E_1]$, and the composition $K^0(X,Y)\to K^0(X)\to K^0(Y)$ is zero.
The map $K^0(X,\varnothing)\to K^0(X)$ is an isomorphism (since $\Tri(X,\varnothing)$ is simply $\Vect(X)\times\Vect(X)$).

A \emph{pair} $(X,A)$ shall mean that $A$ is a closed substack of $X$.
A map of pairs $(X,A)\to(Y,B)$ means a map $X\to Y$ sending $A$ inside $B$.
Note that paracompactness, coarse finite-dimensionality, being an orbispace, and the hypothesis of Theorem \ref{maingeneral} all pass to closed substacks.

\begin{remark}
Even for very nice orbispace pairs $(X,A)$, we \emph{cannot} in general form the quotient $X/A$ in a reasonable way.
For example, for an orbispace $X$, if we could reasonably define $(X\times[0,1])/(X\times\{0\})$ as an orbispace, it would follow that $X$ is (globally) the quotient of a topological space by a finite group action (a very special property).
\end{remark}

For any pair $(X,A)$, let $\cyl(X,A)$ denote the pair $((X\times\{0\})\cup(A\times[0,1]),A\times\{1\})$.
Here $(X\times\{0\})\cup(A\times[0,1])$ denotes the union of closed substacks of $X\times[0,1]$ (recall that closed substacks of a stack $Z$ are in bijective correspondence with closed subsets of $\left|Z\right|$, so by union of closed substacks we mean union of subsets of $\left|Z\right|$).
There is a natural map of pairs $\cyl(X,A)\to(X,A)$.
A map of pairs $f:(X,A)\to(X',A')$ induces a map $\cyl(f):\cyl(X,A)\to\cyl(X',A')$.

\begin{lemma}\label{relabgrp}
For any pair $(X,A)$ where $A$ is a paracompact orbispace, $K^0(\cyl(X,A))$ is an abelian group.
\end{lemma}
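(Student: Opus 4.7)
Commutativity of the monoid $K^0(\cyl(X,A))$ is automatic, so the plan is to show that every element has an additive inverse. Given $[(E_0,E_1,i)]\in K^0(\cyl(X,A))$, I would show its inverse is $[(E_1,E_0,i^{-1})]$. Their sum is represented by the triple $(E_0\oplus E_1,\,E_1\oplus E_0,\,i\oplus i^{-1})$. Denoting by $\sigma\colon E_1\oplus E_0\xrightarrow\sim E_0\oplus E_1$ the canonical swap (defined over the whole cylinder) and setting
\begin{equation*}
\gamma:=\sigma\circ(i\oplus i^{-1}) = \begin{pmatrix} 0 & i^{-1}\\ i & 0\end{pmatrix}\in\Aut\bigl((E_0\oplus E_1)|_{A\times\{1\}}\bigr),
\end{equation*}
it suffices to produce an automorphism $\Phi$ of $E_0\oplus E_1$ over the entire cylinder $(X\times\{0\})\cup(A\times[0,1])$ whose restriction to $A\times\{1\}$ equals $\gamma$: then taking $F:=E_0\oplus E_1$, $\beta_0:=\Phi$, and $\beta_1:=\sigma$ exhibits $(E_0\oplus E_1,\,E_1\oplus E_0,\,i\oplus i^{-1})$ as isomorphic (as a triple) to the trivial triple $(F,F,\id_F)$, so the sum vanishes in $K^0$.

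To construct $\Phi$, I would first invoke Lemma \ref{intervalpullback} (using paracompactness of $A$) to identify $(E_0\oplus E_1)|_{A\times[0,1]}$ with the pullback of $(E_0\oplus E_1)|_{A\times\{0\}}$ along the projection $A\times[0,1]\to A\times\{0\}$, arranged so that the identification restricts to the identity on $A\times\{0\}$. Under this trivialization, any continuous family $\{\Psi_t\}_{t\in[0,1]}$ of automorphisms of $(E_0\oplus E_1)|_{A\times\{0\}}$ with $\Psi_0=\id$ and $\Psi_1=\gamma$ (transported via the trivialization) determines an automorphism over $A\times[0,1]$; since it starts at $\id$ on $A\times\{0\}$, it glues with $\id$ on $X\times\{0\}$ to yield the required $\Phi$ on the whole cylinder.

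For the path from $\id$ to $\gamma$ itself, I would use the rotation
\begin{equation*}
J_t:=\begin{pmatrix} \cos(\pi t/2)\,I & \sin(\pi t/2)\,i^{-1}\\ -\sin(\pi t/2)\,i & \cos(\pi t/2)\,I\end{pmatrix},\qquad t\in[0,1],
\end{equation*}
which is pointwise invertible because $J_t=\cos(\pi t/2)\,I+\sin(\pi t/2)\,J$ with $J:=\bigl(\begin{smallmatrix} 0 & i^{-1}\\ -i & 0\end{smallmatrix}\bigr)$ satisfying $J^2=-I$, so that $J_t^{-1}=J_{-t}$. This is a path from $\id$ at $t=0$ to $J$ at $t=1$; since $\gamma=J\cdot\mathrm{diag}(-I_{E_0},I_{E_1})$ and $\mathrm{diag}(-I_{E_0},I_{E_1})$ is homotopic to $\id$ by scaling the $E_0$-summand along any path in $\CC^\times$ from $1$ to $-1$, concatenating yields a path from $\id$ to $\gamma$.

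The main step requiring care is pointwise invertibility of the interpolating family: a naive linear path $(1-t)\id+t\gamma$ becomes singular at $t=1/2$, which is why a genuine rotation (using the relation $J^2=-I$) is needed. Beyond this subtlety the argument is essentially formal, and the paracompactness hypothesis on $A$ enters only through Lemma \ref{intervalpullback}.
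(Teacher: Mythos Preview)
Your proof is correct and follows essentially the same strategy as the paper: propose $(E_1,E_0,\pm i^{-1})$ as an inverse, reduce to showing that the resulting off-diagonal automorphism of $(E_0\oplus E_1)|_{A\times\{1\}}$ extends over the whole cylinder, trivialize along $A\times[0,1]$ via Lemma \ref{intervalpullback}, and write down an explicit rotation interpolating to the identity. The only cosmetic difference is that the paper takes the inverse triple to be $(E_1,E_0,-i^{-1})$, so that its rotation $\bigl(\begin{smallmatrix}\cos\frac{\pi t}{2}&-i\sin\frac{\pi t}{2}\\ i\sin\frac{\pi t}{2}&\cos\frac{\pi t}{2}\end{smallmatrix}\bigr)$ lands directly on the target at $t=1$; your choice of $i^{-1}$ forces the extra concatenation with $\mathrm{diag}(e^{i\pi s},1)$, which is harmless.
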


\begin{proof}
Let $(E_0,E_1,i:E_0|_{A\times\{1\}}\to E_1|_{A\times\{1\}})$ be a triple representing an arbitrary element of $K^0(\cyl(X,A))$.
We claim that the triple $(E_1,E_0,-i)$ is an inverse to it.
It suffices to show that
\begin{equation*}
(E_0\oplus E_1,E_0\oplus E_1,(\begin{smallmatrix}0&-i\\i&\hfill 0\end{smallmatrix}))\quad\text{and}\quad(E_0\oplus E_1,E_0\oplus E_1,(\begin{smallmatrix}1&0\\0&1\end{smallmatrix}))
\end{equation*}
are isomorphic.
In other words, it suffices to show that $(\begin{smallmatrix}0&-i\\i&\hfill 0\end{smallmatrix})$ is the restriction to $A\times\{1\}$ of an automorphism of $E_0\oplus E_1$.
By Lemma \ref{intervalpullback}, there exist isomorphisms $E_i|_{A\times[0,1]}=p_A^*(E_i|_{A\times\{1\}})$.
In such coordinates, the desired automorphism of $E_0\oplus E_1$ may be given by
\begin{equation*}
\left(\begin{matrix}\hfill\cos\frac\pi 2t&-i\sin\frac\pi 2t\\i\sin\frac\pi 2t&\hfill\cos\frac\pi 2t\end{matrix}\right)
\end{equation*}
on $A\times[0,1]$ and the identity on $X\times\{0\}$ (note that to specify a map of vector bundles over a stack, it suffices to specify its restriction to each member of a finite cover by closed substacks, subject to the requirement that these restrictions agree on their pairwise overlaps).
\end{proof}

\begin{lemma}\label{kzerocylsurjective}
For any pair $(X,A)$ where $A$ is a paracompact orbispace, the map $K^0(X,A)\to K^0(\cyl(X,A))$ is surjective.
\end{lemma}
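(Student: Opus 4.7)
The plan is to take a triple $(E_0,E_1,i)$ representing a class in $K^0(\cyl(X,A))$ (so $E_0,E_1$ are vector bundles on $(X\times\{0\})\cup(A\times[0,1])$ and $i\colon E_0|_{A\times\{1\}}\to E_1|_{A\times\{1\}}$) and explicitly construct a preimage in $K^0(X,A)$. Write $p\colon (X\times\{0\})\cup(A\times[0,1])\to X$ for the projection.

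The key input is Lemma \ref{intervalpullback}, applied to the paracompact orbispace $A$: each vector bundle $E_j|_{A\times[0,1]}$ on $A\times[0,1]$ is pulled back from $A$. Inspecting the proof, the isomorphism produced can be taken to be the identity over $A\times\{0\}$; call it $\phi_j\colon E_j|_{A\times[0,1]}\xrightarrow{\sim}p_A^*(E_j|_{A\times\{0\}})$. Now set $F_j:=E_j|_{X\times\{0\}}$, a vector bundle on $X$. Since $\phi_j$ restricts to the identity on $A\times\{0\}$, the pair consisting of $\phi_j$ on $A\times[0,1]$ and $\id_{F_j}$ on $X\times\{0\}$ glues to an isomorphism $\Phi_j\colon E_j\xrightarrow{\sim}p^*F_j$ over the entire cylinder domain $(X\times\{0\})\cup(A\times[0,1])$.

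Define $j\colon F_0|_A\to F_1|_A$ to be the isomorphism obtained by conjugating $i$ through the restrictions of $\Phi_0,\Phi_1$ to $A\times\{1\}$, i.e.\ $j:=\Phi_1|_{A\times\{1\}}\circ i\circ\Phi_0|_{A\times\{1\}}^{-1}$, using the tautological identification $p^*F_j|_{A\times\{1\}}=F_j|_A$. Then $(F_0,F_1,j)$ is a triple representing a class in $K^0(X,A)$, and its pullback to $\cyl(X,A)$ is $(p^*F_0,p^*F_1,j)$. By construction $(\Phi_0,\Phi_1)$ is an isomorphism of triples from $(E_0,E_1,i)$ to $(p^*F_0,p^*F_1,j)$, so the two classes agree in $K^0(\cyl(X,A))$. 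This gives the desired surjectivity.

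There is essentially no hard step: once Lemma \ref{intervalpullback} is available in the strengthened form (isomorphism equal to the identity on the zero slice, which is what its proof actually produces), everything is bookkeeping. The only point deserving care is the consistency check at $A\times\{0\}$ that lets $\phi_j$ glue with $\id_{F_j}$ into a global $\Phi_j$; this is exactly why we need the normalization of $\phi_j$ over $A\times\{0\}$.
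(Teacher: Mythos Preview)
Your proof is correct and follows essentially the same approach as the paper: both arguments use Lemma \ref{intervalpullback} (with the normalization that the trivialization is the identity over $A\times\{0\}$) to show that every vector bundle on $(X\times\{0\})\cup(A\times[0,1])$ is isomorphic to the pullback of its restriction to $X\times\{0\}$, from which surjectivity is immediate. The paper states this pullback claim abstractly and leaves the translation to triples implicit, whereas you spell it out explicitly; the content is the same.
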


\begin{proof}
We claim that every vector bundle on $(X\times\{0\})\cup(A\times[0,1])$ is pulled back from $X$.
Let $E$ be a vector bundle on $\cyl(X,A)$, let $F$ denote its restriction to $X=X\times\{0\}$, and let us show that $E=p_X^*F$.
The identity map is an identification of $E$ and $p_X^*F$ over $X\times\{0\}$, and by Lemma \ref{intervalpullback} there is an identification of $E$ and $p_X^*F$ over $A\times[0,1]$ which agrees with the identity over $A\times\{0\}$.
These isomorphisms $E\to p_X^*F$ thus patch together to define the desired isomorphism.

Since every vector bundle on $(X\times\{0\})\cup(A\times[0,1])$ is pulled back from $X$, it follows that $\Tri(X,A)\to\Tri(\cyl(X,A))$ is surjective, so we are done.
\end{proof}

\begin{lemma}\label{vborbispacelocal}
For every vector bundle $E$ over an orbispace $X$, there exists an open cover $X=\bigcup_iZ_i/G_i$ such that the restriction of $E$ to each $Z_i/G_i$ is pulled back from $*/G_i$ (equivalently, the pullback of $E$ to $Z_i$ is $G_i$-equivariantly trivial).
\end{lemma}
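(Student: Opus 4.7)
The plan is to produce the required cover one point at a time: for each $x \in X$ I will construct an open neighborhood $Z/G_x \hookrightarrow X$ of $x$ over which the pullback of $E$ to $Z$ is $G_x$-equivariantly trivial. By the construction in the proof of Proposition \ref{orbispacelocal}, there exists an open embedding $V/G \hookrightarrow X$ with $V$ Hausdorff, $G$ a finite group naturally identified with $G_x$, and a point $v \in V$ fixed by $G$ mapping to $x$. Writing $W := E_v$ for the fiber (a finite-dimensional $G$-representation), it will suffice to produce a $G$-invariant open neighborhood $Z \subseteq V$ of $v$ together with a $G$-equivariant isomorphism $E|_Z \cong Z \times W$: then $E|_{Z/G}$ descends to the pullback of $W$ from $\BB G = */G$, and ranging over $x$ yields the desired cover.

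To construct this trivialization I would use the standard averaging trick. First, choose any (non-equivariant) local trivialization $\varphi : E|_U \to U \times W$ of the pullback of $E$ on some open neighborhood $U \subseteq V$ of $v$; by composing with a linear automorphism of $W$ one may arrange $\varphi$ to be the identity on the fiber over $v$. Since $G$ is finite, replace $U$ with the $G$-invariant open neighborhood $\bigcap_{g \in G} gU$ of $v$. Then define
\begin{equation*}
\tilde\varphi(e) := \tfrac{1}{|G|}\sum_{g \in G} g^{-1}\cdot \varphi(g\cdot e),
\end{equation*}
where $G$ acts on $E|_U$ via the given action and diagonally on $U \times W$. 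A direct check shows that $\tilde\varphi$ is a $G$-equivariant fiberwise linear bundle map. Moreover, since $v$ is $G$-fixed and $\varphi$ is the identity on $E_v = W$, each summand in the sum equals the identity on $E_v$, so $\tilde\varphi|_{E_v} = \id$.

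To conclude, take $Z \subseteq U$ to be the open locus on which $\tilde\varphi$ is a fiberwise isomorphism: this locus is open (as the non-vanishing locus of the determinant in any local trivialization), $G$-invariant (by equivariance of $\tilde\varphi$), and contains $v$. The restriction $\tilde\varphi|_Z : E|_Z \xrightarrow{\sim} Z \times W$ is then the desired $G$-equivariant trivialization. There is no serious obstacle in this proof — it is the classical equivariant linearization at a fixed point — but it is essential to begin from a chart $V/G$ with $G = G_x$ and with $x$ lifting to an actual fixed point of the $G$-action, rather than an arbitrary chart around $x$; otherwise there is no preferred representation $W$ of $G_x$ against which to average. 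The chart produced by the proof of Proposition \ref{orbispacelocal} already has this form, so this costs nothing.
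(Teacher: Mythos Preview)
Your proof is correct and follows essentially the same approach as the paper: pick a chart $Z/G$ around $x$ with $G=G_x$ fixing a lift of $x$, choose a bundle map from $E|_Z$ to the trivial bundle $E_x\times Z$ which is the identity at $x$, average over $G$, and shrink $Z$ to the locus where the averaged map is an isomorphism. The only cosmetic difference is that the paper obtains the fixed-point chart by starting from an arbitrary chart and shrinking/replacing $G$ by a stabilizer, whereas you appeal directly to the specific chart produced inside the proof of Proposition~\ref{orbispacelocal}.
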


\begin{proof}
Every point $x\in X$ has an open neighborhood of the form $Z/G$ by Proposition \ref{orbispacelocal}.
Choose a lift $z\in Z$ of $x$, and by shrinking $Z$ and replacing $G$ with the stabilizer of $z$, assume that $G$ fixes $z$.
Choose a map from $E|_Z$ to the trivial bundle $E_z\times Z$ which is the identity at $z$, average this map to make it $G$-equivariant, and shrink $Z$ so that this map is an isomorphism over all of $Z$.
\end{proof}

\begin{lemma}\label{paracompactquotient}
Let $G\acts Z$ be a finite group action on a Hausdorff topological space.
If $Z/G$ is paracompact, then so is $Z$.
\end{lemma}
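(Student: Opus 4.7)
The plan is to exploit the fact that the quotient map $\pi\colon Z\to Z/G$ is closed with finite fibers. Closedness follows because for any closed $C\subseteq Z$, its saturation $\pi^{-1}(\pi(C))=\bigcup_{g\in G}gC$ is a finite union of closed sets, hence closed, and $\pi$ is a quotient map. Given an arbitrary open cover $\{U_\alpha\}_\alpha$ of $Z$, I want to produce a locally finite refinement, and the strategy is to pass the problem down to $Z/G$, apply paracompactness there, and then pull back.

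The first step is to construct, for each $\bar z\in Z/G$, a ``tube'' neighborhood. The fiber $\pi^{-1}(\bar z)$ is finite, so I can choose an index $\alpha(z)$ with $z\in U_{\alpha(z)}$ for each $z\in\pi^{-1}(\bar z)$, and set $V_{\bar z}:=\bigcup_{z\in\pi^{-1}(\bar z)}U_{\alpha(z)}$. Then $V_{\bar z}$ is open in $Z$ and contains the whole fiber, so because $\pi$ is closed,
\begin{equation*}
W_{\bar z}:=(Z/G)\setminus\pi(Z\setminus V_{\bar z})
\end{equation*}
is an open neighborhood of $\bar z$ in $Z/G$ satisfying $\pi^{-1}(W_{\bar z})\subseteq V_{\bar z}$.

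Next, since $Z/G$ is paracompact, I can refine $\{W_{\bar z}\}_{\bar z\in Z/G}$ to a locally finite open cover $\{W'_\beta\}_\beta$, with each $W'_\beta\subseteq W_{\bar z(\beta)}$ for some $\bar z(\beta)$. I then take as my refinement of $\{U_\alpha\}$ the family of open sets
\begin{equation*}
\bigl\{\,U_{\alpha(z)}\cap\pi^{-1}(W'_\beta)\,\bigr\}_{\beta,\,z\in\pi^{-1}(\bar z(\beta))}.
\end{equation*}
By construction each is contained in some $U_\alpha$, so this is a refinement. It covers $Z$: given $w\in Z$, pick $\beta$ with $\pi(w)\in W'_\beta$; then $w\in\pi^{-1}(W'_\beta)\subseteq V_{\bar z(\beta)}$, so $w\in U_{\alpha(z)}$ for some $z\in\pi^{-1}(\bar z(\beta))$.

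Finally, local finiteness: given $w\in Z$, choose an open neighborhood $N$ of $\pi(w)$ in $Z/G$ meeting only finitely many $W'_\beta$, say $\beta_1,\ldots,\beta_k$. Then $\pi^{-1}(N)$ is an open neighborhood of $w$ that can intersect $U_{\alpha(z)}\cap\pi^{-1}(W'_\beta)$ only when $\beta\in\{\beta_1,\ldots,\beta_k\}$, and for each such $\beta_i$ only finitely many $z\in\pi^{-1}(\bar z(\beta_i))$ are available since the fiber is finite. The genuinely content-bearing step is the first one, namely realizing that $\pi$ is closed and using Hausdorffness only through the already-established fact that fibers are finite; everything after that is a formal manipulation, so the ``main obstacle'' is really just spotting the closed-map trick that lets paracompactness descend from the quotient.
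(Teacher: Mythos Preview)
Your proof is correct and takes a genuinely different route from the paper. You argue via the closedness of the quotient map $\pi$ (which holds for any finite group action, Hausdorff or not), using the standard trick $W_{\bar z}=(Z/G)\setminus\pi(Z\setminus V_{\bar z})$ to produce saturated neighborhoods; this is essentially the proof that closed maps with finite fibers reflect paracompactness. The paper instead uses the Hausdorff hypothesis directly: for each $z\in Z$ it separates the points of the orbit $Gz$ into pairwise disjoint neighborhoods, obtaining a $G_z$-invariant open set $U$ whose $G/G_z$-translates are disjoint and each contained in some member of the given cover, so that $\pi(U)$ is automatically open in $Z/G$; one then refines downstairs and pulls back. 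Your approach is slightly more general---in fact it never uses Hausdorffness at all (your closing remark is a bit off: the finiteness of fibers comes from $|G|<\infty$, not from separation)---and is the more portable argument; the paper's is more hands-on and matches the ``slice neighborhood'' picture used elsewhere in that section.
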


\begin{proof}
Let an open covering $Z=\bigcup_\alpha V_\alpha$ be given.
For every $z\in Z$, there exists an open neighborhood $U$ of $z$ which is $G_z$-invariant and whose $G/G_z$-translates are disjoint and each contained in some (possibly different) $V_\alpha$.
The images of such $U$ in $Z/G$ form an open covering of $Z/G$.
Refining this to a locally finite covering and taking inverse images in $Z$ gives the desired locally finite refinement of the original covering.
\end{proof}

\begin{lemma}\label{sectionextension}
Let $X$ be a paracompact orbispace, let $E$ be a vector bundle over $X$, and let $Y\subseteq X$ be a closed substack.
Every section of $E|_Y$ extends to a section of $E$.
\end{lemma}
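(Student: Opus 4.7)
The plan is to reduce the global extension problem to a local equivariant one via a partition of unity, and then solve the local problem by ordinary Tietze extension followed by averaging over the isotropy group.

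By Lemma \ref{vborbispacelocal}, I would cover $X$ by open substacks of the form $Z_\alpha/G_\alpha$ on which the restriction of $E$ is pulled back from $\BB G_\alpha$. Under this identification, sections of $E$ over $Z_\alpha/G_\alpha$ correspond to $G_\alpha$-equivariant continuous maps $Z_\alpha\to V_\alpha$, where $V_\alpha$ is the fiber (a $G_\alpha$-representation). Using paracompactness of $\left|X\right|$, I would refine so that the resulting cover $\{\left|Z_\alpha/G_\alpha\right|\subseteq\left|X\right|\}$ is locally finite and admits a subordinate partition of unity $\{\varphi_\alpha\}$, and shrink each chart (with compact-closure-like control inside a larger chart) so that $\left|Z_\alpha/G_\alpha\right|$ and hence, by Lemma \ref{paracompactquotient}, $Z_\alpha$ itself is paracompact Hausdorff and hence normal.

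The local extension step goes as follows. Let $Y_\alpha\subseteq Z_\alpha$ be the preimage of $Y$, a closed subset, and let $s_\alpha:Y_\alpha\to V_\alpha$ be the $G_\alpha$-equivariant map corresponding to the restriction of the given section $s$. Applying Tietze extension to each coordinate of $V_\alpha$ (legitimate since $Z_\alpha$ is normal) yields a continuous, not necessarily equivariant, extension $\tilde s_\alpha^0:Z_\alpha\to V_\alpha$. Averaging
\begin{equation*}
\tilde s_\alpha(z):=\frac{1}{|G_\alpha|}\sum_{g\in G_\alpha}g^{-1}\tilde s_\alpha^0(gz)
\end{equation*}
produces a $G_\alpha$-equivariant extension, which still equals $s_\alpha$ on $Y_\alpha$ because $s_\alpha$ was already equivariant. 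Descending through $Z_\alpha\to Z_\alpha/G_\alpha$, this gives a section $\tilde s_\alpha$ of $E$ over $Z_\alpha/G_\alpha$ extending $s|_{Y\cap(Z_\alpha/G_\alpha)}$.

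Finally, I patch by setting $\tilde s:=\sum_\alpha\varphi_\alpha\tilde s_\alpha$, where each $\varphi_\alpha\tilde s_\alpha$ is extended by zero outside $\left|Z_\alpha/G_\alpha\right|$. Local finiteness of the cover makes this a well-defined continuous section of $E$ on $X$, and on $Y$ each summand equals $\varphi_\alpha\cdot s$, so $\sum_\alpha\varphi_\alpha\equiv 1$ gives $\tilde s|_Y=s$. The only real obstacle is ensuring normality of the charts $Z_\alpha$ so that Tietze applies: since paracompactness of $\left|X\right|$ is not inherited by arbitrary open substacks, one must choose the refinement carefully (via relatively compact shrinkings inside fixed larger charts) and then invoke Lemma \ref{paracompactquotient} to pass paracompactness from $\left|Z_\alpha/G_\alpha\right|$ upstairs to $Z_\alpha$.
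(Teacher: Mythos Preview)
Your overall strategy---local Tietze extension on equivariant charts, averaging to restore equivariance, and patching via a partition of unity---is exactly the paper's approach, and the student correctly identifies the one nontrivial point: getting normality on the chart so that Tietze applies.

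The only imprecision is in how you propose to resolve that point. You suggest shrinking the charts ``with compact-closure-like control'' so that the open set $\left|Z_\alpha/G_\alpha\right|$ becomes paracompact. But $\left|X\right|$ need not be locally compact, and an open subset of a paracompact Hausdorff space is not in general paracompact, so this step as written is not justified. The paper sidesteps this cleanly: rather than trying to make the open chart paracompact, it does the Tietze extension over the \emph{closed} set $\supp\varphi_i\subseteq\left|X\right|$, which is automatically paracompact Hausdorff (closed in paracompact Hausdorff), and whose preimage in $Z_i$ is then paracompact Hausdorff by Lemma~\ref{paracompactquotient}. After extending and averaging on each $\supp\varphi_i$, the paper then introduces a \emph{second} partition of unity $\{\psi_i\}$ subordinate to the open cover $\{(\supp\varphi_i)^\circ\}$ to do the patching. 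Your single-partition-of-unity scheme can be repaired along these lines; alternatively, one could argue that $\{\varphi_\alpha>0\}$ is an $F_\sigma$ in a paracompact Hausdorff space and hence paracompact, but that requires an extra general-topology fact not in the paper.
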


\begin{proof}
Fix an open cover $X=\bigcup_iZ_i/G_i$ such that the pullback of $E$ to each $Z_i$ is trivialized $G_i$-equivariantly (Lemma \ref{vborbispacelocal}).
Let $\varphi_i:\left|X\right|\to[0,1]$ be a partition of unity subordinate to this covering.
Now $\supp\varphi_i$ is a closed subset of a paracompact Hausdorff space, hence paracompact Hausdorff; its inverse image inside $Z_i$ is thus paracompact Hausdorff by Lemma \ref{paracompactquotient} (recall $Z_i$ is Hausdorff).
Thus by the Tietze extension theorem, our given section on $Z_i\times_X(Y\cap\supp\varphi_i)$ extends to $Z_i\times_X\supp\varphi_i$.
We can make it $G_i$-equivariant by averaging, so our given section on $Y\cap\supp\varphi_i$ extends to $\supp\varphi_i$.
Now $(\supp\varphi_i)^\circ$ is an open cover of $X$, so pick another partition of unity $\psi_i$ subordinate to this cover, and use it to patch together the extended sections on each $\supp\varphi_i$.
\end{proof}

\begin{lemma}\label{kzerocyl}
For any paracompact orbispace pair $(X,A)$, the map $K^0(X,A)\to K^0(\cyl(X,A))$ is an isomorphism.
\end{lemma}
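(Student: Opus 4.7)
Surjectivity is already established by Lemma~\ref{kzerocylsurjective}, so I focus on injectivity. Suppose a triple $(E_0, E_1, i) \in K^0(X, A)$ has zero image in $K^0(\cyl(X, A))$; unpacking the definition, there exist a vector bundle $W$ on $\cyl(X, A)$ and an isomorphism $\eta' \colon p^*E_0 \oplus W \to p^*E_1 \oplus W$ such that $\eta'|_{A \times \{1\}} = \tilde i \oplus \id_W$, where $p \colon \cyl(X, A) \to X$ is the projection. My first step is to apply Lemma~\ref{intervalpullback} to $W|_{A \times [0, 1]}$, choosing the trivialization to be the identity at $t = 0$; this identifies $W$ with $p^*V$ for $V := W|_{X \times \{0\}}$. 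Under the identification, $\eta'$ becomes $\eta'' \colon p^*(E_0 \oplus V) \to p^*(E_1 \oplus V)$ on $\cyl(X, A)$ with $\eta''|_{A \times \{1\}} = i \oplus \id_V$, and setting $\eta := \eta''|_{X \times \{0\}}$ gives an isomorphism $E_0 \oplus V \to E_1 \oplus V$ on $X$. The continuous family $\eta''|_{A \times \{t\}}$ exhibits $\eta|_A$ as homotopic to $i \oplus \id_V$ through isomorphisms $(E_0 \oplus V)|_A \to (E_1 \oplus V)|_A$.

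Writing $F := E_1 \oplus V$, set $\sigma := (i \oplus \id_V) \circ \eta|_A^{-1} \in \Aut(F|_A)$ and $\sigma_t := \eta''|_{A \times \{t\}} \circ \eta|_A^{-1}$, so $\sigma_0 = \id$ and $\sigma_1 = \sigma$. The proof reduces to producing a vector bundle $V'$ on $X$ and an automorphism $\Sigma \in \Aut(F \oplus V')$ on $X$ with $\Sigma|_A = \sigma \oplus \id_{V'|_A}$: granted such $\Sigma$, the composition $\Sigma^{-1} \circ (\eta \oplus \id_{V'})$ is an isomorphism $E_0 \oplus V \oplus V' \to E_1 \oplus V \oplus V'$ on $X$ restricting on $A$ to $i \oplus \id_{V \oplus V'}$, which exhibits $(E_0, E_1, i) = 0$ in $K^0(X, A)$.

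To build $\Sigma$, I equip $F$ with a Hermitian metric via Lemma~\ref{paracompactmetric} and, using fiberwise polar decomposition, replace $\sigma_t$ with a homotopic unitary path. The null-homotopy $\sigma_t$ on $A \times [0, 1]$, glued with the identity on $X \times \{0\}$, defines an automorphism $\hat\sigma$ of $p^*F$ on the closed substack $Y := (X \times \{0\}) \cup (A \times [0, 1]) \subseteq X \times [0, 1]$ that restricts to the identity on $X \times \{0\}$. I factor $\hat\sigma \oplus \hat\sigma^{-1}$ as a product of elementary unipotent automorphisms of $p^*(F \oplus F)|_Y$ via the classical Whitehead identity; each off-diagonal entry extends to a section on $X \times [0, 1]$ by Lemma~\ref{sectionextension}, and the extended factors remain automorphisms because unipotence is preserved. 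Their product is an automorphism of $p^*(F \oplus F)$ on $X \times [0, 1]$ extending $\hat\sigma \oplus \hat\sigma^{-1}$; restricting to $X \times \{1\}$ yields an automorphism of $F \oplus F$ on $X$ whose $A$-restriction is $\sigma \oplus \sigma^{-1}$.

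The main obstacle is converting this $\sigma \oplus \sigma^{-1}$ on $A$ into the desired $\sigma \oplus \id$, because the plain Whitehead factorization produces the symmetric form. The null-homotopy $\sigma_t$ starting at the identity is exactly what resolves this: it allows one to lift $\sigma_t$ fiberwise through the surjective exponential $\exp \colon \mathfrak{u} \to U$ to a continuous path $A_t$ of skew-Hermitian endomorphisms of $F|_A$ with $A_0 = 0$, locally well-defined because $\exp$ is a local diffeomorphism near $0$ and globally well-defined because the path is contractible to $0$. Extending $A := A_1$ to a global section $\tilde A$ of the skew-Hermitian endomorphism bundle of $F$ on all of $X$ via Lemma~\ref{sectionextension}, the automorphism $\Sigma := \exp(\tilde A)$ is a unitary automorphism of $F$ on $X$ satisfying $\Sigma|_A = \sigma$, so one may take $V'$ to be the zero bundle and no stabilization is needed.
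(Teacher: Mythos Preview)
Your overall strategy is sound, and the reduction to finding $\Sigma\in\Aut(F\oplus V')$ on $X$ with $\Sigma|_A=\sigma\oplus\id$ is correct (up to the minor point that you want $\Sigma$ rather than $\Sigma^{-1}$ in the final composition, since $\sigma\circ\eta|_A=i\oplus\id_V$). The gap is in the logarithm step. The exponential $\exp:\mathfrak u(n)\to U(n)$ is neither a covering nor a fibration for $n\geq 2$: it has critical points (at $v$ with $\operatorname{ad}(v)$ having eigenvalues in $2\pi i\ZZ\setminus\{0\}$), and its fibers are not even discrete (e.g.\ $\exp^{-1}(\id)$ contains entire conjugacy classes such as that of $\operatorname{diag}(0,2\pi i)$). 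Consequently there is no path-lifting property, and your justification ``globally well-defined because the path is contractible to $0$'' does not hold. A concrete obstruction already appears for $A=S^2$ and $F=\underline\CC^2$: take $\sigma_t(p)=\exp(2t\pi p)$ for $t\leq\tfrac12$ (so $\sigma_{1/2}\equiv -I$) and $\sigma_t(p)=\exp((2-2t)\pi\,\mathbf i)$ for $t\geq\tfrac12$; the unique small-time lift $A_t(p)=2t\pi p$ cannot be continued past $t=\tfrac12$ at points $p\perp\mathbf i$, since $\exp$ is critical along $|v|=\pi$ and nearby values of $\sigma_t$ lie in the $\mathbf i$-direction. Your Whitehead detour does produce an extension of $\sigma\oplus\sigma^{-1}$, but as you observed this only shows $[(F,F,\sigma)]+[(F,F,\sigma^{-1})]=0$, not $[(F,F,\sigma)]=0$, so it does not close the argument.

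The fix is exactly the paper's device, applied to your $\hat\sigma$ rather than to $\eta''$ directly. View $\hat\sigma$ as a section of $\End(p^*F)$ over the closed substack $Y=(X\times\{0\})\cup(A\times[0,1])\subseteq X\times[0,1]$ and extend it to all of $X\times[0,1]$ by Lemma~\ref{sectionextension}. The extension is an isomorphism on an open neighborhood of $Y$; since $X\times[0,1]\to X$ is proper (pullback of $[0,1]\to *$), this neighborhood contains $U\times[0,1]$ for some open $U\supseteq A$. Choose by Urysohn a continuous $\varphi:|X|\to[0,1]$ supported in $U$ with $\varphi|_A\equiv 1$, and pull back the extended section under the graph $x\mapsto(x,\varphi(x))$. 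The result is an automorphism $\Sigma$ of $F$ on all of $X$ with $\Sigma|_A=\hat\sigma|_{A\times\{1\}}=\sigma$, so no stabilization is needed and $V'=0$ works. This is precisely how the paper proves injectivity (there phrased as injectivity on isomorphism classes of triples), and it replaces both the Whitehead factorization and the logarithm in one stroke.
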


\begin{proof}
We know from the proof of Lemma \ref{kzerocylsurjective} that every vector bundle on $(X\times\{0\})\cup(A\times[0,1])$ is pulled back from $X$.
Given this fact, it suffices to show that the pullback map $\Tri(X,A)\to\Tri(\cyl(X,A))$ is injective.
Thus suppose that we are given two triples $(E_0,E_1,i:E_0|_A\to E_1|_A)$ and $(E_0',E_1',i':E_0'|_A\to E_1'|_A)$ whose pullbacks to $\cyl(X,A)$ coincide, meaning that there are isomorphisms $\alpha_i:p_X^*E_i\to p_X^*E_i'$ on $(X\times\{0\})\cup(A\times[0,1])$ intertwining $i$ and $i'$ over $A\times\{1\}$.
Now these maps $\alpha_i$ extend to all of $X\times[0,1]$ by Lemma \ref{sectionextension}, and these extended maps are isomorphisms over open neighborhood of $(X\times\{0\})\cup(A\times[0,1])$.
Since $X\times[0,1]\to X$ is universally closed (being a pullback of $[0,1]\to *$), this neighborhood contains $U\times[0,1]$ for some open neighborhood $U$ of $A\subseteq X$.
Now a paracompact Hausdorff space is normal, so by Urysohn's lemma there exists a continuous function $\varphi:\left|X\right|\to[0,1]$ supported inside $U$ which is identically $1$ on $A$.
Pulling back $\alpha_i$ under the graph of $\varphi$ defines isomorphisms $E_i\to E_i'$ whose restrictions to $A$ intertwine $i$ and $i'$, showing that the original triples on $(X,A)$ are isomorphic.
\end{proof}

\begin{proposition}[Relative Homotopy Invariance]\label{relhtpyinv}
For any map of paracompact orbispace pairs $(X,A)\to(X',A')$ whose constituent maps $X\to X'$ and $A\to A'$ are individually homotopy equivalences, the map $K^0(X',A')\to K^0(X,A)$ is an isomorphism.
\end{proposition}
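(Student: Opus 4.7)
The plan is, via Lemma \ref{kzerocyl}, to replace the problem with showing that the map $\cyl(f):\cyl(X,A)\to\cyl(X',A')$ (where $f:(X,A)\to(X',A')$ denotes the given map) induces an isomorphism on $K^0$, and then to establish this by exhibiting $\cyl(f)$ as a homotopy equivalence of pairs and applying a relative version of homotopy invariance of $K^0$.

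First I would establish a relative analogue of Corollary \ref{htpyabsolute}: any two maps of paracompact orbispace pairs $(X,A)\to(X',A')$ related by a homotopy of maps of pairs (i.e.\ by a single map $F:(X\times[0,1],A\times[0,1])\to(X',A')$) induce the same map $K^0(X',A')\to K^0(X,A)$. Given a triple $(E_0',E_1',i')$ on $(X',A')$, Lemma \ref{intervalpullback} applied to $X$ identifies each $F^*E_j'$ with $p_X^*(F^*E_j'|_{X\times\{0\}})$, producing canonical isomorphisms $f_0^*E_j'\cong f_1^*E_j'$ on $X$; applying Lemma \ref{intervalpullback} to $A$ (for the interpolated bundles on $A\times[0,1]$ along which $F^*i'$ lives) then shows that these isomorphisms intertwine $f_0^*i'$ with $f_1^*i'$ over $A$. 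Consequently, any map of pairs admitting an inverse up to such homotopies induces an isomorphism on $K^0$ of the pair.

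Next I would exhibit $\cyl(f)$ as a homotopy equivalence of pairs. Choose homotopy inverses $g:X'\to X$ to $f$ and $h:A'\to A$ to $f|_A$; these need not be compatible with the inclusions $A\subseteq X$ and $A'\subseteq X'$, but the chain $g|_{A'}\sim g(fh)=(gf)h\sim h$ (using $fh\sim\id_{A'}$ postcomposed with $g$, then $gf\sim\id_X$ precomposed with $h$) produces a homotopy $H:A'\times[0,1]\to X$ between $g|_{A'}$ and $h$. Define $\Phi:\cyl(X',A')\to\cyl(X,A)$ by $(x',0)\mapsto(g(x'),0)$ on $X'\times\{0\}$ and, for $(a',t)\in A'\times[0,1]$, by $(a',t)\mapsto(H(a',2t),0)$ when $t\in[0,1/2]$ and $(a',t)\mapsto(h(a'),2t-1)$ when $t\in[1/2,1]$. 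A direct check shows this is a well-defined continuous map of pairs (consistent on the seam $A'\times\{0\}=X'\times\{0\}\cap A'\times[0,1]$ and sending $A'\times\{1\}$ into $A\times\{1\}$), and an analogous formula with the roles of $f,g,h$ suitably swapped produces a candidate inverse in the other direction.

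The main obstacle I expect is verifying that $\Phi\circ\cyl(f)$ and $\cyl(f)\circ\Phi$ are homotopic to the respective identities through maps of pairs, not merely through arbitrary maps. Each such homotopy must simultaneously interpolate the absolute homotopies $gf\sim\id_X$ and $fg\sim\id_{X'}$ on the bulk with $hf|_A\sim\id_A$ and $f|_A h\sim\id_{A'}$ at the tip, and these are generally incompatible along $A\times[0,1]$ since the given absolute homotopy of $gf$ to $\id_X$ need not restrict to that of $hf|_A$ to $\id_A$. Reconciling them amounts to filling a square $[0,1]^2$ whose boundary is already specified in $\cyl(X,A)$, with the tip edge $\{t=1\}$ constrained to lie in $A\times\{1\}$; this can be carried out by reparameterizing along the cylinder direction and splicing with the auxiliary homotopy $H$, using partition-of-unity arguments on $[0,1]$ (available since $X$ and $A$ are paracompact Hausdorff) to transition smoothly between regimes. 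Once this homotopy of pairs is produced, the relative homotopy invariance of the first step delivers the desired isomorphism $K^0(X',A')\cong K^0(X,A)$.
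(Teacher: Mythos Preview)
Your overall architecture matches the paper's: reduce via Lemma \ref{kzerocyl} to the cylinder pairs, build a candidate inverse $\Phi:\cyl(X',A')\to\cyl(X,A)$ from homotopy inverses $g,h$ and a homotopy $H:g|_{A'}\sim h$, and then argue at the level of pair homotopy. Your first step (homotopic pair maps induce the same map on relative $K^0$) is correct and is also used implicitly by the paper.

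The gap is at exactly the point you flag. You propose to show that $\Phi\circ\cyl(f)$ and $\cyl(f)\circ\Phi$ are pair-homotopic to the identity, and suggest this can be done by ``reparameterizing along the cylinder direction and splicing with $H$.'' In general this fails for the $\Phi$ you wrote down. After straightening, $\Phi\circ\cyl(f)$ is pair-homotopic to a self-map of $\cyl(X,A)$ which is the identity on $X\times\{0\}$ and on $A\times\{1\}$ but carries a residual loop $\gamma$ in $\operatorname{Map}(A,X)$ along $A\times[0,1]$; whether this map is pair-homotopic to the identity is governed by the class of $\gamma$, and for a fixed choice of $g,h,H$ and of the absolute homotopies there is no reason for $\gamma$ to be null. (Already for $X=S^1$, $A=\{p,q\}$ one sees that the obstruction is the difference of the winding numbers picked up at $p$ and at $q$.) One can try to absorb one such obstruction into the choice of $H$, but then the analogous obstruction for $\cyl(f)\circ\Phi$ need not vanish, and you have no independent leverage left.

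The paper sidesteps this entirely. It observes only that $\Phi\circ\cyl(f)$ is pair-homotopic to a self-map of $\cyl(X,A)$ which is the identity on both $X\times\{0\}$ and $A\times\{1\}$; such a map is a pair homotopy \emph{equivalence} (inverse given by the reverse loop), even though it need not be pair-homotopic to the identity. Hence the composition on $K^0(\cyl(-))$ is an isomorphism, so $K^0(\cyl(X',A'))\to K^0(\cyl(X,A))$ is surjective, and by Lemma \ref{kzerocyl} so is $K^0(X',A')\to K^0(X,A)$. The key move is then a bootstrap: the map $\Phi$ itself satisfies the hypotheses of the proposition (its two components are homotopy equivalences), so the surjectivity just proved applies to it, giving surjectivity of $K^0(\cyl(X,A))\to K^0(\cyl(X',A'))$. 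Two surjections whose composite is an isomorphism are isomorphisms, and Lemma \ref{kzerocyl} finishes. This avoids ever having to trivialize the residual loop.
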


\begin{proof}
We begin by showing that the two maps $K^0(X\times[0,1],A\times[0,1])\to K^0(X,A)$ (pullback under $\times\{0\}$ and $\times\{1\}$) coincide (from which it follows that homotopic maps of pairs induce the same map on relative $K^0$).
Since vector bundles on $X\times[0,1]$ are pulled back from $X$ by Proposition \ref{intervalpullback}, this amounts to showing that triples $(E_0,E_1,i),(E_0,E_1,j)\in\Tri(X,A)$ represent the same element of $K^0(X,A)$ if $i$ and $j$ are homotopic.
The construction from the proof of Lemma \ref{relabgrp} shows that the homotopy between $i^{-1}\circ j$ and the identity gives rise to an isomorphism between the pullbacks of these triples to $\cyl(X,A)$, which is enough by Lemma \ref{kzerocyl}.

Since homotopic maps of pairs induce the same map on relative $K^0$, it follows that homotopy equivalences of pairs induce isomorphisms on relative $K^0$.
In light of Lemma \ref{kzerocyl}, it thus suffices to show that a map of pairs $(X,A)\to(X',A')$ whose constituent maps $X\to X'$ and $A\to A'$ are individually homotopy equivalences induces a homotopy equivalence of pairs $\cyl(X,A)\to\cyl(X',A')$.
This is well known: given $(f,g):(X,A)\to(X',A')$, let $p:X'\to X$ and $q:A'\to A$ be homotopy inverses to $f$ and $g$.
To define a map $\cyl(X',A')\to\cyl(X,A)$ which is $p$ on $X'\times\{0\}$ and $q$ on $A'\times\{1\}$, we need a homotopy between the two maps $p|_{A'},q:A'\to X$.
A distinguished homotopy class of such homotopies is furnished by composing further with the homotopy equivalence $f:X\to X'$ and fixing homotopies between $f\circ p$ and $\id_{X'}$ and between $f|_A\circ q=g\circ q$ and $\id_{A'}$.
One then checks that this map $\cyl(X',A')\to\cyl(X,A)$ is a homotopy inverse to the map $\cyl(X,A)\to\cyl(X',A')$.
\end{proof}

\begin{lemma}\label{bundleextension}
Let $X$ be a paracompact orbispace.
Every vector bundle $E$ over a closed substack $Y\subseteq X$ is the restriction of a vector bundle over some open substack $U\subseteq X$ containing $Y$.
\end{lemma}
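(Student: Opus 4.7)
The plan is to extend $E$ to a neighborhood of $Y$ by gluing together trivial local extensions via a transfinite induction over a locally finite cover. First, I apply Proposition \ref{orbispacelocal} together with Lemma \ref{vborbispacelocal} (to $Y$, which is paracompact as a closed substack of $X$) to cover $X$ by open substacks $V_\alpha/G_\alpha$ such that each $E|_{V_\alpha/G_\alpha \cap Y}$ is the pullback of a $G_\alpha$-representation $W_\alpha$ from $\BB G_\alpha$. Setting $\tilde{E}_\alpha := p^*W_\alpha$ gives a vector bundle on $V_\alpha/G_\alpha$ whose restriction to $V_\alpha/G_\alpha \cap Y$ agrees with $E$. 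By paracompactness of $X$, I refine this cover to be locally finite.

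Next I well-order the index set and construct by transfinite induction nested pairs $(U_\gamma, F_\gamma)$, where $U_\gamma$ is an open substack of $X$ containing $Y \cap \bigcup_{\alpha \leq \gamma} V_\alpha/G_\alpha$, and $F_\gamma$ is a vector bundle on $U_\gamma$ with $F_\gamma|_{U_\gamma \cap Y} = E$, strictly compatible in the sense that $F_{\gamma'}|_{U_\gamma} = F_\gamma$ for $\gamma < \gamma'$. Limit stages are handled by unions. For the successor step from $\gamma$ to $\gamma+1$: on the closed substack $Y \cap U_\gamma \cap V_{\gamma+1}/G_{\gamma+1}$ the bundles $F_\gamma$ and $\tilde{E}_{\gamma+1}$ both restrict to $E$, yielding a canonical isomorphism, which I view as a section of $\Hom(F_\gamma, \tilde{E}_{\gamma+1})$ over that closed substack. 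Using Lemma \ref{sectionextension}, I extend this section to all of $U_\gamma \cap V_{\gamma+1}/G_{\gamma+1}$; since being an isomorphism is an open condition, the extension is an isomorphism on an open substack $W \supseteq Y \cap U_\gamma \cap V_{\gamma+1}/G_{\gamma+1}$. I then pick (by normality) an open neighborhood $B_{\gamma+1} \subseteq V_{\gamma+1}/G_{\gamma+1}$ of $Y \cap V_{\gamma+1}/G_{\gamma+1}$ with $B_{\gamma+1} \cap U_\gamma \subseteq W$, set $U_{\gamma+1} := U_\gamma \cup B_{\gamma+1}$, and define $F_{\gamma+1}$ to equal $F_\gamma$ on $U_\gamma$ and $\tilde{E}_{\gamma+1}|_{B_{\gamma+1}}$ on $B_{\gamma+1}$, glued along $B_{\gamma+1} \cap U_\gamma$ via the extended isomorphism. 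After exhausting the index set, $U := \bigcup_\gamma U_\gamma$ is an open neighborhood of $Y$, and the $F_\gamma$ assemble to the desired extension $\tilde{E}$.

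The main obstacle is the applicability of Lemma \ref{sectionextension} at each successor step, since it requires the ambient orbispace to be paracompact, whereas $U_\gamma \cap V_{\gamma+1}/G_{\gamma+1}$ is only an open substack of $X$. I will handle this by showing that open substacks of paracompact Hausdorff orbispaces inherit paracompactness: the coarse space $|X|$ is locally metrizable (arranging the charts $V/G$ so that $V$ is metrizable) and paracompact Hausdorff, hence metrizable by Smirnov's theorem, so open subspaces are metrizable and thus paracompact. The same metrizability also validates the normality argument used to separate the closed substack $Y \cap V_{\gamma+1}/G_{\gamma+1}$ from the relatively closed set $(U_\gamma \cap V_{\gamma+1}/G_{\gamma+1}) \setminus W$ in the choice of $B_{\gamma+1}$.
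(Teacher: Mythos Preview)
Your overall strategy—locally trivializing and then patching by transfinite induction—is the same as the paper's, but the execution has two genuine gaps.

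\textbf{Metrizability is not available.} You claim that the charts $V/G$ can be arranged with $V$ metrizable, so that $\left|X\right|$ is locally metrizable and hence (by Smirnov) metrizable. This fails already for orbispaces with trivial isotropy: any paracompact Hausdorff space is a paracompact orbispace, and there are compact Hausdorff spaces (e.g.\ $[0,1]^{\RR}$ or $\beta\mathbb N$) which are nowhere locally metrizable. Proposition~\ref{orbispacelocal} only gives Hausdorff $V$, and there is no mechanism to upgrade this. Consequently your appeal to Lemma~\ref{sectionextension} on the \emph{open} substack $U_\gamma\cap V_{\gamma+1}/G_{\gamma+1}$ is unjustified: open subsets of paracompact Hausdorff spaces need not be paracompact.

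\textbf{The separation step can fail even in metric spaces.} Suppose everything is metrizable and you have produced the open set $W\subseteq U_\gamma\cap V_{\gamma+1}/G_{\gamma+1}$ on which the Tietze extension is an isomorphism. You then ask for an open $B_{\gamma+1}\subseteq V_{\gamma+1}/G_{\gamma+1}$ containing $Y\cap V_{\gamma+1}/G_{\gamma+1}$ with $B_{\gamma+1}\cap U_\gamma\subseteq W$. But $(U_\gamma\cap V_{\gamma+1}/G_{\gamma+1})\setminus W$ is only closed \emph{relative to the open set} $U_\gamma\cap V_{\gamma+1}/G_{\gamma+1}$, not in $V_{\gamma+1}/G_{\gamma+1}$; normality does not let you separate it from $Y\cap V_{\gamma+1}/G_{\gamma+1}$. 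Concretely: take $X=\RR^2$, $Y=\{0\}\times\RR$, $V_1$ the upper half-plane, $V_2=\{y<1\}$, and $U_1=B_1=(-1,1)\times(0,\infty)$. On $U_1\cap V_2=(-1,1)\times(0,1)$ the Tietze extension of the identity from $\{0\}\times(0,1)$ could be $\phi(x,y)=(y^2-x^2)/(x^2+y^2)$, which vanishes along $|x|=y$. Then every open neighborhood of $(0,0)\in Y\cap V_2$ meets $U_1$ in points where $\phi=0$, so no admissible $B_2$ exists. The trouble is that $(0,0)$ lies in $\overline{U_\gamma}\setminus U_\gamma$, and Lemma~\ref{sectionextension} gives you no control there.

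The paper avoids both problems by working with a \emph{closed} shrinking $K_i\subseteq Z_i/G_i$ of the cover and extending the transition functions $\alpha_{ij}$ over the closed (hence paracompact) overlaps $K_i\cap K_j$, enlarging $Y$ rather than building an open $U_\gamma$. This keeps every application of Lemma~\ref{sectionextension} on a closed substack of $X$ and replaces your separation step with a straightforward use of normality on the closed set $K_i\cap K_j$. Reworking your induction to carry closed sets (and to grow $Y$ rather than $U$) would repair both gaps.
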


\begin{proof}
Fix a locally finite open cover $X=\bigcup_iZ_i/G_i$ such that the pullback of $E$ to each $(Z_i)_Y:=Z_i\times_XY\subseteq Z_i$ is trivialized $G_i$-equivariantly (Lemma \ref{vborbispacelocal}).
Such trivializations evidently extend $E|_{Y\cap(Z_i/G_i)}$ to $Z_i/G_i$.
We patch together these extensions $E_i$ on $Z_i/G_i$ as follows.

Choose closed substacks $K_i\subseteq X$ with $K_i\subseteq Z_i/G_i$ and whose interiors cover $X$.
We thus have a collection of transition functions $\alpha_{ij}:K_i\cap K_j\cap Y\to\Hom(E_i,E_j)$ satisfying $\alpha_{ii}=\id$ and $\alpha_{ij}\alpha_{jk}\alpha_{ki}=\id$ over $K_i\cap K_j\cap K_k\cap Y$.
We now execute the following operation for every pair $(i,j)$ inductively according to an arbitrary well-ordering of such pairs.
For $i=j$, do nothing.
For $i\ne j$, choose an extension of $\alpha_{ij}$ from $K_i\cap K_j\cap Y$ to $K_i\cap K_j$ using Lemma \ref{sectionextension}.
This extension remains an isomorphism in a neighborhood of $K_i\cap K_j\cap Y$.
Since (the coarse space of) $K_i\cap K_j$ is paracompact Hausdorff, hence normal, we may choose a closed substack $A\subseteq K_i\cap K_j$ whose interior contains $K_i\cap K_j\cap Y$ and over which the extension of $\alpha_{ij}$ is an isomorphism.
Now this new $\alpha_{ij}$ over $A$ gives unique extensions of the remaining $\alpha_{k\ell}$ to $K_k\cap K_\ell\cap(Y\cup A)$ such that the cocycle condition is satisfied.
We now replace $Y$ with $Y\cup A$ and go on to the next pair of indices.
Note that since our cover $X=\bigcup_iZ_i/G_i$ is locally finite, the set $Y$ remains closed even after possibly infinitely many steps.

After processing every pair $(i,j)$, our extended transition functions define a vector bundle on a closed substack $\bar Y\subseteq X$ whose restriction to $Y\subseteq\bar Y$ is $E$, and by construction $Y$ is contained in the interior of $\bar Y$.
\end{proof}

We now come to the exactness and excision properties of $K$-theory, where we will finally make use of Corollary \ref{enoughvectorbundles}.

For the proof of exactness, we will make use of the following notion.
Let us call a map of abelian monoids $f:M\to N$ \emph{cofinal} iff for every $n\in N$ there exist $m\in M$ and $n'\in N$ with $f(m)=n+n'$.
The quotient $M/M'$ is an abelian group iff $M'\to M$ is cofinal.
The conclusion of Corollary \ref{enoughvectorbundles} for a map $Y\to X$ is that the pullback map $\Vect(X)\to\Vect(Y)$ is cofinal (recalling from Corollary \ref{paracompactsplit} that every inclusion of vector bundles over a paracompact orbispace is split).

\begin{proposition}[Exactness]\label{lesktheory}
Let $X$ be an orbispace satisfying the hypothesis of Theorem \ref{maingeneral}, let $A\subseteq X\supseteq Y$ be closed substacks, let $B:=A\cap Y$ be their intersection, and let $A\cup_BY\subseteq X$ be their union.
The following sequence is exact:
\begin{equation*}
K^0(X,A\cup_BY)\to K^0(X,A)\to K^0(Y,B).
\end{equation*}
\end{proposition}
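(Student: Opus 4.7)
The plan is to verify the two containments for exactness at $K^0(X,A)$ separately. For the containment image~$\subseteq$~kernel, a triple $(E_0,E_1,j)$ on $(X,A\cup_B Y)$ pulls back along $Y\hookrightarrow X$ to the triple $(E_0|_Y,E_1|_Y,j|_B)$ on $(Y,B)$; but $j|_Y$ is a genuine isomorphism on all of $Y$ extending $j|_B$, so the pair of isomorphisms $(\id_{E_0|_Y},(j|_Y)^{-1})$ exhibits $(E_0|_Y,E_1|_Y,j|_B)\cong (E_0|_Y,E_0|_Y,\id)$, which represents zero in $K^0(Y,B)$. This direction requires no hypothesis on $X$ beyond formal properties of triples.

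For the kernel~$\subseteq$~image direction, suppose $(E_0,E_1,i)$ on $(X,A)$ pulls back to zero in $K^0(Y,B)$. Unraveling the definition of the monoid quotient by identity triples, this means there exist a bounded-rank vector bundle $F$ on $Y$ and an isomorphism $\phi:E_0|_Y\oplus F\xrightarrow{\sim}E_1|_Y\oplus F$ over $Y$ whose restriction to $B$ equals $i|_B\oplus \id_{F|_B}$. Since $(E_0,E_1,i)$ is equal in $K^0(X,A)$ to $(E_0\oplus G,E_1\oplus G,i\oplus\id_{G|_A})$ for any vector bundle $G$ on $X$, we are free to stabilize by any bundle pulled back from $X$, and our task is to find a stabilization after which the isomorphism defined on $A$ extends to one defined on all of $A\cup_B Y$.

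The main step, which is where Corollary~\ref{cofinalvectorbundles} enters, is to extend $F$ up to a summand from $Y$ to $X$: cofinality of $\Vect(X)\to\Vect(Y)$ produces a vector bundle $\tilde F$ on $X$ together with a bundle $F'$ on $Y$ and an isomorphism $F\oplus F'\cong \tilde F|_Y$. Transporting $\phi\oplus \id_{F'}$ across this isomorphism gives an isomorphism $\tilde\phi:E_0|_Y\oplus \tilde F|_Y\xrightarrow{\sim}E_1|_Y\oplus \tilde F|_Y$ whose restriction to $B$ is $i|_B\oplus \id_{\tilde F|_B}$. On the stabilized triple $(E_0\oplus \tilde F,E_1\oplus \tilde F,i\oplus \id_{\tilde F|_A})$ over $(X,A)$, the trivializing isomorphism $i\oplus \id_{\tilde F|_A}$ on $A$ and the isomorphism $\tilde\phi$ on $Y$ now agree on $B$, so they glue to a single isomorphism over the closed substack $A\cup_B Y$. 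This exhibits our class as the image of a class in $K^0(X,A\cup_B Y)$.

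The main obstacle is the extension step: without Corollary~\ref{cofinalvectorbundles}, the trivializing bundle $F$ on $Y$ has no reason to be the restriction of anything on $X$, and the isomorphism over $A$ cannot be combined with the isomorphism over $Y$ into a single isomorphism over $A\cup_B Y$. Every other ingredient is formal, depending only on the monoid-theoretic definition of relative $K$-theory and on the compatibility of restriction with direct sum.
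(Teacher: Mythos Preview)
Your argument is correct and follows essentially the same approach as the paper's proof: both use cofinality of $\Vect(X)\to\Vect(Y)$ (Corollary~\ref{cofinalvectorbundles}) to arrange that the stabilizing bundle comes from $X$, and then glue isomorphisms over $A$ and $Y$ agreeing on $B$ into one over $A\cup_B Y$. The only difference is packaging: the paper rewrites each relative $K^0$ as a direct limit $\varinjlim_{E\in\widehat\Vect(X)}\Vect(X,E|_{-})$, invokes cofinality to replace $\widehat\Vect(Y)$ by $\widehat\Vect(X)$ in the third term, and then checks the termwise statement ``fiber over $E$ equals image,'' whereas you unwind the same content directly at the level of triples with explicit stabilizations.
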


\begin{proof}
The following sequence is exact
\begin{equation*}
\Tri(X,A\cup_BY)\to\Tri(X,A)\to(\Tri(Y,B),\Vect(Y)),
\end{equation*}
in the sense that the image of the first map coincides with the inverse image of $\Vect(Y)$ under the second map.
Indeed, given a triple $(E_0,E_1,i)\in\Tri(X,A)$ for which $i|_B$ extends to $Y$, we may glue this extension to $i$ over $B$ to lift $(E_0,E_1,i)$ from $\Tri(X,A)$ to $\Tri(X,A\cup_BY)$.
Quotienting this sequence by $\Vect(X)$, we conclude that
\begin{equation*}
K^0(X,A\cup_BY)\to K^0(X,A)\to(\Tri(Y,B)/\Vect(X),\Vect(Y)/\Vect(X))
\end{equation*}
is exact.
Now $\Vect(X)\to\Vect(Y)$ is cofinal by Corollary \ref{enoughvectorbundles}, so $\Vect(Y)/\Vect(X)$ is an abelian group, so the kernel of the map from $\Tri(Y,B)/\Vect(X)$ to its quotient by $\Vect(Y)/\Vect(X)$ is precisely $\Vect(Y)/\Vect(X)$.
This quotient is simply $\Tri(Y,B)/\Vect(Y)=K^0(Y,B)$, so we are done.
\end{proof}

For the proof of excision, we will make use of the following alternative description of relative $K$-theory in terms of direct limits.
Let $\widehat\Vect(X)$ denote the category whose objects are vector bundles on $X$ of bounded rank and whose morphisms are homotopy classes of injective maps.
The category $\widehat\Vect(X)$ is \emph{filtered}, meaning that (1) it is non-empty, (2) for every pair of objects $x,y$, there exist morphisms $x\to z\leftarrow y$, and (3) for every pair of morphisms $x\righttwoarrows y$ there exists a morphism $y\to z$ such that the two compositions $x\to z$ coincide.
Indeed, (1) we have the zero vector bundle, (2) given vector bundles $E$ and $F$, they both admit a morphism to $E\oplus F$, and (3) given two injections $E\hookrightarrow F$, they become homotopic after composing with the inclusion $F\hookrightarrow F\oplus E$.

For any map of stacks $f:Y\to X$ and any vector bundle $F$ on $Y$, let $\Vect(X,F)$ denote the set of isomorphism classes of pairs $(V,i)$ consisting of a vector bundle $V$ on $X$ of bounded rank and an isomorphism $i:f^*V\xrightarrow\sim F$.
If $X$ is a paracompact orbispace, $\Vect(X,f^*E)$ forms a directed system over $E\in\widehat\Vect(X)$.
Indeed, given an inclusion $E\hookrightarrow E'$, there is a natural map $\Vect(X,f^*E)\xrightarrow{{}\oplus(E'/E)}\Vect(X,f^*E')$ since the inclusion $E\hookrightarrow E'$ has a(n automatically unique up to homotopy) splitting by Corollary \ref{paracompactsplit}.
For such $X$, we claim that $K^0(X,Y)$ may be expressed as the direct limit
\begin{equation*}
K^0(X,Y)=\varinjlim_{E\in\widehat\Vect(X)}\Vect(X,f^*E).
\end{equation*}
Elements of the direct limit above are, by definition, equivalence classes of triples $(E,V,i)\in\Tri(X,Y)$.
Two triples $(E,V,i)$ and $(E',V',i')$ are equivalent in the above direct limit iff they become isomorphic after pushing both to a common $E\hookrightarrow E''\hookleftarrow E'$, which is the same as saying $(E,V,i)$ and $(E',V',i')$ are isomorphic after adding elements of $\Vect(X)$.

Recall that a functor $F$ between filtered categories is called \emph{cofinal} iff (1) for every $x$ in the target there exists a morphism $x\to F(y)$, and (2) for every two morphisms $x\righttwoarrows F(y)$, there exists a morphism $y\to z$ such that the compositions $x\to F(z)$ coincide.
The significance of cofinality is that pulling back under a cofinal functor induces an isomorphism on direct limits:
\begin{equation*}
\varinjlim_{x\in\C}A(F(x))\xrightarrow\sim\varinjlim_{x\in\D}A(x)
\end{equation*}
is an isomorphism for any cofinal functor $F:\C\to\D$ between filtered categories $\C$ and $\D$ and any directed system of sets $A$ over $\D$.

The conclusion of Corollary \ref{enoughvectorbundles} for $f:Y\to X$ is equivalent to the assertion that the pullback functor $f^*:\widehat\Vect(X)\to\widehat\Vect(Y)$ is cofinal.
Indeed, $f^*$ always satisfies condition (2) since any two inclusions $E\hookrightarrow f^*F$ become homotopic upon postcomposing with $f^*(F\hookrightarrow F\oplus F)$, and condition (1) is precisely the conclusion of Corollary \ref{enoughvectorbundles}.

\begin{proposition}[Excision]\label{excision}
Let $X'\to X$ be a representable map of orbispaces satisfying the hypothesis of Theorem \ref{maingeneral}.
Let $Y\to X$ be arbitrary, and let $Y'\to X'$ denote its pullback along $X'\to X$.
If there is an open cover $X=U\cup V$ such that $Y\to X$ is an isomorphism over $U$ and $X'\to X$ is an isomorphism over $V$, then the natural map $K^0(X,Y)\to K^0(X',Y')$ is an isomorphism.
\end{proposition}

\begin{proof}
We may write the map in question in terms of direct limits as
\begin{equation*}
\varinjlim_{E\in\widehat\Vect(X)}\Vect(X,f^*E)\to\varinjlim_{E'\in\widehat\Vect(X')}\Vect(X',(f')^*E').
\end{equation*}
By Corollary \ref{enoughvectorbundles}, we may replace the second direct limit with the corresponding direct limit over $\widehat\Vect(X)$.
It thus suffices to show that the pullback map $\Vect(X,f^*E)\to\Vect(X',(f')^*E')$ is an isomorphism for every vector bundle $E$ on $X$ of bounded rank with $E':=E\times_XX'$.
Since $Y\to X$ is an isomorphism over $U$ and $X=U\cup V$, this coincides with the map $\Vect(V,f^*E|_V)\to\Vect(V\times_XX',(f')^*E|_V)$, which is a bijection since $X'\to X$ is an isomorphism over $V$.
\end{proof}

Given the significance of the hypothesis of Theorem \ref{maingeneral}, it is essential to show that this property is preserved under various natural operations.

\begin{lemma}\label{productcoarse}
For any topological stack $X$ and any locally compact Hausdorff space $R$, the natural map $\left|X\times R\right|\to\left|X\right|\times R$ is an isomorphism.
\end{lemma}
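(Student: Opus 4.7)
The plan is to exhibit both $|X\times R|$ and $|X|\times R$ as quotients of $U\times R$, where $U\to X$ is an atlas, under the same equivalence relation and with the same quotient topology; local compactness of $R$ enters only at the last step.

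First I would fix an atlas $U\to X$ and observe that $U\times R\to X\times R$ is then representable and admits local sections, hence is an atlas. I would next recall (or re-derive using the ``local section'' trick already used in the excerpt for $V/G\to\left|V/G\right|$) the general fact that for any topological stack $Z$ with atlas $W\to Z$, the induced map $W\to\left|Z\right|$ is a quotient map: continuity is clear from the definition of the coarse space, and conversely if $V\subseteq\left|Z\right|$ has open preimage in $W$, then for any map $Y\to Z$ from a space, the pullback $W\times_ZY\to Y$ admits local sections, and composing any such local section with $W\times_ZY\to W$ shows that the preimage of $V$ in $Y$ is locally open, hence open. So $\left|X\right|$ and $\left|X\times R\right|$ are quotient spaces of $U$ and $U\times R$ respectively.

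Second, I would identify the underlying sets. The equivalence relation on $U\times R$ defining $\left|X\times R\right|$ is the image of $(U\times R)\times_{X\times R}(U\times R)=(U\times_XU)\times R$ in $(U\times R)\times(U\times R)$, which is exactly the product of the equivalence relation $\sim_X$ on $U$ (defining $\left|X\right|$) with equality on $R$. Hence as a set the quotient is $\left|X\right|\times R$, and the natural map $\left|X\times R\right|\to\left|X\right|\times R$ is the identity on underlying sets.

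It remains to compare topologies. The topology on $\left|X\times R\right|$ is the quotient topology induced from $U\times R$. The product topology on $\left|X\right|\times R$ is obtained from $U\times R\to\left|X\right|\times R$ by the map $q\times\id_R$, where $q\colon U\to\left|X\right|$ is the quotient map of the first step. The main (classical) ingredient is then that the product of a quotient map with the identity on a locally compact Hausdorff space is again a quotient map; this is where the hypothesis on $R$ is used. Applying this, the product topology on $\left|X\right|\times R$ coincides with the quotient topology from $U\times R$, which is the topology on $\left|X\times R\right|$.

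The one substantive point is the classical quotient-by-locally-compact-Hausdorff lemma; the rest of the argument is formal manipulation of atlases, pullbacks, and equivalence relations. A minor care-point is verifying that the local-section argument really shows $U\to\left|X\right|$ is a quotient map (and not merely continuous surjective), but this is essentially already contained in the identification $\left|X\right|=U/{\sim_X}$ established earlier in the excerpt.
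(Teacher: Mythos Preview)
Your proposal is correct and follows essentially the same route as the paper: pick an atlas $U\to X$, use that $U\to\left|X\right|$ and $U\times R\to\left|X\times R\right|$ are quotient maps, invoke Whitehead's lemma that quotient maps are preserved under product with a locally compact Hausdorff space, and conclude since the natural map is a bijection. The paper's proof is terser (it asserts the quotient-map property and the bijectivity without spelling out the local-section argument or the equivalence-relation computation), but the content is the same.
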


\begin{proof}
For any atlas $U\to X$, the induced map $U\to\left|X\right|$ is a topological quotient map, meaning it is surjective and a subset of the target is open iff its inverse image in the source is open.
Topological quotient maps are preserved by taking product with a locally compact Hausdorff space \cite[Lemma 4]{whiteheadquotient}, so $U\times R\to\left|X\right|\times R$ is also a topological quotient map.
Since $U\times R\to X\times R$ is also an atlas, the map $U\times R\to\left|X\times R\right|$ is also a topological quotient map.
Now $\left|X\times R\right|\to\left|X\right|\times R$ is a bijection, so we are done.
\end{proof}

\begin{lemma}\label{productfd}
If a topological space $X$ is coarsely finite-dimensional, then so is $X\times R$ for any closed subset $R\subseteq\RR^n$.
\end{lemma}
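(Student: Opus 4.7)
The plan is to use a compact exhaustion of $R$ to reduce the problem to compact slabs, where the product of $X$ with a compact metric space of dimension $\le n$ can be handled directly via coarse finite-dimensionality of $X$ together with the standard fact $\dim R\le n$ (valid for any subset of $\RR^n$). I would fix a compact exhaustion $R=\bigcup_{k\ge 1}R_k$ with $R_k\subseteq\operatorname{int}(R_{k+1})$ (taking for instance $R_k:=R\cap\bar B(0,k)$), the associated compact annuli $K_k:=R_k\setminus\operatorname{int}(R_{k-1})$ (with $\bigcup_k K_k=R$), and the open slabs $T_k:=\operatorname{int}(R_{k+1})\setminus R_{k-2}\supseteq K_k$; the $T_k$'s form a locally finite open cover of $R$ in which every point lies in only a bounded number of slabs.

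Given an open cover $\mathcal{W}$ of $X\times R$, I would first refine it to a cover by basic product boxes $U\times V$ with $V$ relatively compact in $R$ (possible by local compactness of $R$). For each $(x,k)$, compactness of $\{x\}\times K_k$ gives a finite subcover by such boxes $\{U^{x,k}_j\times V^{x,k}_j\}_j$ chosen so that $V^{x,k}_j\subseteq T_k$; the finite intersection $O^{x,k}:=\bigcap_j U^{x,k}_j$ is then an open neighborhood of $x$ for which $O^{x,k}\times K_k$ is covered by these boxes and therefore refines $\mathcal{W}$. For each $k$ the family $\{O^{x,k}\}_{x\in X}$ is an open cover of $X$, so by coarse finite-dimensionality of $X$ it admits a locally finite refinement $\{A^k_\alpha\}_\alpha$ with finite-dimensional nerve. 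Choosing a witness $x_{k,\alpha}$ with $A^k_\alpha\subseteq O^{x_{k,\alpha},k}$ for each $\alpha$ and refining the finite $V$-family to multiplicity $\le n+1$ on $K_k$ (using $\dim R\le n$), one obtains $\{A^k_\alpha\times\tilde V^k_{\alpha,\ell}\}_{\alpha,\ell}$ which covers $X\times K_k$, refines $\mathcal{W}$, and has uniform multiplicity on that slab. Taking the union over $k$ gives a locally finite open refinement of $\mathcal{W}$ on all of $X\times R$, local finiteness following from local finiteness of the slabs $T_k$ and of each $\{A^k_\alpha\}_\alpha$.

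The main obstacle is ensuring that the multiplicity of the final refinement is uniformly bounded on $X\times R$ (and not merely finite pointwise), so that the resulting nerve is genuinely finite-dimensional: a priori the dimensions $d_k$ of the $X$-refinements for successive slabs could grow with $k$. To handle this I would apply coarse finite-dimensionality of $X$ once to the single combined cover $\{O^{x,k}\}_{x\in X,\,k\ge 1}$ of $X$, obtaining one locally finite refinement $\{A_\alpha\}$ of uniform nerve dimension $d$, and reuse this refinement across all slabs after a bookkeeping step assigning compatible $(x_\alpha,k_\alpha)$-labels so that the product family $\{A_\alpha\times\tilde V^{k_\alpha}_{\alpha,\ell}\}$ still covers $X\times R$. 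The resulting refinement has multiplicity bounded by a fixed multiple of $(d+1)(n+1)$, yielding a finite-dimensional nerve; arranging this labelling is the delicate step on which the argument turns.
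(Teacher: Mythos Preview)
Your strategy is in the same spirit as the paper's but organized differently. The paper first treats the compact case $R=[0,1]$: compactness of $[0,1]$ yields for each $x\in X$ a single pair $(V_x,\varepsilon_x)$ with $V_x\times((t-\varepsilon_x,t+\varepsilon_x)\cap[0,1])$ contained in some member of the given cover for every $t$; one then applies coarse finite-dimensionality of $X$ \emph{once} to the cover $\{V_x\}$ and takes the products $V_\alpha\times(\varepsilon_\alpha(k,k+2)\cap[0,1])$. After that the paper passes to $\RR$ via overlapping slabs $[n,n+2]$, inducts to $\RR^n$, and finally restricts to closed $R\subseteq\RR^n$. You instead attack general $R$ directly with a compact exhaustion and invoke $\dim R\le n$ for the $R$-direction, which is a reasonable alternative to the paper's induction on $n$.

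Where your argument breaks is precisely at the fix you propose. Refining the combined cover $\{O^{x,k}\}_{x,k}$ of $X$ once to $\{A_\alpha\}$ associates to each $A_\alpha$ a single slab index $k_\alpha$, but for a fixed $k$ there is no reason the subfamily $\{A_\alpha:k_\alpha=k\}$ should still cover $X$---and that is exactly what your product family $\{A_\alpha\times\tilde V^{k_\alpha}_{\alpha,\ell}\}$ needs in order to cover $X\times K_k$. You cannot relabel after the fact either: a given $A_\alpha$ need not be contained in any $O^{x,k}$ for $k\ne k_\alpha$, because those neighbourhoods were built from compactness of the specific slab $K_k$ and may shrink as $k$ varies. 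So the ``bookkeeping step'' is not bookkeeping; it is the whole obstruction. The mechanism that makes the paper's compact step work is that on a \emph{compact} $R$ one neighbourhood of $x$ controls the entire fibre $\{x\}\times R$, so a single refinement of $X$ serves across all of $R$ simultaneously. To repair your argument you should isolate and prove the compact case first (as the paper does) and only then glue slabs, rather than trying to merge the per-slab $X$-covers into one refinement.
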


\begin{proof}
We begin with the case $R=[0,1]$.
Let a cover $X\times[0,1]=\bigcup_\alpha U_\alpha$ be given.
For every $x\in X$ and $t\in[0,1]$, there exists a pair of open neighborhoods $x\in V\subseteq X$ and $t\in W\subseteq[0,1]$ such that $V\times W$ is contained in some $U_\alpha$.
Fixing $x\in X$ and using compactness of $[0,1]$, we see that there are finitely many such $V_i\times W_i$ covering $\{x\}\times[0,1]$.
Hence there exists an open neighborhood $x\in V\subseteq X$ and an $\varepsilon>0$ such that $V\times((t-\varepsilon,t+\varepsilon)\cap[0,1])$ is contained in some $U_\alpha$ for every $t\in[0,1]$.
We consider the collection of all such pairs $(V,\varepsilon)$.
Since $X$ is coarsely finite-dimensional, there is a collection of such pairs $(V_\alpha,\varepsilon_\alpha)$ such that the nerve of the covering $X=\bigcup_\alpha V_\alpha$ is finite-dimensional.
Now cover $X\times[0,1]$ by $V_\alpha\times([0,1]\cap\varepsilon_\alpha\cdot(k,k+2))$ for integers $k$.

Next, we consider the case $R=\RR$.
Let a cover of $X\times\RR$ be given.
The intersection of this cover with $X\times[n,n+2]$ has a refinement with finite-dimensional nerve by the case $R=[0,1]$.
Consider this refinement intersected with $X\times(n,n+2)$, and take union over all integers $n$ to obtain a refinement of the original cover of $X\times\RR$ with finite-dimensional nerve.

Finally, by induction we obtain the case $R=\RR^n$, and the general case follows since coarse finite-dimensionality passes to closed subsets.
\end{proof}

\begin{corollary}\label{hypstable}
If $X$ satisfies the hypothesis of Theorem \ref{maingeneral}, then so does $X\times R$ for any closed subset $R\subseteq\RR^n$.
\end{corollary}

Let us now recall the Puppe sequence, which produces from Propositions \ref{lesktheory} and \ref{excision} a long exact sequence.
Let $(X,A)$ be any pair of orbispaces satisfying the hypothesis of Theorem \ref{maingeneral}.
Let $I:=[0,1]$.
We now have maps of pairs
\begin{align*}
&(A,\varnothing)\to(X,\varnothing)\to(X,A)\\
&(A\times I,A\times\partial I)\to(X\times I,X\times\partial I)\to(X\times I,(X\times\partial I)\cup(A\times I))\\
&(A\times I^2,A\times\partial I^2)\to(X\times I^2,X\times\partial I^2)\to(X\times I^2,(X\times\partial I^2)\cup(A\times I^2))\\
&\qquad\vdots
\end{align*}
(note that these pairs satisfy the hypothesis of Theorem \ref{maingeneral} by Corollary \ref{hypstable}).
We also have maps $(X\times I^k,(X\times\partial I^k)\cup(A\times I^k))\to(A\times I^{k+1},A\times\partial I^{k+1})$ up to inverting maps inducing isomorphisms on $K^0$.
Namely, these `connecting maps' are given by
\begin{align*}
&(X\times I^k\times I,(X\times\partial I^k\times I)\cup(A\times I^k\times\{1\}))\\
&\qquad\qquad\downarrow\\
&(X\times I^k\times I,(X\times\partial I^k\times I)\cup(A\times I^k\times\{1\})\cup(X\times I^k\times\{0\}))
\end{align*}
where we note that the domain admits a natural map to $(X\times I^k,(X\times\partial I^k)\cup(A\times I^k))$ (which induces an isomorphism on $K^0$ by Proposition \ref{relhtpyinv}) and the target a natural map from $(A\times I^{k+1},A\times\partial I^{k+1})$.
To see that this second map also induces an isomorphism on $K^0$, factor it as
\begin{align*}
(A\times I^k\times I&,(A\times\partial I^k\times I)\cup(A\times I^k\times\{1\})\cup(A\times I^k\times\{0\}))\\
&\qquad\downarrow\\
((X\times I^k\times\{0\})\cup(A\times I^k\times I)&,(A\times\partial I^k\times I)\cup(A\times I^k\times\{1\})\cup(X\times I^k\times\{0\}))\\
&\qquad\downarrow\\
((X\times I^k\times I)&,(X\times\partial I^k\times I)\cup(A\times I^k\times\{1\})\cup(X\times I^k\times\{0\}))
\end{align*}
and observe that the first map is an isomorphism by excision Proposition \ref{excision} (and some manipulation involving adding $A\times I^k\times[0,\frac 12]$ to the second space of both pairs, which does not change their homotopy type but makes it possibly to apply Proposition \ref{excision})
and the second map is an isomorphism by Proposition \ref{relhtpyinv} (a homotopy inverse to $(X\times I^k\times\{0\})\cup(A\times I^k\times I)\to X\times I^k\times I$ is given by projection to $X\times I^k\times\{0\}$, and the second terms are both homotopy equivalent to $(X\times D^k)\cup(A\times S^k)$).

\begin{definition}
For any pair $(X,A)$, we define\footnote{To keep track of signs, one should really write this as $K^{-n}(X)\otimes H^n(I^n,\partial I^n)=K^0(X\times I^n,X\times\partial I^n)$ etc., however for the purposes of our presentation here, we will not be so precise.}
\begin{align*}
K^{-n}(X):&=K^0(X\times I^n,X\times\partial I^n),\\
K^{-n}(X,A):&=K^0(X\times I^n,(X\times\partial I^n)\cup(A\times I^n)).
\end{align*}
\end{definition}

Given this definition, we can write the Puppe sequence above in a more familiar form.
Namely, for $X$ an orbispace satisfying the hypothesis of Theorem \ref{maingeneral}, we have a sequence of the form
\begin{equation*}
\cdots\to K^{-2}(A)\to K^{-1}(X,A)\to K^{-1}(X)\to K^{-1}(A)\to K^0(X,A)\to K^0(X)\to K^0(A)
\end{equation*}
functorial in the pair $(X,A)$.

\begin{proposition}[Long Exact Sequence]
The sequence above is exact.
\end{proposition}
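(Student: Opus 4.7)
The plan is to deduce exactness at each interior position of the Puppe sequence from Proposition \ref{lesktheory}, applied (separately for each type of position) to an appropriate configuration $(Z;\alpha,\beta)$ of closed substacks inside a space of the form $X\times I^n$.  Since the hypothesis of Theorem \ref{maingeneral} is preserved under taking products with closed subsets of $\RR^n$ and under passage to closed substacks (as noted above), Proposition \ref{lesktheory} does apply in each case and yields
\begin{equation*}
K^0(Z,\alpha\cup\beta)\to K^0(Z,\alpha)\to K^0(\beta,\alpha\cap\beta)
\end{equation*}
exact at the middle.

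The Puppe sequence has three cyclically-arranged types of interior position.  Writing $D_k:=(X\times I^k\times I,(X\times\partial I^k\times I)\cup(A\times I^k\times\{1\}))$ and $T_k$ for the pair obtained by enlarging the second subspace of $D_k$ by $X\times I^k\times\{0\}$ (so that $K^0(D_k)\cong K^{-k}(X,A)$ and $K^0(T_k)\cong K^{-k-1}(A)$ by the identifications already established in the construction of the connecting maps), the three configurations are as follows.  For exactness at $K^{-k}(X)$, take $Z=X\times I^k$ with $\alpha=X\times\partial I^k$ and $\beta=A\times I^k$; this yields exactness of $K^{-k}(X,A)\to K^{-k}(X)\to K^{-k}(A)$.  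For exactness at $K^{-k}(X,A)$, take $Z=X\times I^{k+1}$ with $\alpha$ the second subspace of $D_k$ and $\beta=X\times I^k\times\{0\}$; then $\alpha\cup\beta$ is the second subspace of $T_k$ and $\alpha\cap\beta=X\times\partial I^k\times\{0\}$, yielding exactness of $K^0(T_k)\to K^0(D_k)\to K^{-k}(X)$, identified with $K^{-k-1}(A)\to K^{-k}(X,A)\to K^{-k}(X)$.  For exactness at $K^{-k}(A)$ (for $k\geq 1$; no statement is required at $K^0(A)$ since it is the rightmost term), take $Z=X\times I^k$ with $\alpha$ the second subspace of $T_{k-1}$ and $\beta=X\times I^{k-1}\times\{1\}$; then $\alpha\cup\beta=X\times\partial I^k$ and the pair $(\beta,\alpha\cap\beta)$ is the pair defining $K^{-(k-1)}(X,A)$, yielding exactness of $K^{-k}(X)\to K^{-k}(A)\to K^{-(k-1)}(X,A)$.

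It remains in each case to verify that the maps produced by Proposition \ref{lesktheory} agree with the Puppe-sequence maps under the established identifications.  Direct (inclusion-induced) maps agree immediately, and connecting maps agree because both are realized as pullback along the same inclusion of pairs $D_{k'}\hookrightarrow T_{k'}$ used to define the Puppe connecting map.  The main obstacle is merely this bookkeeping of identifications; once untangled, exactness at each interior position follows at once from Proposition \ref{lesktheory}.
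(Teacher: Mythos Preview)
Your proof is correct and follows the same strategy as the paper: deduce exactness at each of the three types of position in the Puppe sequence by a single application of Proposition~\ref{lesktheory} to a suitable triple of closed substacks of $X\times I^n$, then check that the resulting maps match those in the Puppe sequence under the identifications already set up.

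Your first two configurations coincide with the paper's.  For exactness at $K^{-k}(A)$ you make a cleaner choice: you take $\beta=X\times I^{k-1}\times\{1\}$, the endpoint slice, whereas the paper takes the thickened slice $X\times I^{k-1}\times[\tfrac12,1]$ and then has to argue separately (via Proposition~\ref{relhtpyinv}) that the resulting outer pair is homotopy equivalent to the one defining $K^{-k}(X)$.  Your choice gives $\alpha\cup\beta=X\times\partial I^k$ on the nose, so that homotopy step disappears.  One small point worth making explicit in your write-up: in this third case the map $K^0(Z,\alpha)\to K^0(\beta,\alpha\cap\beta)$ produced by Proposition~\ref{lesktheory} is restriction to the slice $\{1\}$, not literally the inclusion $D_{k-1}\hookrightarrow T_{k-1}$; it agrees with the Puppe connecting map because the inclusion $(X\times I^{k-1}\times\{1\},\,((X\times\partial I^{k-1})\cup(A\times I^{k-1}))\times\{1\})\hookrightarrow D_{k-1}$ is a section of the projection realizing $K^0(D_{k-1})\cong K^{-(k-1)}(X,A)$.
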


\begin{proof}
Each of the following three triples satisfies the hypotheses of Proposition \ref{lesktheory}:
\begin{align*}
&(A\times I^k,A\times\partial I^k)\to(X\times I^k,X\times\partial I^k)\to(X\times I^k,(X\times\partial I^k)\cup(A\times I^k))\\
&(X\times I^k,X\times\partial I^k)\xrightarrow{\times\{0\}}(X\times I^k\times I,(X\times\partial I^k\times I)\cup(A\times I^k\times\{1\}))\\
&\qquad\qquad\qquad\qquad\quad\to(X\times I^k\times I,(X\times\partial I^k\times I)\cup(A\times I^k\times\{1\})\cup(X\times I^k\times\{0\}))\\
&(X\times I^k\times[{\textstyle\frac 12},1],(X\times\partial I^k\times[{\textstyle\frac 12},1])\cup(A\times I^k\times\{1\}))\\
&\qquad\qquad\to(X\times I^k\times I,(X\times\partial I^k\times I)\cup(X\times I^k\times\{0\})\cup(A\times I^k\times\{1\}))\\
&\qquad\qquad\to(X\times I^k\times I,(X\times\partial I^k\times I)\cup(X\times I^k\times\{0\})\cup(X\times I^k\times[{\textstyle\frac 12},1]))
\end{align*}
and the last pair is homotopy equivalent to $(X\times I^k\times[0,\frac 12],(X\times\partial I^k\times[0,\frac 12])\cup(X\times I^k\times\{0\})\cup(X\times I^k\times\{\frac 12\}))$
\end{proof}

We now define a multiplicative structure on $K$-theory.
There is a natural map $K^0(X)\otimes K^0(Y)\to K^0(X\times Y)$ sending $[E]\otimes[F]$ to $[E\boxtimes F]$.
To define the tensor product map on relative $K$-theory, we will consider the following alternative model based on chain complexes.

\begin{definition}
Let $(X,A)$ be a pair.
Consider bounded complexes of vector bundles on $X$ which are exact over $A$.
Let us call two such complexes homotopic iff they are the restrictions to $\times\{0\}$ and $\times\{1\}$ of a complex on $X\times[0,1]$ acyclic over $A\times[0,1]$ (homotopy is an equivalence relation).
Denote the set of homotopy classes of such complexes by $\Kom(X,A)$, which is an abelian monoid under direct sum, and define
\begin{equation*}
K^0_\kom(X,A):=\Kom(X,A)/\Kom(X,X).
\end{equation*}
This quotient is an abelian group: indeed, the sum of a complex and its shift is homotopic to the mapping cone of the identity map.
\end{definition}

\begin{proposition}
There is a natural isomorphism $K^0(X,A)=K^0_\kom(X,A)$ for paracompact orbispace pairs $(X,A)$.
\end{proposition}

\begin{proof}
There is a natural map $K^0(X,A)\to K^0_\kom(X,A)$ defined as follows.
Given a triple $(E_0,E_1,i)\in\Tri(X,A)$, extend the isomorphism $i$ to a neighborhood of $A$ using Lemma \ref{sectionextension}, and multiply by a cutoff function supported inside this neighborhood to obtain a globally defined map $d:E_0\to E_1$ which is an isomorphism over $A$.
This complex is well defined up to homotopy, so we have defined a map $\Tri(X,A)\to\Kom(X,A)$.
This map evidently sends $\Vect(X)$ to (the image of) $\Kom(X,X)$, so it defines a map $K^0(X,A)\to K^0_\kom(X,A)$ as desired.

We may also define a natural map $K^0_\kom(X,A)\to K^0(X,A)$ as follows.
Given a complex $(E_\bullet,d)$, choose a metric on each $E_i$ using Lemma \ref{paracompactmetric}, and note that the map $d+d^*:E_\mathrm{even}:=\bigoplus_{i\text{ even}}E_i\to\bigoplus_{i\text{ odd}}E_i=:E_\mathrm{odd}$ is an isomorphism wherever $d$ is exact.
Since all metrics are homotopic and $K^0(X,A)$ is homotopy invariant by Proposition \ref{relhtpyinv}, sending $(E_\bullet,d)$ to the triple $(E_\mathrm{even},E_\mathrm{odd},(d+d^*)|_A)$ gives a well defined map $K^0_\kom(X,A)\to K^0(X,A)$.
The composition $K^0(X,A)\to K^0_\kom(X,A)\to K^0(X,A)$ is evidently the identity.

To finish, it suffices to show that $K^0(X,A)\to K^0_\kom(X,A)$ is surjective.
Equivalently, we are to show that every element of $K^0_\kom(X,A)$ is represented by a complex concentrated in degrees $[0\;\;\;1]$.
To do this, we use the following ``folding'' operation.
Given a complex concentrated in degrees $[s\;\;\;r]$, the differential $d_r:E_r\to E_{r-1}$ is injective over $A$, hence over an open neighborhood $U$ of $A$ inside $X$.
If $d_r$ is in fact everywhere injective, then we may choose a metric on $E_{r-1}$ and split it as $\im(E_r)\oplus\im(E_r)^\perp$, showing that the subcomplex $E_r\to\im(E_r)$ is a direct summand, so our complex represents the same class in $K^0_\kom$ as one concentrated in degrees $[s\;\;\;r-1]$.
Now consider the general case where $d_r$ is not assumed everywhere injective.
By replacing our given complex by its direct sum with the mapping cone of the identity map $E_r\to E_r$ concentrated in degrees $[r-2\;\;\;r-1]$, we may assume that there exists an injection $i:E_r\to E_{r-1}$ which agrees with $d_r$ over a neighborhood $U$ of $A$.
Now multiply $d$ by a cutoff function $\varphi$ supported inside $U$ and positive on $A$ (this is homotopic to the original differential), and take the convex interpolation between $\varphi d_r$ and $i$ as a further homotopy.
This ensures that the differential $E_r\to E_{r-1}$ is everywhere injective, so our complex represents the same class in $K^0_\kom$ as one concentrated in degrees $[\min(s,r-2)\;\;\;r-1]$.
The dual operation shows that any complex concentrated in degrees $[s\;\;\;r]$ represents the same class in $K^0_\kom$ as one concentrated in degrees $[s+1\;\;\;\max(s+2,r)]$.
Combining these two operations, we can put anything in degrees $[0\;\;\;1]$.
\end{proof}

Tensor product at the level of $K^0_\kom$ evidently defines commutative and associative maps
\begin{equation*}
K^0(X,A)\otimes K^0(Y,B)\to K^0(X\times Y,(A\times Y)\cup(X\times B))
\end{equation*}
for paracompact orbispace pairs $(X,A)$ and $(Y,B)$ for which $X\times Y$ is paracompact.
These induce, by inspection, associative and graded commutative maps
\begin{equation*}
K^{-n}(X,A)\otimes K^{-m}(Y,B)\to K^{-n-m}(X\times Y,(A\times Y)\cup(X\times B))
\end{equation*}
under the same hypotheses (note that a paracompact space times a compact space is paracompact).

We conclude with a discussion of Bott periodicity.
So far, our discussion has applied equally to complex vector bundles as to real vector bundles, however we now restrict to complex vector bundles.
The \emph{Bott element} $\beta\in K^{-2}(*)=K^0(D^2,\partial D^2)$ is represented by the complex of vector bundles $\underline\CC\xrightarrow{\cdot z}\underline\CC$ on $\CC$ (which contains $D^2$ as the unit disk).

\begin{proposition}
For any finite simplicial complex of groups $X$ with subcomplex $A$, multiplication by the Bott element $K^{-k}(X,A)\to K^{-k-2}(X,A)$ is an isomorphism for all $k\geq 0$.
\end{proposition}

\begin{proof}
By the long exact sequence, excision, the five lemma, and finite additivity, we are reduced to the case of $(D^i,\partial D^i)\times\BB G$.
By the definition of $K^{-n}$, we are further reduced to the case of $K^0$, namely to showing that multiplication by the Bott element
\begin{equation*}
K^0((D^k,\partial D^k)\times\BB G)\to K^0((D^{k+2},\partial D^{k+2})\times\BB G)
\end{equation*}
is an isomorphism for all $k\geq 0$, which is well known \cite{atiyahbott} \cite[Theorem 4.3]{atiyah} \cite[\S 3]{segalequivariant}.
\end{proof}

In fact, the argument above shows more generally that for any complex line bundle $L$ over a finite simplicial complex of groups $X$, pullback followed by multiplication with the relative Bott class in $K^0(L,\infty)$ (represented by the complex $\underline\CC\xrightarrow{\cdot p}L$ on $L$) defines an isomorphism $K^{-k}(X)\to K^{-k}(L,\infty)$ (and, yet more generally, that pullback and multiplication with the Koszul complex defines an isomorphism $K^{-k}(X)\to K^{-k}(E,\infty)$ for any complex vector bundle $E$).

\begin{definition}
The periodic $K$-theory of a pair $(X,A)$ is  the direct limit
\begin{equation*}
K^*_\per(X,A):=\varinjlim_jK^{*-2j}(X,A)
\end{equation*}
over multiplication by $\beta$.
Whereas $K^*$ is defined only in non-positive degrees, $K^*_\per$ is defined in all degrees.
\end{definition}

Since direct limits are exact, $K^*_\per$ is, like $K^*$, a cohomology theory for orbispace pairs satisfying the hypothesis of Theorem \ref{maingeneral}.
When the natural map $K^*\to K^*_\per$ is an isomorphism, $K^*_\per$ provides a natural extension of $K^*$ from non-positive degrees to all degrees.

\begin{corollary}
For any finite simplicial complex of groups $X$ with subcomplex $A$, the natural map $K^*(X,A)\to K^*_\per(X,A)$ is an isomorphism in non-positive degrees.
\qed
\end{corollary}

\bibliographystyle{amsplain}
\bibliography{orbibundle}
\addcontentsline{toc}{section}{References}

\end{document}